\newcommand*{\mailto}[1]{\href{mailto:#1}{\nolinkurl{#1}}}
\newcommand{\arxiv}[1]{\href{http://arxiv.org/abs/#1}{arXiv:#1}}
\newcommand{\bbC}{{\mathbb{C}}}
\newcommand{\bbN}{{\mathbb{N}}}
\newcommand{\bbR}{{\mathbb{R}}}
\renewcommand{\a}{\alpha}
\renewcommand{\b}{\beta}
\newcommand{\g}{\gamma}
\renewcommand{\d}{\delta}
\DeclareMathOperator{\supp}{supp}
\DeclareMathOperator{\dom}{dom}
\renewcommand{\Re}{\text{\rm Re}}
\renewcommand{\ln}{\text{\rm ln}}
\newcommand{\norm}[1]{\lVert#1\rVert}
\newcommand{\no}{\notag}
\newcommand{\lb}{\label}
\newcommand{\f}{\frac}
\newcommand{\ol}{\overline}
\newcommand{\bs}{\backslash}
\newcommand{\wti}{\widetilde}
\newcommand{\Oh}{O}
\newcommand{\oh}{o}
\newcommand{\hatt}{\widehat} 
\newcommand{\dott}{\,\cdot\,}
\renewcommand{\dot}{\overset{\textbf{\Large.}}}
\renewcommand{\ddot}{\overset{\textbf{\Large..}}}
\newcommand{\bi}{\bibitem}
\newcommand{\al}{\alpha}
\newcommand{\be}{\beta}
\newcommand{\ve}{\varepsilon}
\newcommand{\Lr}{{L^2((a,b);rdx)}} 
\newcommand{\ACl}{{AC_{loc}((a,b))}}
\newcommand{\Ll}{{L^1_{loc}((a,b);dx)}}
\def\theequation{\@arabic\c@equation}
\numberwithin{equation}{section}
\newtheorem{theorem}{Theorem}[section]
\newtheorem{lemma}[theorem]{Lemma}
\newtheorem{corollary}[theorem]{Corollary}
\newtheorem{definition}[theorem]{Definition}
\newtheorem{hypothesis}[theorem]{Hypothesis}
\newtheorem{example}[theorem]{Example}
\theoremstyle{remark}
\newtheorem{remark}[theorem]{Remark}
\begin{document}

\title[Domain Properties of Bessel-Type Operators]{On Domain Properties of Bessel-Type Operators}

\author[F.\ Gesztesy]{Fritz Gesztesy}
\address{Department of Mathematics, 
Baylor University, Sid Richardson Bldg., 1410 S.~4th Street, Waco, TX 76706, USA}
\email{\mailto{Fritz\_Gesztesy@baylor.edu}}
\urladdr{\url{http://www.baylor.edu/math/index.php?id=935340}}

\author[M.\ M.\ H.\ Pang]{Michael M.\ H.\ Pang}
\address{Department of Mathematics,
University of Missouri, Columbia, MO 65211, USA}
\email{\mailto{pangm@missouri.edu}}
\urladdr{\url{https://www.math.missouri.edu/people/pang}}

\author[J.\ Stanfill]{Jonathan Stanfill}
\address{Department of Mathematics, 
Baylor University, Sid Richardson Bldg., 1410 S.~4th Street, Waco, TX 76706, USA}
\email{\mailto{Jonathan\_Stanfill@baylor.edu}}
\urladdr{\url{http://sites.baylor.edu/jonathan-stanfill/}}

\date{\today}
\dedicatory{Dedicated, with admiration, to Jerry Goldstein at the happy occasion of his 80th birthday.}
\@namedef{subjclassname@2020}{\textup{2020} Mathematics Subject Classification}
\subjclass[2020]{Primary: 26D10, 34A40, 34B20, 34B30; Secondary: 34L10, 34B24, 47A07.}
\keywords{Hardy-type inequality, strongly singular differential operators, Bessel operators, Friedrichs extension, Krein--von Neumann extension.}

\begin{abstract}
Motivated by a recent study of Bessel operators in connection with a refinement of Hardy's inequality involving $1/\sin^2(x)$ on the finite interval $(0,\pi)$, we now take a closer look at the underlying Bessel-type operators with more general inverse square singularities at the interval endpoints. More precisely, we consider quadratic forms and operator realizations in $L^2((a,b); dx)$ associated with differential expressions of the form 
\[
\omega_{s_a} = - \f{d^2}{dx^2} + \f{s_a^2 - (1/4)}{(x-a)^2}, \quad 
s_a \in \bbR, \; x \in (a,b),   
\]
and 
\begin{align*}
\tau_{s_a,s_b} = - \f{d^2}{dx^2} + \f{s_a^2 - (1/4)}{(x-a)^2} + \f{s_b^2 - (1/4)}{(x-b)^2} + q(x), \quad x \in (a,b),& \\
s_a, s_b \in [0,\infty), \; q \in L^{\infty}((a,b); dx), \; q \text{ real-valued~a.e.~on $(a,b)$,}& 
\end{align*}
where $(a,b) \subset \bbR$ is a bounded interval. 

As an explicit illustration we describe the Krein--von Neumann extension of the minimal operator corresponding to 
$\omega_{s_a}$ and $\tau_{s_a,s_b}$. 
\end{abstract}

\maketitle

{\scriptsize{\tableofcontents}}
\normalsize

\section{Introduction} \lb{s1}

{\it As demonstrated, for instance, in the papers \cite{BG84}, \cite{BG84a}, \cite{GGR11}, \cite{GGR12}, \cite{GZ01}, Jerry has a longstanding interest in singular potentials of $|x|^{-2}$-type. 
Wishing Jerry a Very Happy Birthday, we hope our modest results on Bessel-type operators will cause him some joy.}

In a nutshell, the aim of this paper is to derive domain properties of Bessel-type operators associated with the singular second-order differential expressions of the form 
\begin{align}
\begin{split} 
\tau_{s_a,s_b} = - \f{d^2}{dx^2} + \f{s_a^2 - (1/4)}{(x-a)^2} + \f{s_b^2 - (1/4)}{(x-b)^2} + q(x), \quad x \in (a,b),& \\
s_a, s_b \in [0,\infty), \; q \in L^{\infty}((a,b); dx), \; q \text{ real-valued~a.e.~on $(a,b)$,}&
\end{split}     \lb{1.1} 
\end{align}
where $(a,b) \subset \bbR$ is a bounded interval. 

Our interest in $L^2((a,b); dx)$-realizations of $\tau_{s_a,s_b}$ has its origins in the linear operators (more precisely, the Friedrichs extensions of $\tau_{0,0}\big|_{C_0^{\infty}((a,b))}$) underlying the following Hardy-type inequalities 
\begin{align}
& \int_a^b dx \, |f'(x)|^2 \geq \frac{1}{4} \int_a^b dx \, \frac{|f(x)|^2}{x^2}, \quad f\in H_0^1 ((a,b)),   \lb{1.2} \\
& \int_a^b dx \, |f'(x)|^2 \geq \frac{1}{4} \int_a^b dx \, \frac{|f(x)|^2}{d_{(a,b)}(x)^2},\quad f\in H_0^1 ((a,b)), 
 \lb{1.3} \\
 \begin{split} 
&\int_a^b dx \, |f'(x)|^2 \geq \frac{\pi^2}{4(b-a)^2} \int_a^b dx \, \dfrac{|f(x)|^2}{\sin^2 (\pi(x-a)/(b-a))}    \\
& \hspace*{2.6cm} + \frac{\pi^2}{4(b-a)^2} \int_a^b dx \, |f(x)|^2,\quad f\in H_0^1 ((a,b)),     \lb{1.4} 
\end{split} 
\end{align}
recently studied in \cite{GPS21}.~Here $d_{(a,b)}(x)$ represents the distance from $x \in (a,b)$ to the 
boundary $\{a,b\}$ of the interval $(a,b)$, that is, 
\begin{align} \lb{1.5}
d_{(a,b)}(x)=\begin{cases} x-a, & x\in(a,(b+a)/2],\\
b-x, & x\in[(b+a)/2,b),
\end{cases}
\end{align}
and $H_0^1 ((a,b))$ is the standard Sobolev space on $(a,b)$ obtained upon completion of $C_0^\infty ((a,b))$ in the norm of $H^1 ((a,b))$, that is,
\begin{equation}
H_0^1 ((a,b)) = \big\{g \in L^2((a,b);dx)) \, \big| \, g \in AC([a,b]); \, g(0) =0; \, g'  
\in L^2((a,b);dx)) \big\}. 
\end{equation}

We emphasize that all constants in \eqref{1.2}--\eqref{1.4} are optimal and all inequalities are strict in the sense that equality holds in them if and only if $f \equiv 0$. We also stress that inequality \eqref{1.4} (for $a=0$, $b=\pi$) was first proved by Avkhadiev \cite[Lemma~1]{Av15}, a fact that was unfortunately missed in \cite{GPS21}.

Since $\tau_{s_a,s_b}$ in \eqref{1.1} permits the representation
\begin{align}
\begin{split} 
\tau_{s_a,s_b} &= \alpha_{s_a,s_b}^+  \alpha_{s_a,s_b}^{} + q(x) - \wti q(x), \quad x \in (a,b),      \lb{1.6} \\
& \hspace*{-8mm} 
s_a, s_b \in \bbR, \; q, \wti q \in L^{\infty}((a,b); dx), \; q, \wti q \text{ real-valued~a.e.~on $(a,b)$,}  
\end{split} 
\end{align}
where we abbreviated the singular first-order differential expressions 
\begin{align}
& \alpha_{s_a,s_b} = \f{d}{dx} - \f{s_a + (1/2)}{x-a} \wti \chi_{[a, a+\varepsilon]} 
+ \f{s_b + (1/2)}{b-x} \wti \chi_{[b-\varepsilon,b]},
\no \\
& \alpha_{s_a,s_b}^+ = - \f{d}{dx} - \f{s_a + (1/2)}{x-a} \wti \chi_{[a, a+\varepsilon]} 
+ \f{s_b + (1/2)}{b-x} \wti \chi_{[b-\varepsilon,b]},       \lb{1.7} \\
& \hspace*{5.32cm} s_a, s_b \in \bbR, \; x \in (a,b),     \no 
\end{align}
and for some fixed $\varepsilon \in (0, (b-a)/2)$, introduced the smooth step functions 
\begin{align}
\begin{split} 
& \wti \chi_{[a,a+\varepsilon]}, \wti \chi_{[b-\varepsilon,b]} \in C^{\infty}([a,b]),     \\
& \wti \chi_{[a,a+\varepsilon]}(x) = \begin{cases} 1, & x \in [a,a+\varepsilon], \\ 0, & x \in [a+2\varepsilon,b], \end{cases}  
\quad \wti \chi_{[b-\varepsilon,b]}(x) = \begin{cases} 1, & x \in [b-\varepsilon,b], \\ 0, & x \in [a,b-2\varepsilon], \end{cases}    \lb{1.8} 
\end{split} 
\end{align}
one is naturally led to the (pre)minimal and maximal operators in $L^2((a,b);dx)$ associated with $\alpha_{s_a,s_b}$, 
\begin{align}
& \dot A_{s_a,s_b,min} f = \alpha_{s_a,s_b} f, \quad s_a, s_b \in \bbR,      \no \\
& f \in\dom\big(\dot A_{s_a,s_b,min}\big) = \big\{g \in L^2((a,b); dx) \, \big| \, g \in AC_{loc}((a,b));   \\ 
&\hspace*{3.85cm} \supp(g) \subset (a,b) \text{ compact}; \,  \alpha_{s_a,s_b} g \in L^2((a,b); dx)\big\},   \no \\
& \ddot A_{s_a,s_b,min} = \alpha_{s_a,s_b}\big|_{C_0^{\infty}((a,b))}, \quad s_a, s_b \in \bbR,   \\
& A_{s_a,s_b,min} = \ol{\dot A_{s_a,s_b,min}} = \ol{\alpha_{s_a,s_b}\big|_{C_0^{\infty}((a,b))}}, 
\quad s_a, s_b \in \bbR,    \\
& A_{s_a,s_b,max} f = \alpha_{s_a,s_b} f, \quad s_a, s_b \in \bbR,   \no \\
& f \in\dom(A_{s_a,s_b,max}) = \big\{g \in L^2((a,b); dx) \, \big| \, g \in AC_{loc}((a,b));   \\ 
& \hspace*{5.8cm} \alpha_{s_a,s_b} g \in L^2((a,b); dx)\big\},   \no 
\end{align}
to be studied in Section \ref{s3}. 

Actually, due to locality, the inverse square singularities at $a$ and $b$ decouple and so can be studied separately. Temporarily focusing at the end point $a$, this amounts to studying the singular second-order differential expression
\begin{equation}
\omega_{s_a} = \alpha_{s_a}^+ \alpha_{s_a}^{} = - \f{d^2}{dx^2} + \f{s_a^2 - (1/4)}{(x-a)^2}, \quad 
s_a \in \bbR, \; x \in (a,b),   \lb{1.13} 
\end{equation}
where we abbreviated the singular first-order differential expressions 
\begin{align}
\begin{split} 
& \alpha_{s_a} = \f{d}{dx} - \f{s_a + (1/2)}{x-a} = (x-a)^{s_a + (1/2)} \f{d}{dx} (x-a)^{- s_a - (1/2)}, \\
& \alpha_{s_a}^+ = - \f{d}{dx} - \f{s_a + (1/2)}{x-a} = - \alpha_{-s_a - 1}, \quad 
s_a \in \bbR, \; x \in (a,b).      \lb{1.14}
\end{split} 
\end{align}
Again, this naturally leads one to the (pre)minimal and maximal operators associated with $\alpha_{s_a}$, 
$s_a \in \bbR$,  
\begin{align}
& \dot A_{s_a,min} f = \alpha_{s_a} f, \quad s_a \in \bbR,      \no \\
& f \in\dom\big(\dot A_{s_a,min}\big) = \big\{g \in L^2((a,b); dx) \, \big| \, g \in AC_{loc}((a,b)); \, 
 \alpha_{s_a} g \in L^2((a,b); dx);  \no \\
&\hspace*{7.2cm} \supp(g) \subset (a,b) \text{ compact} \big\},    \lb{1.15}   \\
& \ddot A_{s_a,min} = \alpha_{s_a}\big|_{C_0^{\infty}((a,b))}, \quad s_a \in \bbR,    \\ 
& A_{s_a,min} = \ol{\dot A_{s_a,min}} = \ol{\alpha_{s_a}\big|_{C_0^{\infty}((a,b))}}, \quad s_a \in \bbR,    \\
& A_{s_a,max} f = \alpha_{s_a} f, \quad s_a \in \bbR,         \lb{1.18} \\
& f \in\dom(A_{s_a,max}) = \big\{g \in L^2((a,b); dx) \, \big| \, g \in AC_{loc}((a,b)); \, 
 \alpha_{s_a} g \in L^2((a,b); dx)\big\},   \no 
\end{align}
also to be studied in Section \ref{s3}. 

Interchanging the role of the endpoints $a$ and $b$ then leads to the analogous singular second-order differential expression  
\begin{equation}
\eta_{s_b} = \beta_{s_b}^+ \beta_{s_b}^{} = - \f{d^2}{dx^2} + \f{s_b^2 - (1/4)}{(b-x)^2}, \quad s_b \in \bbR, \; x \in (a,b),   \lb{1.19} 
\end{equation}
abbreviating the singular first-order differential expressions
\begin{align}
\begin{split} 
& \beta_{s_b} = \f{d}{dx} + \f{s_b + (1/2)}{b-x} = (b-x)^{s_b + (1/2)} \f{d}{dx} (b-x)^{- s_b - (1/2)}, \\
& \beta_{s_b}^+ = - \f{d}{dx} + \f{s_b + (1/2)}{b-x} = - \beta_{-s_b-1}, \quad 
s_b \in \bbR, \; x \in (a,b).      \lb{1.20}
\end{split} 
\end{align}
The associated (pre)minimal and maximal operators in $L^2((a,b);dx)$ associated with $\beta_{s_b}$ are then of the form
\begin{align}
& \dot B_{s_b,min} f = \beta_{s_b} f, \quad s_b \in \bbR,      \no \\
& f \in\dom\big(\dot B_{s_b,min}\big) = \big\{g \in L^2((a,b); dx) \, \big| \, g \in AC_{loc}((a,b)); \, 
 \beta_{s_b} g \in L^2((a,b); dx);  \no \\
&\hspace*{6.8cm} \supp(g) \subset (a,b) \text{ compact} \big\},       \\
& \ddot B_{s_b,min} = \beta_{s_b}\big|_{C_0^{\infty}((a,b))}, \quad s_b \in \bbR,   \\
& B_{s_b,min} = \ol{\dot B_{s_b,min}} = \ol{\beta_{s_b}\big|_{C_0^{\infty}((a,b))}}, \quad s_b \in \bbR,    \\
& B_{s_b,max} f = \beta_{s_b} f, \quad s_b \in \bbR,      \\
& f \in\dom(B_{s_b,max}) = \big\{g \in L^2((a,b); dx) \, \big| \, g \in AC_{loc}((a,b)); \, 
 \beta_{s_b} g \in L^2((a,b); dx)\big\},   \no 
\end{align}
and their various properties can then be read off the ones for $\dot A_{s_a,min}$, $\ddot A_{s_a,min}$, 
$A_{s_a,min}$, and $A_{s_a,max}$.

While these singular first-order differential operators are studied in depth in Section \ref{s3}, the Bessel-type differential operators associated with the $L^2((a,b);dx)$-realizations of the second-order differential expressions 
\begin{equation}
\tau_{s_a,s_b} = \alpha_{s_a,s_b}^+  \alpha_{s_a,s_b}^{} + q(\dott) - \wti q(\dott), \quad 
\omega_{s_a} = \alpha_{s_a}^+ \alpha_{s_a}^{}, \quad 
\eta_{s_b} = \beta_{s_b}^+ \beta_{s_b}^{},  
\end{equation} 
are the subject of Section \ref{s4}, where we discuss the minimal and maximal operators $A_{s_a,s_b,min}$, $A_{s_a,s_b,max}$, $A_{s_a,min}$, $A_{s_a,max}$ and then focus on the Friedrichs extensions $A_{s_a,s_b,F}$, $A_{s_a,F}$ and the associated form domains thereof. In Section \ref{s5} we focus on the associated Krein--von Neumann extensions.

Section \ref{s2} recalls the necessary background for the singular Weyl--Titchmarsh--Kodaira theory as needed in the bulk of this paper and Appendix \ref{sA} considers the particular case of the Krein--von Neumann extension in the special case $q=0$ in \eqref{1.6}. Finally, Appendix \ref{sB} summarizes variants of Hardy's inequality as employed in Section \ref{s3}. 

Due to the enormity of the literature on Bessel-type operators, an exhaustive survey of the literature on this subject is an insurmountable task. Hence, we conclude this introduction with a brief discussion of the literature in the immediate vicinity of the circle of ideas discussed in this paper (for additional comments in this direction we refer the reader to \cite{GPS21}). For some background on Bessel operators and their spectral properties we refer, for instance, to 
\cite[p.~544--552]{AG81}, \cite{DW11}, \cite[p.~1532--1538]{DS88}, \cite{EK07}, \cite{GNS21}, 
\cite[p.~142--144]{Na68}, \cite[p.~81--90]{Ti62}, and the literature cited therein. Closest to the results presented in this paper is the work by A.\ Yu.\ Annan'eva and V.\ S.\ Budyka \cite{AA12}, \cite{AB16}, \cite{AB15}, and by L.\ Bruneau, J.\ Derezinski, and V.\ Georgescu \cite{BDG11}, \cite{DG21}, Derezinski and Faupin \cite{DF21}, which focus on Bessel operators on $(0, \infty)$ and $(0,b)$, $b \in (0,\infty)$, with inverse square singularity at $x=0$. The techniques used by these authors are based on elements of harmonic analysis and the explicit knowledge of resolvents for first-order differential operators, respectively, while our methods predominantly rely on weighted Hardy inequalities.

\section{Some Background}\lb{s2}

In this section, following \cite{GLN20}, \cite{GLPS21}, and \cite[Ch.~13]{GZ21}, we summarize the singular 
Weyl--Titchmarsh--Kodaira theory as needed to treat the Bessel operators in the remainder of this paper. 

Throughout this section we make the following assumptions:

\begin{hypothesis} \lb{h2.1}
Let $(a,b) \subseteq \bbR$ and suppose that $p,q,r$ are $($Lebesgue\,$)$ measurable functions on $(a,b)$ 
such that the following items $(i)$--$(iii)$ hold: \\[1mm] 
$(i)$ \hspace*{1.1mm} $r>0$ a.e.~on $(a,b)$, $r\in\Ll$. \\[1mm] 
$(ii)$ \hspace*{.1mm} $p>0$ a.e.~on $(a,b)$, $1/p \in\Ll$. \\[1mm] 
$(iii)$ $q$ is real-valued a.e.~on $(a,b)$, $q\in\Ll$. 
\end{hypothesis}

Given Hypothesis \ref{h2.1}, we study Sturm--Liouville operators associated with the general, 
three-coefficient differential expression
\begin{equation}
\tau=\f{1}{r(x)}\left[-\f{d}{dx}p(x)\f{d}{dx} + q(x)\right] \, \text{ for a.e.~$x\in(a,b) \subseteq \bbR$,} 
   \lb{2.1}
\end{equation} 
and introduce maximal and minimal operators in $L^2((a,b);rdx)$ associated with $\tau$ in the usual manner as follows: 
\begin{align}
&T_{max} f = \tau f,    \no
\\
& f \in \dom(T_{max})=\big\{g\in L^2((a,b);rdx) \, \big| \,g,g^{[1]} \in\ACl;   \lb{2.2} \\ 
& \hspace*{6.35cm}  \tau g\in L^2((a,b);rdx)\big\},   \no
\end{align}
where 
\begin{equation}
y^{[1]}(x) = p(x) y'(x), \quad x \in (a,b),
\end{equation}
denotes the first quasi-derivative of a function $y\in AC_{loc}((a,b))$. 
The {\it preminimal operator} $\dot T_{min} $ in $L^2((a,b);rdx)$ associated with $\tau$ is defined by 
\begin{align}
& \dot T_{min}  f = \tau f,   \no
\\
&f \in \dom \big(\dot T_{min}\big)=\big\{g\in L^2((a,b);rdx) \, \big| \, g,g^{[1]} \in\ACl;   \lb{2.3}
\\
&\hspace*{3.15cm} \supp \, (g)\subset(a,b) \text{ is compact; } \tau g\in L^2((a,b);rdx)\big\}.   \no
\end{align}

One can prove that $\dot T_{min} $ is closable, and one then defines the {\it minimal operator} $T_{min}$ as the 
closure of $\dot T_{min} $. The following fact is well known, 
\begin{equation} 
\big(\dot T_{min}\big)^* = T_{max}, 
\end{equation} 
and hence $T_{max}$ is closed. Moreover, $\dot T_{min} $ is essentially self-adjoint if and only if $T_{max}$ is symmetric, and then $\ol{\dot T_{min} }=T_{min}=T_{max}$.

The celebrated Weyl alternative then can be stated as follows:

\begin{theorem}[Weyl's Alternative] \lb{t2.2} ${}$ \\
Assume Hypothesis \ref{h2.1}. Then the following alternative holds: Either \\[1mm] 
$(i)$ for every $z\in\bbC$, all solutions $u$ of $(\tau-z)u=0$ are in $\Lr$ near $b$ 
$($resp., near $a$$)$, \\[1mm] 
or, \\[1mm] 
$(ii)$ for every $z\in\bbC$, there exists at least one solution $u$ of $(\tau-z)u=0$ which is not in $\Lr$ near $b$ $($resp., near $a$$)$. In this case, for each $z\in\bbC\bs\bbR$, there exists precisely one solution $u_b$ $($resp., $u_a$$)$ of $(\tau-z)u=0$ $($up to constant multiples$)$ which lies in $\Lr$ near $b$ $($resp., near $a$$)$. 
\end{theorem}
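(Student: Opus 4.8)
The plan is to deduce Weyl's alternative from two classical ingredients, followed by a short logical assembly; I argue near the endpoint $b$, the endpoint $a$ being entirely analogous. Throughout I use, under Hypothesis~\ref{h2.1}, the Carath\'eodory existence and uniqueness of the solution of $(\tau-z)u=0$ with prescribed values of $u$ and $u^{[1]}$ at a point $c\in(a,b)$, and the constancy on $(a,b)$ of the Wronskian $W(f,g):=fg^{[1]}-f^{[1]}g$ of two solutions of $(\tau-z)u=0$.

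\emph{Step 1 (invariance in $z$).} I would first show: if for \emph{some} $z_0\in\bbC$ all solutions of $(\tau-z_0)u=0$ lie in $L^2((a,b);r\,dx)$ near $b$, then so do all solutions of $(\tau-z)u=0$ for \emph{every} $z\in\bbC$. Fix $c\in(a,b)$ and a fundamental system $\{u_1,u_2\}$ of $(\tau-z_0)u=0$ with $W(u_1,u_2)\equiv1$; by hypothesis $u_1,u_2\in L^2((c,b);r\,dx)$. Given $z$, variation of parameters represents the solution $u$ of $(\tau-z)u=0$ with prescribed data at $c$ as
\[
u(x)=v(x)+(z-z_0)\int_c^x\big[u_1(t)\,u_2(x)-u_1(x)\,u_2(t)\big]\,u(t)\,r(t)\,dt ,
\]
where $v$ is the solution of $(\tau-z_0)u=0$ with the same data at $c$, so $v\in L^2((c,b);r\,dx)$. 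Squaring, applying the Cauchy--Schwarz inequality, and running a Gronwall-type iteration in $\beta\uparrow b$ (using $u_1,u_2\in L^2((c,b);r\,dx)$) bounds $\|u\|_{L^2((c,b);r\,dx)}$; hence membership of all solutions in $L^2((a,b);r\,dx)$ near $b$ depends on the endpoint alone.

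\emph{Step 2 (an $L^2$ solution near $b$ for $z\in\bbC\setminus\bbR$: Weyl disks).} Fix $z\in\bbC\setminus\bbR$ and $c\in(a,b)$, and let $\theta(\dott,z),\phi(\dott,z)$ solve $(\tau-z)u=0$ with $\theta(c)=\phi^{[1]}(c)=1$ and $\theta^{[1]}(c)=\phi(c)=0$. For $\beta\in(c,b)$, the set $C_\beta(z)$ of those $m\in\bbC$ for which $u_m:=\theta+m\phi$ satisfies a real separated boundary condition at $\beta$, equivalently $W(\ol{u_m},u_m)(\beta)=0$, is a genuine circle in $\bbC$ — here $\Im(z)\ne0$ enters, through $W(\ol{\phi},\phi)(\beta)=-2i\,\Im(z)\int_c^\beta|\phi|^2\,r\,dt\ne0$. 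Feeding $m\in C_\beta(z)$ into the Lagrange identity
\[
(\ol{z}-z)\int_c^\beta|u_m(t)|^2\,r(t)\,dt=W(\ol{u_m},u_m)(\beta)-W(\ol{u_m},u_m)(c)
\]
— the term at $\beta$ vanishes by the definition of $C_\beta(z)$, and the normalization of $\theta,\phi$ gives $W(\ol{u_m},u_m)(c)=2i\,\Im(m)$ — yields
\[
\int_c^\beta|u_m(t,z)|^2\,r(t)\,dt=\frac{\Im(m)}{\Im(z)} .
\]
Hence the closed disk $\ol{D_\beta(z)}$ bounded by $C_\beta(z)$ equals $\big\{m\in\bbC:\int_c^\beta|\theta+m\phi|^2\,r\,dt\le\Im(m)/\Im(z)\big\}$, of radius $\big(2\,|\Im(z)|\int_c^\beta|\phi(t,z)|^2\,r(t)\,dt\big)^{-1}$. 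Since $\beta\mapsto\int_c^\beta|\theta+m\phi|^2\,r\,dt$ is nondecreasing, these disks are nested, $\ol{D_{\beta'}(z)}\subseteq\ol{D_\beta(z)}$ for $c<\beta<\beta'<b$; therefore $\ol{D_b(z)}:=\bigcap_{\beta\in(c,b)}\ol{D_\beta(z)}\ne\varnothing$, and for any $m_b\in\ol{D_b(z)}$ the solution $u_b:=\theta+m_b\phi$ satisfies $\int_c^b|u_b|^2\,r\,dt\le\Im(m_b)/\Im(z)<\infty$, i.e.\ $u_b\in L^2((a,b);r\,dx)$ near $b$. Moreover $\ol{D_b(z)}$ is a nondegenerate disk precisely when $\phi(\dott,z)\in L^2((a,b);r\,dx)$ near $b$ (radius bounded away from $0$), and a single point otherwise.

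\emph{Step 3 (assembling the alternative, and the main obstacle).} If, for our chosen $z\in\bbC\setminus\bbR$, $\ol{D_b(z)}$ is a nondegenerate disk, then $\phi$, $u_b$, hence also $\theta=u_b-m_b\phi$, all lie in $L^2((a,b);r\,dx)$ near $b$, so every solution of $(\tau-z)u=0$ does; by Step~1 the same holds for every $z'\in\bbC$ — this is alternative $(i)$. If instead $\ol{D_b(z)}$ is a single point, then some solution of $(\tau-z)u=0$ is not in $L^2((a,b);r\,dx)$ near $b$ (otherwise $\phi,\theta$, hence every solution, would be, forcing a nondegenerate disk); by the contrapositive of Step~1, for every $z'\in\bbC$ some solution of $(\tau-z')u=0$ fails to lie in $L^2((a,b);r\,dx)$ near $b$, while for $z'\in\bbC\setminus\bbR$ Step~2 supplies at least one solution that does, so exactly one does up to constant multiples — this is alternative $(ii)$. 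The two cases are mutually exclusive, since they make incompatible assertions at, say, $z'=i$. The crux of the argument is Step~2: realizing $C_\beta(z)$ as a circle and pinning down its radius, and, above all, extracting the nesting of the Weyl disks from the Lagrange identity — the one place where $z\notin\bbR$ is genuinely used. Step~1 is a routine Gronwall estimate, Step~3 is elementary logic, and the background ODE facts are standard consequences of Hypothesis~\ref{h2.1}.
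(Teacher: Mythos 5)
The paper does not actually prove Theorem \ref{t2.2}; it is recalled as background from the cited literature (\cite{GLN20}, \cite{GLPS21}, \cite[Ch.~13]{GZ21}), and your argument is precisely the classical proof found there: the invariance-in-$z$ step via variation of parameters and a Gronwall estimate, plus the nested Weyl disks for nonreal $z$, assembled exactly as in the standard treatments. So the proposal is correct and takes essentially the same (standard) approach; the only cosmetic difference is your sign convention in the Lagrange/Green identity, which is immaterial.
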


This naturally yields the limit circle/limit point classification of $\tau$ at an interval endpoint as follows. 

\begin{definition} \lb{d2.3} 
Assume Hypothesis \ref{h2.1}. \\[1mm]  
In case $(i)$ in Theorem \ref{t2.2}, $\tau$ is said to be in the \textit{limit circle case} at $b$ $($resp., at $a$$)$. $($Frequently, $\tau$ is then called \textit{quasi-regular} at $b$ $($resp., $a$$)$.$)$
\\[1mm] 
In case $(ii)$ in Theorem \ref{t2.2}, $\tau$ is said to be in the \textit{limit point case} at $b$ $($resp., at $a$$)$. \\[1mm]
If $\tau$ is in the limit circle case at $a$ and $b$ then $\tau$ is also called \textit{quasi-regular} on $(a,b)$. 
\end{definition}

The next result links self-adjointness of $T_{min}$ (resp., $T_{max}$) and the limit point property of $\tau$ at both endpoints:

\begin{theorem} \lb{t2.4}
Assume Hypothesis~\ref{h2.1}, then the following items $(i)$ and $(ii)$ hold: \\[1mm] 
$(i)$ If $\tau$ is in the limit point case at $a$ $($resp., $b$$)$, then 
\begin{equation} 
W(f,g)(a)=0 \, \text{$($resp., $W(f,g)(b)=0$$)$ for all $f,g\in\dom(T_{max})$.} 
\end{equation} 
$(ii)$ Let $T_{min}=\ol{\dot T_{min} }$. Then
\begin{align}
\begin{split}
n_\pm(T_{min}) &= \dim(\ker(T_{max} \mp i I))    \\
& = \begin{cases}
2 & \text{if $\tau$ is in the limit circle case at $a$ and $b$,}\\
1 & \text{if $\tau$ is in the limit circle case at $a$} \\
& \text{and in the limit point case at $b$, or vice versa,}\\
0 & \text{if $\tau$ is in the limit point case at $a$ and $b$}.
\end{cases}
\end{split} 
\end{align}
In particular, $T_{min} = T_{max}$ is self-adjoint if and only if $\tau$ is in the limit point case at $a$ and $b$. 
\end{theorem}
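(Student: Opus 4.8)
The plan is to deduce both parts from Green's formula, Weyl's alternative (Theorem~\ref{t2.2}), and elementary facts about deficiency indices. The starting point is the integrated Lagrange identity: for $f,g\in\dom(T_{max})$ and $a<\gamma\le\delta<b$,
\[
W(f,\overline g)(\delta)-W(f,\overline g)(\gamma)=\int_\gamma^\delta\big[(\tau f)(x)\,\overline{g(x)}-f(x)\,\overline{(\tau g)(x)}\big]\,r(x)\,dx .
\]
Since $f,g,\tau f,\tau g\in L^2((a,b);rdx)$, the integrand is in $L^1((a,b);rdx)$ by Cauchy--Schwarz, so the one-sided limits $W(f,\overline g)(a):=\lim_{\gamma\downarrow a}W(f,\overline g)(\gamma)$ and $W(f,\overline g)(b):=\lim_{\delta\uparrow b}W(f,\overline g)(\delta)$ exist, and $\langle T_{max}f,g\rangle-\langle f,T_{max}g\rangle=W(f,\overline g)(b)-W(f,\overline g)(a)$. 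Because $p,q,r$ are real-valued, $\overline g\in\dom(T_{max})$ whenever $g\in\dom(T_{max})$, so one may freely replace $W(f,\overline g)$ by $W(f,g)$ below.

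For part $(i)$, assume $\tau$ is limit point at $a$ (the case of $b$ being identical). Since $W(f,g)(a)$ depends only on the restriction of $f,g$ to a left neighborhood of $a$, I localize: fix $c\in(a,b)$ and work on $(a,c)$, where $\tau$ is regular at $c$; write $\dot T_{min}^{(a,c)}$, $T_{min}^{(a,c)}$, $T_{max}^{(a,c)}$ for the associated operators, and recall $\big(\dot T_{min}^{(a,c)}\big)^{*}=T_{max}^{(a,c)}$. Then $n_\pm\big(T_{min}^{(a,c)}\big)=\dim\ker\big(T_{max}^{(a,c)}\mp iI\big)$ equals the number of linearly independent solutions of $(\tau\mp i)u=0$ lying in $L^2((a,c);rdx)$; as every solution is square integrable near the regular endpoint $c$, this is the number square integrable near $a$, namely $1$ by Weyl's alternative in the limit point case. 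Hence $\dim\big(\dom(T_{max}^{(a,c)})/\dom(T_{min}^{(a,c)})\big)=2$. On the other hand, since $c$ is regular the map $f\mapsto(f(c),f^{[1]}(c))$ is a well-defined surjection of $\dom(T_{max}^{(a,c)})$ onto $\bbC^2$ with kernel $D_0:=\{f\in\dom(T_{max}^{(a,c)}):f(c)=f^{[1]}(c)=0\}$, while $\dom(T_{min}^{(a,c)})\subseteq D_0$ because minimal-domain functions vanish together with their first quasi-derivative at a regular endpoint; a dimension count then forces $\dom(T_{min}^{(a,c)})=D_0$. Consequently, for $h\in\dom(T_{min}^{(a,c)})$ and $g\in\dom(T_{max}^{(a,c)})$ the adjoint relation gives $\langle T_{min}^{(a,c)}h,g\rangle-\langle h,T_{max}^{(a,c)}g\rangle=0$, which by Green's formula on $(a,c)$ equals $W(h,\overline g)(c)-W(h,\overline g)(a)=-W(h,\overline g)(a)$ (the term at $c$ vanishing since $h\in D_0$), whence $W(h,\overline g)(a)=0$. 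Finally, given arbitrary $f\in\dom(T_{max})$, pick $\chi\in C^\infty([a,c])$ with $\chi\equiv1$ near $a$ and $\chi\equiv0$ near $c$; then $\chi f\in\dom(T_{min}^{(a,c)})=D_0$ and $\chi f$ agrees with $f$ near $a$, so $W(f,\overline g)(a)=W(\chi f,\overline g)(a)=0$.

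For part $(ii)$, the first equality is immediate from $T_{min}^{*}=\big(\overline{\dot T_{min}}\big)^{*}=\big(\dot T_{min}\big)^{*}=T_{max}$. Next, $\ker(T_{max}-iI)$ consists exactly of the solutions of $(\tau-i)u=0$ lying in $L^2((a,b);rdx)$, i.e.\ square integrable near both endpoints; by Weyl's alternative the space of solutions square integrable near a given endpoint is the full two-dimensional solution space (limit circle case) or one-dimensional (limit point case). Intersecting these subspaces inside the two-dimensional solution space gives dimension $2$ when $\tau$ is limit circle at both endpoints, $1$ when it is limit circle at one and limit point at the other, and at most $1$ otherwise; in the doubly limit point case, a nonzero $u$ square integrable near both endpoints would yield $W(u,\overline u)(b)-W(u,\overline u)(a)=\langle T_{max}u,u\rangle-\langle u,T_{max}u\rangle=(i-\overline i)\|u\|^2_{L^2((a,b);rdx)}=2i\|u\|^2_{L^2((a,b);rdx)}\ne0$, while part $(i)$ forces the left-hand side to be $0$ — a contradiction, so the intersection is $\{0\}$. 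The same computation applies to $\ker(T_{max}+iI)$, which yields the stated values of $n_\pm(T_{min})$. The final assertion follows because $T_{min}\subseteq T_{min}^{*}=T_{max}$ always, so $T_{min}$ is self-adjoint if and only if $T_{min}=T_{max}$, which happens if and only if $n_\pm(T_{min})=0$, and by the table this is precisely the limit point case at both $a$ and $b$.

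I expect the main obstacle to be the identification $\dom(T_{min}^{(a,c)})=D_0$ in the limit point case — this is where the hypothesis actually enters, via pinning the deficiency index to $1$ through Weyl's alternative, together with surjectivity of the boundary-data map at the regular endpoint $c$ and the standard (but slightly delicate) vanishing of minimal-domain functions at $c$. Once this is in hand, parts $(i)$ and $(ii)$ reduce to bookkeeping with Green's formula and elementary linear algebra.
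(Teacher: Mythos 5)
The paper itself does not prove Theorem \ref{t2.4}: it is recalled as classical background (following \cite{GLN20}, \cite{GZ21}), so there is no in-paper proof to compare against. Your argument is the standard decomposition/localization proof of this classical result, and in outline it is correct: existence of the Wronskian limits via the integrated Lagrange identity, reduction to the interval $(a,c)$ with regular endpoint $c$, the count $n_\pm\big(T_{min}^{(a,c)}\big)=1$ from Weyl's alternative, the resulting identification of $\dom\big(T_{min}^{(a,c)}\big)$ with the kernel of the boundary-data map at $c$, and then part $(ii)$ by intersecting the Weyl solution subspaces, with the doubly limit point case killed by $W(u,\overline u)(b)-W(u,\overline u)(a)=2i\|u\|^2$ against part $(i)$. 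The deduction of self-adjointness from $n_\pm=0$ is also fine.

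The one step that genuinely fails as written is the cutoff: under Hypothesis \ref{h2.1} the coefficient $p$ is only measurable with $1/p\in L^1_{loc}$, so for $\chi\in C^\infty$ the quasi-derivative $(\chi f)^{[1]}=\chi f^{[1]}+p\chi' f$ need not be locally absolutely continuous, and hence $\chi f$ need not belong to $\dom\big(T_{max}^{(a,c)}\big)$ at all (choosing instead a cutoff of the form $\psi\big(\int^x dt/p(t)\big)$ repairs the $AC$ issue but produces terms of size $1/(rp^2)$ in $\tau(\chi f)$ that need not be locally square integrable with respect to $r\,dx$). The standard repair is Naimark's patching lemma: there exists $h\in\dom\big(T_{max}^{(a,c)}\big)$ with $h=f$ on a right neighborhood of $a$ and $h(c)=h^{[1]}(c)=0$, obtained by prescribing the four boundary data $\big(f(c_1),f^{[1]}(c_1),0,0\big)$ on an interior interval $[c_1,c_2]$ and gluing; then $h\in D_0=\dom\big(T_{min}^{(a,c)}\big)$ and $W(f,\overline g)(a)=W(h,\overline g)(a)=0$ as you intended. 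Note that the same lemma is also what underwrites the two standard facts you invoke without proof, namely surjectivity of $f\mapsto\big(f(c),f^{[1]}(c)\big)$ on $\dom\big(T_{max}^{(a,c)}\big)$ and the inclusion $\dom\big(T_{min}^{(a,c)}\big)\subseteq D_0$ (the latter also uses graph-norm continuity of the boundary data at the regular endpoint), so it is cleanest to invoke it once and use it throughout. With that substitution your proof is complete and matches the classical argument the paper is implicitly relying on.
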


Here the Wronskian of $f$ and $g$, for $f,g\in\ACl$, is defined by
\begin{equation}
W(f,g)(x) = f(x)g^{[1]}(x) - f^{[1]}(x)g(x), \quad x \in (a,b).    \lb{23.2.3.1} \no \\
\end{equation}

We now make the additional assumption of boundedness from below of $\dot T_{min}$, that is, strengthen Hypothesis \ref{h2.1} as follows: 

\begin{hypothesis} \lb{h2.5} 
In addition to Hypothesis \ref{h2.1} suppose that also the following item $(iv)$ holds: \\[1mm]
$(iv)$ There exists $\lambda_0 \in \bbR$ such that $\dot T_{min} \geq \lambda_0 I_{L^2((a,b);rdx)}$ 
$($equivalently, $T_{min} \geq \lambda_0 I_{L^2((a,b);rdx)}$$)$. 
\end{hypothesis}

Assuming Hypothesis \ref{h2.5} holds from now on, for fixed $c, d \in (a,b)$, $c \leq d$, $\tau u = \lambda u$, 
$\lambda \leq \lambda_0$ has real-valued nonvanishing solutions $u_a(\lambda,\dott) \neq 0$,
$\hatt u_a(\lambda,\dott) \neq 0$ in the neighborhood $(a,c]$ of $a$, and real-valued nonvanishing solutions
$u_b(\lambda,\dott) \neq 0$, $\hatt u_b(\lambda,\dott) \neq 0$ in the neighborhood $[d,b)$ of
$b$, such that 
\begin{align}
&W(\hatt u_a (\lambda,\dott),u_a (\lambda,\dott)) = 1,
\quad u_a (\lambda,x)=\oh(\hatt u_a (\lambda,x))
\text{ as $x\downarrow a$,} \lb{2.8} \\
&W(\hatt u_b (\lambda,\dott),u_b (\lambda,\dott))\, = 1,
\quad u_b (\lambda,x)\,=\oh(\hatt u_b (\lambda,x))
\text{ as $x\uparrow b$,} \lb{2.9} \\
&\int_a^c dx \, p(x)^{-1}u_a(\lambda,x)^{-2}=\int_d^b dx \, 
p(x)^{-1}u_b(\lambda,x)^{-2}=\infty,  \lb{2.10} \\
&\int_a^c dx \, p(x)^{-1}{\hatt u_a(\lambda,x)}^{-2}<\infty, \quad 
\int_d^b dx \, p(x)^{-1}{\hatt u_b(\lambda,x)}^{-2}<\infty. \lb{2.11}
\end{align}
In this case, $u_a(\lambda,\dott)$ (resp., $u_b(\lambda,\dott)$) is called a {\it principal} solution of 
$\tau u=\lambda u$ at $a$ (resp., $b$) and any real-valued solution $\wti{\wti u}_a(\lambda,\dott)$ 
(resp., $\wti{\wti u}_b(\lambda,\dott)$) of $\tau u=\lambda u$ linearly independent of $u_a(\lambda,\dott)$ (resp.,
$u_b(\lambda,\dott)$) is called {\it nonprincipal} at $a$ (resp., $b$). In particular, $\hatt u_a (\lambda,\dott)$ 
(resp., $\hatt u_b(\lambda,\dott)$) in \eqref{2.8}--\eqref{2.11} are nonprincipal solutions at $a$ (resp., $b$).  

One then shows the existence of the following boundary values for all elements $g \in \dom(T_{max})$, 
\begin{align}
\begin{split} 
\wti g(a) &=- W(u_a(\lambda_0,\dott), g)(a)= \lim_{x \downarrow a} \f{g(x)}{\hatt u_a(\lambda_0,x)},    \\
\wti g(b) &=- W(u_b(\lambda_0,\dott), g)(b)= \lim_{x \uparrow b} \f{g(x)}{\hatt u_b(\lambda_0,x)}, 
\lb{2.12} 
\end{split} \\
\begin{split} 
{\wti g}^{\, \prime}(a) &= W(\hatt u_a(\lambda_0,\dott), g)(a)= \lim_{x \downarrow a} \f{g(x) - \wti g(a) \hatt u_a(\lambda_0,x)}{u_a(\lambda_0,x)},    \\ 
{\wti g}^{\, \prime}(b) &= W(\hatt u_b(\lambda_0,\dott), g)(b)= \lim_{x \uparrow b} \f{g(x) - \wti g(b) \hatt u_b(\lambda_0,x)}{u_b(\lambda_0,x)},    \lb{2.13}
\end{split} 
\end{align}
and obtains the following characterization for all self-adjoint extensions of $T_{min}$ (resp., all self-adjoint restrictions of $T_{max}$).

\begin{theorem} \lb{t2.6}
Assume Hypothesis \ref{h2.5} and that $\tau$ is in the limit circle case at $a$ and $b$ $($i.e., $\tau$ is quasi-regular 
on $(a,b)$$)$. Then the following items $(i)$--$(iii)$ hold: \\[1mm] 
$(i)$ All self-adjoint extensions $T_{\al,\be}$ of $T_{min}$ with separated boundary conditions are of the form
\begin{align}
& T_{\al,\be} f = \tau f, \quad \al,\be\in[0,\pi),   \no \\
& f \in \dom(T_{\al,\be})=\big\{g\in\dom(T_{max}) \, \big| \, \wti g(a)\cos(\al) + {\wti g}^{\, \prime}(a)\sin(\al)=0;   \lb{2.14} \\ 
& \hspace*{5.5cm} \, \wti g(b)\cos(\be) + {\wti g}^{\, \prime}(b)\sin(\be) = 0 \big\}.    \no 
\end{align}
$(ii)$ All self-adjoint extensions $T_{\varphi,R}$ of $T_{min}$ with coupled boundary conditions are of the type
\begin{align}
\begin{split} 
& T_{\varphi,R} f = \tau f,    \\
& f \in \dom(T_{\varphi,R})=\bigg\{g\in\dom(T_{max}) \, \bigg| \begin{pmatrix} \wti g(b)\\ {\wti g}^{\, \prime}(b)\end{pmatrix} 
= e^{i\varphi}R \begin{pmatrix}
\wti g(a)\\ {\wti g}^{\, \prime}(a) \end{pmatrix} \bigg\}, \lb{2.15}
\end{split}
\end{align}
where $\varphi\in[0,2\pi)$, and $R$ is a real $2\times2$ matrix with $\det(R)=1$ 
$($i.e., $R \in SL(2,\bbR)$$)$.  \\[1mm] 
$(iii)$ Every self-adjoint extension of $T_{min}$ is either of type $(i)$ $($i.e., separated\,$)$ or of type 
$(ii)$ $($i.e., coupled\,$)$.
\end{theorem}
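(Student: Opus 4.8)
The proof runs along classical lines: one reduces the description of the self-adjoint extensions of $T_{min}$ to a Lagrangian-subspace problem in $\bbC^4$ via Green's formula and the generalized boundary values \eqref{2.12}, \eqref{2.13}, and then performs the (elementary) finite-dimensional classification. Since $\tau$ is quasi-regular on $(a,b)$, Theorem~\ref{t2.4}\,(ii) yields $n_\pm(T_{min}) = 2$; hence $T_{min}$ has self-adjoint extensions, and each such $S$ is a restriction of $T_{max} = (T_{min})^*$ with $\dim(\dom(S)/\dom(T_{min})) = 2$. The first step is the Green-type identity, valid for $f,g \in \dom(T_{max})$,
\[
(T_{max} f, g)_{\Lr} - (f, T_{max} g)_{\Lr} = W(\ol f, g)(b) - W(\ol f, g)(a),
\]
the limiting Wronskians at the endpoints existing precisely by the limit circle hypothesis: near $a$ (resp.\ $b$) every element of $\dom(T_{max})$ is a linear combination of $u_a(\lambda_0,\dott)$ and $\hatt u_a(\lambda_0,\dott)$ (resp.\ $u_b(\lambda_0,\dott)$, $\hatt u_b(\lambda_0,\dott)$), modulo an error that is $\oh$ of the principal solution.

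Next I would rewrite these boundary Wronskians through the boundary values of \eqref{2.12}, \eqref{2.13}. Using $g(x) = \wti g(a)\,\hatt u_a(\lambda_0,x) + {\wti g}^{\,\prime}(a)\, u_a(\lambda_0,x) + \oh(u_a(\lambda_0,x))$ as $x \downarrow a$, the normalization $W(\hatt u_a(\lambda_0,\dott), u_a(\lambda_0,\dott)) = 1$, and the fact that a $\dom(T_{max})$ function whose two boundary values at $a$ both vanish has vanishing Wronskian at $a$ with every element of $\dom(T_{max})$, one obtains
\[
W(\ol f, g)(a) = \ol{\wti f(a)}\,{\wti g}^{\,\prime}(a) - \ol{{\wti f}^{\,\prime}(a)}\,\wti g(a),
\]
and likewise at $b$. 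Introducing the boundary map $\Gamma g = \big(\wti g(a), {\wti g}^{\,\prime}(a), \wti g(b), {\wti g}^{\,\prime}(b)\big) \in \bbC^4$ — which is surjective (construct $\dom(T_{max})$ functions with prescribed boundary data by truncating suitable linear combinations of $u_a(\lambda_0,\dott), \hatt u_a(\lambda_0,\dott), u_b(\lambda_0,\dott), \hatt u_b(\lambda_0,\dott)$) and has $\ker(\Gamma) = \dom(T_{min})$ — the Green-type identity becomes $(T_{max} f, g)_{\Lr} - (f, T_{max} g)_{\Lr} = \Omega(\Gamma f, \Gamma g)$ with $\Omega(u,v) = \ol{u_3} v_4 - \ol{u_4} v_3 - \ol{u_1} v_2 + \ol{u_2} v_1$, a nondegenerate skew-Hermitian form on $\bbC^4$ for which $i\Omega$ is a Hermitian form of signature $(2,2)$. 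By the standard boundary-triple correspondence, the self-adjoint restrictions $S$ of $T_{max}$ are precisely those with $\dom(S) = \Gamma^{-1}(\cD)$ for a maximal $\Omega$-isotropic (equivalently, $\Omega$-self-orthogonal) subspace $\cD \subseteq \bbC^4$, and nondegeneracy of $\Omega$ together with the signature forces $\dim \cD = 2$.

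It then remains to classify the $2$-dimensional $\Omega$-Lagrangian subspaces $\cD$. Writing $\bbC^4 = \bbC^2_a \oplus \bbC^2_b$ for the $a$- and $b$-blocks, $\Omega$ takes the form $\sigma_b \ominus \sigma_a$ with $\sigma(\x,\x') = \ol{\x_1}\x'_2 - \ol{\x_2}\x'_1$ on $\bbC^2$, and I would split according to whether the projection $p_a \colon \cD \to \bbC^2_a$ is injective. If it is not, choose $0 \neq w = (0,0,w_3,w_4) \in \cD$; isotropy of $\cD$ confines $p_b(\cD)$ to the one-dimensional $\sigma$-orthogonal complement of $(w_3,w_4)$, and a dimension count then gives $\cD = \cD_a \oplus \cD_b$ with one-dimensional, respectively $\sigma$-isotropic, summands $\cD_a \subseteq \bbC^2_a$, $\cD_b \subseteq \bbC^2_b$; an isotropic line in $(\bbC^2, \sigma)$ is exactly $\{(\x_1,\x_2) : \x_1\cos(\a) + \x_2\sin(\a) = 0\}$ for a unique $\a \in [0,\pi)$, which produces the separated boundary conditions \eqref{2.14}. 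If $p_a$ is injective it is bijective (equal dimensions), so $\cD = \{(\x, M\x) : \x \in \bbC^2\}$ is the graph of a linear map $M$; here $M$ is invertible (otherwise $\ker(p_b|_\cD) \neq \{0\}$, which returns us to the separated case), and the Lagrangian condition $\sigma(M\x, M\x') = \sigma(\x,\x')$ reads $M^* J M = J$ with $J = \begin{pmatrix} 0 & 1 \\ -1 & 0 \end{pmatrix}$; combining this with the $2 \times 2$ identity $M^\top J M = \det(M)\,J$ gives $\ol M = \det(M)^{-1} M$ and $\abs{\det M} = 1$, whence $M = e^{i\vp} R$ with $R \in SL(2,\bbR)$ and $\vp \in [0,2\pi)$, i.e., the coupled condition \eqref{2.15}. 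As the two cases are exhaustive and mutually exclusive, this also establishes part $(iii)$.

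The main obstacle is not the (routine) linear algebra of the last paragraph but the reduction effected in the first two: proving that the limiting Wronskians exist on all of $\dom(T_{max})$ and are given endpointwise by $\ol{\wti f(\dott)}\,{\wti g}^{\,\prime}(\dott) - \ol{{\wti f}^{\,\prime}(\dott)}\,\wti g(\dott)$ — which requires controlling the $\oh(u_a(\lambda_0,\dott))$ and $\oh(u_b(\lambda_0,\dott))$ error terms inside the Wronskian — and establishing that $\Gamma$ is onto with $\ker(\Gamma) = \dom(T_{min})$. All of this is standard limit circle Sturm--Liouville theory and can be assembled from \cite{GLN20} and \cite[Ch.~13]{GZ21}; alternatively one may avoid boundary triples entirely by invoking von Neumann's formula, parametrizing the self-adjoint extensions by the unitaries in $U(2)$ mapping $\ker(T_{max} - iI)$ onto $\ker(T_{max} + iI)$ and translating the outcome into the forms \eqref{2.14}, \eqref{2.15}.
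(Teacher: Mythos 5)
Your argument is correct and is essentially the standard singular Glazman--Krein--Naimark/Lagrangian-subspace classification: note that the paper itself states Theorem \ref{t2.6} without proof, recalling it from \cite{GLN20} and \cite[Ch.~13]{GZ21}, and those sources proceed exactly as you do -- Green's identity plus the generalized boundary values \eqref{2.12}, \eqref{2.13} reduce the problem to maximal isotropic subspaces of the boundary form on $\bbC^4$, whose classification yields \eqref{2.14} and \eqref{2.15}. You also correctly identify the genuinely analytic ingredients (existence of the limiting Wronskians on $\dom(T_{max})$, surjectivity of the boundary map with kernel $\dom(T_{min})$) and, like the paper, delegate them to the cited references, while your finite-dimensional case analysis (separated summands versus the graph of $M=e^{i\varphi}R$, $R\in SL(2,\bbR)$) is carried out correctly and completely.
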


\begin{remark} \lb{r2.7}
If $\tau$ is in the limit point case at one endpoint, say, at the endpoint $b$, one omits the corresponding boundary condition involving $\beta \in [0, \pi)$ at $b$ in \eqref{2.14} to obtain all self-adjoint extensions $T_{\alpha}$ of 
$T_{min}$, indexed by $\alpha \in [0, \pi)$. (In this case item $(ii)$ in Theorem \ref{t2.6} is vacuous.) In the case where $\tau$ is in the limit point case at both endpoints, all boundary values and boundary conditions become superfluous as in this case $T_{min} = T_{max}$ is self-adjoint. 
\hfill $\diamond$
\end{remark} 

As a special case we recall that $T_{min}$ is now characterized by 
\begin{align}
\begin{split} 
& T_{min} f = \tau f,    \\
& f \in \dom(T_{min})= \big\{g\in\dom(T_{max})  \, \big| \, \wti g(a) = {\wti g}^{\, \prime}(a) = \wti g(b) = {\wti g}^{\, \prime}(b) = 0\big\}.    \lb{2.16}
\end{split} 
\end{align}
Similarly, the Friedrichs extension $T_F$ of $T_{min}$ now permits the particularly simple characterization in terms of the generalized boundary values $\wti g(a), \wti g(b)$ as derived by Niessen and Zettl \cite{NZ92}(see also \cite{GP79}, \cite{Ka72}, \cite{Ka78}, \cite{KKZ86}, \cite{MZ00}, \cite{Re51}, \cite{Ro85}, \cite{YSZ15}), 
\begin{align}
T_F f = \tau f, \quad f \in \dom(T_F)= \big\{g\in\dom(T_{max})  \, \big| \, \wti g(a) = \wti g(b) = 0\big\}.    \lb{2.17}
\end{align}
Finally, we also mention the special case of the Krein--von Neumann extension in the case where 
$T_{min}$ is strictly positive, a result recently derived in \cite{FGKLNS21} (see also \cite{GLNPS21}).

\begin{theorem} \lb{t2.8}
In addition to Hypothesis \ref{h2.1}, suppose that $T_{min} \geq \varepsilon I$ for some 
$\varepsilon > 0$. Then the following items $(i)$ and $(ii)$ hold: \\[1mm]
$(i)$ Assume that $n_{\pm}(T_{min}) = 1$ and denote the principal solutions of $\tau u = 0$ at $a$ and $b$ by $u_a(0,\dott)$ and $u_b(0,\dott)$, respectively. If $\tau$ is in the limit circle case at $a$ and in the limit point case at $b$, then the Krein--von Neumann extension $T_{\a_K}$ of $T_{min}$ is given by 
\begin{align}
& T_{\a_K}f = \tau f,   \no \\
& f \in \dom(T_{\a_K})=\big\{g\in\dom(T_{max}) \, \big| \, \sin(\a_K) {\wti g}^{\, \prime}(a) 
+ \cos(\a_K) \wti g(a) = 0\big\},     \lb{2.18} \\
& \cot(\a_K) = - \wti u_b^{\, \prime}(0,a) / \wti u_b(0,a), \quad \a_K \in (0,\pi).   \no 
\end{align}
Similarly, if $\tau$ is in the limit circle case at $b$ and in the limit point case at $a$, then the Krein--von Neumann extension $T_{\b_K}$ of $T_{min}$ is given by 
\begin{align}
& T_{\b_K}f = \tau f,   \no \\
& f \in \dom(T_{\b_K})=\big\{g\in\dom(T_{max}) \, \big| \, \sin(\b_K) {\wti g}^{\, \prime}(b) 
+ \cos(\b_K) \wti g(b) = 0\big\},     \lb{2.19} \\
& \cot(\b_K) = - \wti u_a^{\, \prime}(0,b) / \wti u_a(0,b), \quad \b_K \in (0,\pi).   \no 
\end{align}
$(ii)$ Assume that $n_{\pm}(T_{min}) = 2$, that is, $\tau$ is in the limit circle case at $a$ and $b$. Then
the Krein--von Neumann extension $T_{0,R_K}$ of $T_{min}$ is given by 
\begin{align}
\begin{split} 
& T_{0,R_K} f = \tau f,    \\
& f \in \dom(T_{0,R_K})=\bigg\{g\in\dom(T_{max}) \, \bigg| \begin{pmatrix} \wti g(b) 
\\ {\wti g}^{\, \prime}(b) \end{pmatrix} = R_K \begin{pmatrix}
\wti g(a) \\ {\wti g}^{\, \prime}(a) \end{pmatrix} \bigg\},      \lb{2.21}
\end{split}
\end{align}
where 
\begin{align}
R_K=\begin{pmatrix} \wti{\hatt u}_{a}(0,b) & \wti u_{a}(0,b)\\
\wti{\hatt u}_{a}^{\, \prime}(0,b) & \wti u_{a}^{\, \prime}(0,b)
\end{pmatrix}.  \lb{2.22}
\end{align}
\end{theorem}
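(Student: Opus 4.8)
The plan is to reduce the statement to the abstract description of the Krein--von Neumann extension of a strictly positive operator and then unwind the resulting domain through the generalized boundary value formalism recalled in Section~\ref{s2}. Concretely, I would first invoke the abstract fact (this is the content of \cite{FGKLNS21}; see also \cite{GLNPS21}) that for a densely defined, closed, symmetric operator $S$ with $S \ge \varepsilon I$, $\varepsilon > 0$, the Krein--von Neumann extension $S_K$ is given by $\dom(S_K) = \dom(S) \dotplus \ker(S^*)$ and $S_K(f+k) = Sf$ for $f \in \dom(S)$, $k \in \ker(S^*)$, the sum being direct since $f \in \dom(S) \cap \ker(S^*)$ forces $\varepsilon \norm{f}^2 \le \langle Sf, f\rangle = 0$. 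Taking $S = T_{min}$ and $S^* = T_{max}$, the problem then splits into three tasks: (a) identify $\ker(T_{max})$, that is, the $L^2((a,b);rdx)$-solutions of $\tau u = 0$; (b) compute the generalized boundary values \eqref{2.12}--\eqref{2.13} (with $\lambda_0 = 0$, which is admissible since $T_{min} \ge \varepsilon I \ge 0$) of a convenient basis of $\ker(T_{max})$; and (c) combine this with the description \eqref{2.16} of $\dom(T_{min})$ to read off which boundary conditions in Theorem~\ref{t2.6} cut out $\dom(T_{min}) \dotplus \ker(T_{max})$.

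For part $(i)$, Theorem~\ref{t2.4} gives $\dim \ker(T_{max}) = 1$, and in the limit circle at $a$/limit point at $b$ case I would argue that the spanning element is the principal solution $u_b(0,\dott)$ at $b$: it lies in $L^2$ near $b$ because strict positivity makes $\tau$ nonoscillatory at $b$ (so the square-integrable solution is the principal one; for the inverse-square singularities at hand this can also be seen from the explicit near-endpoint asymptotics), and it lies in $L^2$ near $a$ since $\tau$ is limit circle there. Since $\tau$ is limit circle at $a$, $u_b(0,\dott)$ has generalized boundary values $\wti u_b(0,a), \wti u_b^{\,\prime}(0,a)$, and I would show $\wti u_b(0,a) \ne 0$ — otherwise $u_b(0,\dott) \in \dom(T_F)$ by \eqref{2.17} would be a null eigenfunction of $T_F$, contradicting $T_F \ge T_{min} \ge \varepsilon I$ — so $\cot(\a_K) := -\wti u_b^{\,\prime}(0,a)/\wti u_b(0,a)$ is finite and determines a unique $\a_K \in (0,\pi)$, the excluded value $\a_K = 0$ corresponding exactly to the case $\wti u_b(0,a) = 0$. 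Finally, writing a general $h \in \dom(T_K)$ as $h = f + c\,u_b(0,\dott)$ with $\wti f(a) = \wti f^{\,\prime}(a) = 0$ by \eqref{2.16}, the pair $(\wti h(a), \wti h^{\,\prime}(a))$ is the scalar multiple $c\,(\wti u_b(0,a), \wti u_b^{\,\prime}(0,a))$, which is precisely the condition $\sin(\a_K)\wti h^{\,\prime}(a) + \cos(\a_K)\wti h(a) = 0$ of \eqref{2.18}; conversely, subtracting from any $h$ in that domain the multiple $c = \wti h(a)/\wti u_b(0,a)$ of $u_b(0,\dott)$ annihilates both boundary values at $a$ and produces an element of $\dom(T_{min})$. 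The limit circle at $b$/limit point at $a$ case is symmetric and yields \eqref{2.19}.

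For part $(ii)$, all solutions of $\tau u = 0$ are now globally square-integrable, so $\ker(T_{max}) = \linspan\{\hatt u_a(0,\dott), u_a(0,\dott)\}$. Using \eqref{2.8} and \eqref{2.12}--\eqref{2.13} at $a$ with the normalization $W(\hatt u_a(0,\dott), u_a(0,\dott))(a) = 1$, a short computation gives $\wti u_a(0,a) = 0$, $\wti u_a^{\,\prime}(0,a) = 1$, $\wti{\hatt u}_a(0,a) = 1$, $\wti{\hatt u}_a^{\,\prime}(0,a) = 0$; hence for $h = f + c_1 \hatt u_a(0,\dott) + c_2 u_a(0,\dott)$ with $f \in \dom(T_{min})$ one gets $\wti h(a) = c_1$, $\wti h^{\,\prime}(a) = c_2$, and therefore $\big(\wti h(b), \wti h^{\,\prime}(b)\big)^{\top} = R_K \big(\wti h(a), \wti h^{\,\prime}(a)\big)^{\top}$ with $R_K$ exactly as in \eqref{2.22}. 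I would then verify that $R_K$ is real (the solutions are real-valued) with $\det(R_K) = W(\hatt u_a(0,\dott), u_a(0,\dott))(b) = W(\hatt u_a(0,\dott), u_a(0,\dott))(a) = 1$, using constancy of the Wronskian of solutions together with the identity $W(g_1,g_2)(b) = \wti g_1(b)\,\wti g_2^{\,\prime}(b) - \wti g_1^{\,\prime}(b)\,\wti g_2(b)$ for $g_1, g_2 \in \dom(T_{max})$ (which follows from \eqref{2.12}--\eqref{2.13} and the normalization \eqref{2.9}). Thus $R_K \in SL(2,\bbR)$ and, by Theorem~\ref{t2.6}$(ii)$, the operator with this domain is $T_{0,R_K}$; surjectivity onto the claimed domain is handled as in part $(i)$, by subtracting the unique element of $\ker(T_{max})$ matching the boundary data at $a$.

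The main obstacle I anticipate is not the algebra but the structural point in step (a) of part $(i)$: carefully justifying that in the limit point case at $b$ the one-dimensional $\ker(T_{max})$ is spanned by the \emph{principal} solution (rather than merely by \emph{a} square-integrable solution), and, more broadly, matching the non-orthogonal direct sum $\dom(T_{min}) \dotplus \ker(T_{max})$ bijectively with the boundary-condition description of Theorem~\ref{t2.6}. In particular, pinning down that $\a_K, \b_K$ lie in the \emph{open} interval $(0,\pi)$ is exactly the place where the strict positivity $T_{min} \ge \varepsilon I$ — as opposed to mere nonnegativity — is essential, and this is the delicate point to get right.
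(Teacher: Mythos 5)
Your argument is correct, but note that the paper itself contains no proof of Theorem \ref{t2.8}: it is recalled as background and attributed to \cite{FGKLNS21} (see also \cite{GLNPS21}). The route you take --- the decomposition $\dom(T_K)=\dom(T_{min})\dotplus\ker(T_{max})$ valid for $T_{min}\ge\varepsilon I$ with $\varepsilon>0$, identification of $\ker(T_{max})$ with the zero-energy $L^2$-solutions, and translation of the resulting domain into the generalized boundary values \eqref{2.12}--\eqref{2.13} --- is precisely the derivation carried out in that reference, so this is essentially the same approach as the paper's source, and the points you single out as delicate (that in the limit point case the $L^2$-solution near $b$ must be the principal one, since a nonprincipal $L^2$-solution would force the dominated principal solution into $L^2$ as well, and that $\wti u_b(0,a)\neq 0$, hence $\a_K\in(0,\pi)$, follows from $T_F\ge\varepsilon I$) are resolved correctly by your arguments.
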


\medskip

In the remainder of this section we specialize the discussion to Bessel-type operators associated with 
differential expressions of the form 
\begin{align}
\begin{split} 
\tau_{s_a,s_b} = - \f{d^2}{dx^2} + \f{s_a^2 - (1/4)}{(x-a)^2} + \f{s_b^2 - (1/4)}{(x-b)^2} + q(x), \quad x \in (a,b),& \\
s_a, s_b \in [0,\infty), \; q \in L^{\infty}((a,b); dx), \; q \text{ real-valued~a.e.~on $(a,b)$,}&     \lb{2.23} 
\end{split}
\end{align}
where $(a,b) \subset \bbR$ is a bounded interval. The associated minimal and maximal operators are then denoted by 
$T_{s_a,s_b,min}$ and $T_{s_a,s_b,max}$ and the associated self-adjoint extensions of $T_{s_a,s_b,min}$, in obvious notation, are then abbreviated by $T_{s_a,s_b,\al,\be}$ and $T_{s_a,s_b,\varphi,R}$, respectively. 

As is well-known, $\tau_{s_a,s_b}$ is in the limit circle case at $a$ (resp., $b$) if and only if 
$s_a^2 \in [0,1)$ (resp., $s_b^2 \in [0,1)$) and in the limit point case at $a$ (resp., $b$) if and only if $s_a^2 \in [1,\infty)$ 
(resp., $s_b^2 \in [1,\infty)$). In addition, $\tau_{s_a,s_b}$ is nonoscillatory at $a$ (resp., $b$) if and only if 
$s_a^2 \in [0,\infty)$ (resp., $s_b^2 \in [0,\infty)$). For simplicity, we will always assume $s_a, s_b \in [0,\infty)$. 

Viewing $q$ as a bounded perturbation (which does not influence operator domains) one can focus on the case 
$q = 0$ and hence obtains (for $c,d \in (a,b)$ with $c$ sufficiently close to $a$ and $d$ sufficiently close to $b$), 
\begin{align}
u_{a,s_a,1/2}(0, x; q = 0) &= (x-a)^{(1/2) + s_a}, \; s_a \in [0,1), \; x \in (a,c),      \lb{2.24} \\
\hatt u_{a,s_a,1/2}(0, x; q = 0) &= \begin{cases} (2 s_a)^{-1} (x-a)^{(1/2) - s_a}, & s_a \in (0,1), \\
(x-a)^{1/2} \ln(1/(x-a)), & s_a =0, \end{cases} \quad x \in (a,c),     \lb{2.25} \\
u_{b,1/2,s_b}(0, x; q = 0) &= - (b-x)^{(1/2) + s_b}, \; s_b \in [0,1), \; x \in (d,b),    \lb{2.26} \\
\hatt u_{b,1/2,s_b}(0, x; q = 0) &= \begin{cases} (2 s_b)^{-1} (b-x)^{(1/2) - s_b}, & s_b \in (0,1), \\
(b-x)^{1/2} \ln(1/(b-x)), & s_b =0, \end{cases} \quad x \in (d,b).     \lb{2.27} 
\end{align}

Making the transition from $q = 0$ to $q \in L^{\infty}((a,b);dx)$, the principal and nonprincipal solutions for 
$\lambda = 0$ then have the same leading behavior near $x =a, b$ as in \eqref{2.24}--\eqref{2.27}, with easy control over the remainder terms, so that 
for $g \in \dom(T_{s_a,s_b,max})$, the Bessel operator boundary values then become 
\begin{align}
\begin{split} 
\wti g(a) &= - W(u_{a,s_a,1/2}(0, \dott), g)(a)   \\
&= \begin{cases} \lim_{x \downarrow a} g(x)\big/\big[(2 s_a)^{-1}(x-a)^{(1/2) - s_a}\big], & s_a \in (0,1), \\[1mm]
\lim_{x \downarrow a} g(x)\big/\big[(x-a)^{1/2} \ln(1/(x-a))\big], & s_a =0, 
\end{cases}    \lb{2.28} 
\end{split} \\
\begin{split} 
\wti g^{\, \prime} (a) &= W(\hatt u_{a,s_a,1/2}(0, \dott), g)(a)   \\
&= \begin{cases} \lim_{x \downarrow a} \big[g(x) - \wti g(a) (2 s_a)^{-1} (x-a)^{(1/2) - s_a}\big]\big/(x-a)^{(1/2) + s_a}, 
& s_a \in (0,1), \\[1mm]
\lim_{x \downarrow a} \big[g(x) - \wti g(a) (x-a)^{1/2} \ln(1/(x-a))\big]\big/(x-a)^{1/2}, & s_a =0, \\
\end{cases}
\end{split} \lb{2.29} \\
\begin{split} 
\wti g(b) &= - W(u_{b,1/2,s_b}(0, \dott), g)(b)   \\
&= \begin{cases} \lim_{x \uparrow b} g(x)\big/\big[(2 s_b)^{-1}(b-x)^{(1/2) - s_b}\big], & s_b \in (0,1), \\[1mm]
\lim_{x \uparrow b} g(x)\big/\big[(b-x)^{1/2} \ln(1/(b-x))\big], & s_b =0, 
\end{cases} 
\end{split} \lb{2.30} \\
\begin{split} 
\wti g^{\, \prime} (b) &= W(\hatt u_{b,1/2,s_b}(0, \dott), g)(b)   \\
&= \begin{cases} \lim_{x \uparrow b} \big[g(x) - \wti g(b) 
(2 s_b)^{-1} (b-x)^{(1/2) - s_b}\big]\big/\big[-(b-x)^{(1/2) + s_b}\big],\\
\hfill s_b \in (0,1), \\[1mm]
\lim_{x \uparrow b} \big[g(x) - \wti g(b) (b-x)^{1/2} \ln(1/(b-x))\big]\big/ \big[-(b-x)^{1/2}\big], \quad s_b =0.
\end{cases}    \lb{2.31} 
\end{split} 
\end{align}
Since $\tau_{s_a,s_b}$ is in the limit point case at $a$ (resp., $b$) if and only if $s_a \in [1,\infty)$ (resp., 
$s_b \in [1,\infty)$), there are no boundary values at $x=a$ (resp., $x=b$) unless $s_a \in [0,1)$ 
(resp., $s_b \in [0,1)$). 

All self-adjoint extensions of $T_{s_a,s_b,min,}$, $s_a, s_b \in [0,\infty)$, are then described as in 
Theorem \ref{t2.6} and Remark \ref{r2.7}, with special cases like the Friedrichs and Krein--von Neumann extension described as in \eqref{2.17} and Theorem \ref{t2.8}.

\section{Domain Properties of First-Order Singular Operators} \lb{s3}

In this section we develop our principal results on singular first-order differential operators associated with the differential expressions $\alpha_{s_a}$ (resp., $\beta_{s_b}$) and $\tau_{s_a,s_b}$.

Introducing the differential expressions $\alpha_{s_a}$, $\alpha^+_{s_a}$ by
\begin{align}
\begin{split} 
& \alpha_{s_a} = \f{d}{dx} - \f{s_a + (1/2)}{x-a} = (x-a)^{s_a + (1/2)} \f{d}{dx} (x-a)^{- s_a - (1/2)}, \\
& \alpha_{s_a}^+ = - \f{d}{dx} - \f{s_a + (1/2)}{x-a} = - \alpha_{-s_a - 1}, \quad 
s_a \in \bbR, \; x \in (a,b),      \lb{3.1}
\end{split} 
\end{align}
one confirms that 
\begin{equation}
\omega_{s_a} = \alpha_{s_a}^+ \alpha_{s_a}^{} = - \f{d^2}{dx^2} + \f{s_a^2 - (1/4)}{(x-a)^2}, \quad 
s_a \in \bbR, \; x \in (a,b).   \lb{3.2} 
\end{equation}

Occasionally, we will permit $s_a \in \bbC$. 

Next we introduce the preminimal and maximal operators associated with the differential expression 
$\alpha_{s_a}$, $s_a \in \bbR$ as follows:  
\begin{align}
& \dot A_{s_a,min} f = \alpha_{s_a} f, \quad s_a \in \bbR,      \no \\
& f \in\dom\big(\dot A_{s_a,min}\big) = \big\{g \in L^2((a,b); dx) \, \big| \, g \in AC_{loc}((a,b)); \, 
 \alpha_{s_a} g \in L^2((a,b); dx);  \no \\
&\hspace*{7.2cm} \supp(g) \subset (a,b) \text{ compact} \big\},    \lb{3.3}   \\
& A_{s_a,max} f = \alpha_{s_a} f, \quad s_a \in \bbR,         \lb{3.4} \\
& f \in\dom(A_{s_a,max}) = \big\{g \in L^2((a,b); dx) \, \big| \, g \in AC_{loc}((a,b)); \, 
 \alpha_{s_a} g \in L^2((a,b); dx)\big\}.   \no 
\end{align}
Since $(x-a)^{-1}$ is an $L^2_{loc}((a,b);dx)$ coefficient, one could have introduced, without loss of generality, the preminimal operator defined on the domain $C_0^{\infty}((a,b))$, that is, 
\begin{equation}
 \ddot A_{s_a,min} = \alpha_{s_a}\big|_{C_0^{\infty}((a,b))}, \quad s_a \in \bbR. 
\end{equation}

\begin{lemma} \lb{l3.1} 
Let $s_a \in \bbR$, then 
\begin{equation}
\big(\dot A_{s_a,min}\big)^* = - A_{-s_a - 1,max}, \quad A_{-s_a - 1,max}^* = - \ol{\dot A_{s_a,min}}.   \lb{3.6}
\end{equation}
In particular, $A_{s_a,max}$ is closed in $L^2((a,b); dx)$ and hence $ \dot A_{s_a,min}$ is closable in $L^2((a,b); dx)$ for all $s_a \in \bbR$.
\end{lemma}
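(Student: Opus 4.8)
The plan is to compute $\big(\dot A_{s_a,min}\big)^*$ directly from the definition of the adjoint and then extract the remaining assertions by taking adjoints once more and by exploiting the substitution $s_a \leftrightarrow -s_a-1$. First I would record the preliminaries: for $g\in C_0^{\infty}((a,b))$ the coefficient $(x-a)^{-1}$ is bounded on $\supp(g)$, so $\alpha_{s_a} g,\alpha_{-s_a-1} g \in L^2((a,b);dx)$; hence $C_0^{\infty}((a,b))\subseteq \dom\big(\dot A_{s_a,min}\big)$ and $C_0^{\infty}((a,b))\subseteq \dom(A_{-s_a-1,max})$, both operators are densely defined (so the adjoints in question are legitimate), and $\dot A_{s_a,min}\subseteq A_{s_a,max}$. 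I would also keep in mind from \eqref{3.1} that $\alpha_{s_a}^+ = -\alpha_{-s_a-1}$.

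For the inclusion $-A_{-s_a-1,max}\subseteq\big(\dot A_{s_a,min}\big)^*$, take $g\in\dom\big(\dot A_{s_a,min}\big)$ and $h\in\dom(A_{-s_a-1,max})$. Since $g,h\in AC_{loc}((a,b))$ and $g$ has compact support in $(a,b)$, integration by parts over a compact subinterval whose interior contains $\supp(g)$ has vanishing boundary contributions; using that $(s_a+(1/2))/(x-a)$ is real-valued one obtains $\langle \alpha_{s_a} g, h\rangle_{L^2} = \langle g, \alpha_{s_a}^+ h\rangle_{L^2} = \langle g, -\alpha_{-s_a-1} h\rangle_{L^2}$, which is precisely the statement that $h\in\dom\big(\big(\dot A_{s_a,min}\big)^*\big)$ with $\big(\dot A_{s_a,min}\big)^* h = -\alpha_{-s_a-1} h$. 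For the reverse inclusion, let $h\in\dom\big(\big(\dot A_{s_a,min}\big)^*\big)$ and put $k:=\big(\dot A_{s_a,min}\big)^* h\in L^2((a,b);dx)$; testing the defining identity against all $g\in C_0^{\infty}((a,b))$ yields the distributional equation $-h' - [(s_a+(1/2))/(x-a)]h = k$ on $(a,b)$. Since $(x-a)^{-1}$ is locally bounded on $(a,b)$ and $h\in L^2((a,b);dx)\subseteq L^1_{loc}((a,b);dx)$, the term $[(s_a+(1/2))/(x-a)]h$ is in $L^1_{loc}((a,b);dx)$, and so is $k$; hence $h' = -k - [(s_a+(1/2))/(x-a)]h \in L^1_{loc}((a,b);dx)$, which forces $h\in AC_{loc}((a,b))$. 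Then $\alpha_{-s_a-1} h = h' + [(s_a+(1/2))/(x-a)]h = -k\in L^2((a,b);dx)$, so $h\in\dom(A_{-s_a-1,max})$ and $\big(\dot A_{s_a,min}\big)^* h = -A_{-s_a-1,max} h$. Combining the two inclusions gives the first identity in \eqref{3.6}.

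Finally, since $\big(\dot A_{s_a,min}\big)^* = -A_{-s_a-1,max}$ is densely defined, $\dot A_{s_a,min}$ is closable and $\ol{\dot A_{s_a,min}} = \big(\big(\dot A_{s_a,min}\big)^*\big)^* = \big(-A_{-s_a-1,max}\big)^* = -A_{-s_a-1,max}^*$, which is the second identity in \eqref{3.6}. Replacing $s_a$ by $-s_a-1$ in the first identity (an involution on $\bbR$, so all cases are covered) gives $\big(\dot A_{-s_a-1,min}\big)^* = -A_{s_a,max}$; being an adjoint, this operator is closed, hence $A_{s_a,max}$ is closed for all $s_a\in\bbR$, and then $\dot A_{s_a,min}\subseteq A_{s_a,max}$ re-confirms closability. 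The only step requiring genuine care is the regularity argument deducing $h\in AC_{loc}((a,b))$ from the distributional ODE, but since the singular coefficient is locally bounded away from $a$ and $b$ this is entirely routine; the remainder is bookkeeping with the identity $\alpha_{s_a}^+ = -\alpha_{-s_a-1}$ and with signs.
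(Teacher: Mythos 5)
Your argument is correct and follows essentially the same route as the paper: integration by parts yields $-A_{-s_a-1,max}\subseteq\big(\dot A_{s_a,min}\big)^*$, and the converse inclusion is a regularity argument for the first-order equation — where you invoke the standard fact that an $L^1_{loc}$ distributional derivative forces local absolute continuity, the paper carries out exactly this step by hand via an explicit antiderivative $H_{s_a}$ and the du Bois--Reymond (constancy) lemma. Your concluding bookkeeping (closability since the adjoint $-A_{-s_a-1,max}$ is densely defined, $\ol{\dot A_{s_a,min}}=\big(\big(\dot A_{s_a,min}\big)^*\big)^*$, and the involution $s_a\mapsto-s_a-1$ giving closedness of $A_{s_a,max}$ for all $s_a\in\bbR$) reproduces the paper's conclusion with only a cosmetic reordering.
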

\begin{proof}
An integration by parts assuming $f \in \dom\big(\dot A_{s_a,min}\big)$, $g \in \dom(A_{-s_a - 1,max})$ immediately proves that 
\begin{equation}
\big(g, \dot A_{s_a,min} f\big)_{L^2((a,b);dx)} = - (A_{-s_a - 1,max} g,f)_{L^2((a,b);dx)},
\end{equation}
and hence $\big(\dot A_{s_a,min}\big)^* \supseteq - A_{-s_a - 1,max}$, $s_a \in \bbR$. To prove the converse inclusion one follows the usual strategy: Suppose $g \in \dom\big(\big(\dot A_{s_a,min}\big)^*\big)$, introduce  
$h_{s_a} = (\dot A_{s_a,min}\big)^* g$, and let $f \in \dom\big(\dot A_{s_a,min}\big)$, $s_a \in \bbR$. Then
\begin{align}
\begin{split} 
& \big(\big(\dot A_{s_a,min}\big)^* g,f\big)_{L^2((a,b);dx)} = \int_a^b dx \, \ol{h_{s_a}(x)} f(x) = 
\big(g, \dot A_{s_a,min} f\big)_{L^2((a,b);dx)}      \\
& \quad = \int_a^b dx \, \big\{\ol{g(x)} f'(x) - [s_a +(1/2)] x^{-1} \ol{g(x)} f(x)\big\},
\end{split} 
\end{align}
that is, introducing
\begin{equation}
H_{s_a}(x) = \int_x^c dt \, \big\{h_{s_a}(t) + [s_a +(1/2)] t^{-1} g(t)\big\}, \quad s_a \in \bbR, \; x \in (a,b],   
\end{equation}
for some $c \in (a,b)$, one obtains $H_{s_a} \in AC_{loc}((a,b))$ and 
\begin{align}
\begin{split} 
& \int_a^b dx \, \ol{g(x)} f'(x) = \int_a^b  dx \, \big\{\ol{h_{s_a}(x)} + [s_a +(1/2)] x^{-1} \ol{g(x)}\big\} f(x)  \\
& \quad = - \int_a^b dx \, \ol{H_{s_a}'(x)} f(x) = \int_a^b dx \, \ol{H_{s_a}(x)} f'(x), 
\end{split}
\end{align}
and hence
\begin{equation}
\int_a^b  dx \, \ol{[g(x) - H_{s_a}(x)]} f'(x) = 0, \quad f \in \dom\big(\dot A_{s_a,min}\big).   \lb{3.11} 
\end{equation}
In particular, \eqref{3.11} holds for all $f \in C_0^{\infty}((a,b))$ and thus,
\begin{equation}
[g(x) - H_{s_a}(x)] = c
\end{equation}
for some $c \in \bbC$. Therefore, $g \in AC_{loc}((a,b))$ and 
\begin{equation}
0 = g'(x) - H_{s_a}'(x) = g'(x) + h_{s_a}(x) + [s_a+(1/2)] x^{-1} g(x), \quad x \in (a,b), 
\end{equation}
implying 
\begin{equation}
h_{s_a} = - \alpha_{-s_a-1} g \in L^2((a,b);dx),
\end{equation}
and hence $g \in \dom(A_{-s_a - 1,max})$ and $\big(\dot A_{s_a,min}\big)^* \subseteq - A_{-s_a - 1,max}$ yields that  
$\big(\dot A_{s_a,min}\big)^* = - A_{-s_a - 1,max}$, $s_a \in \bbR$. In particular, $A_{s_a,max}$ is closed for all 
$s_a \in \bbR$ and thus $\dot A_{s_a,min} \subset A_{s_a,max}$ implies that  $\dot A_{s_a,min}$ is closable for all 
$s_a \in \bbR$. Taking adjoints in the first relation of \eqref{3.6} then yields the second relation in \eqref{3.6}.  
\end{proof}

One then naturally defines the minimal operator associated with $\alpha_{s_a}$ via the closure of 
$\dot A_{s_a,min}$, 
\begin{equation}
A_{s_a,min} = \ol{\dot A_{s_a,min}} = \ol{\alpha_{s_a}\big|_{C_0^{\infty}((a,b))}}, \quad s_a \in \bbR, 
\end{equation}
and obtains
\begin{equation}
A_{s_a,min}^* = - A_{-s_a - 1,max}, \quad A_{-s_a - 1,max}^* = - A_{s_a,min}, \quad s_a \in \bbR.   \lb{3.16}
\end{equation}

The principal result regarding $A_{s_a,min}$ and $A_{s_a,max}$ then reads as follows.

\begin{theorem} \lb{t3.2}
Let $s_a \in \bbR$. \\[1mm]
$(i)$ For all $s_a \in \bbR \backslash \{0\}$, 
\begin{equation}
\dom(A_{s_a,min}) = H_0^1((a,b)).    \lb{3.17}
\end{equation}
$(ii)$ For $s_a = 0$, $C_0^{\infty}((a,b))$, and hence $H^1_0((a,b))$, is a core for $A_{0,min}$, in addition, 
$H_0^1((a,b)) \subsetneqq \dom(A_{0,min})$. Moreover,  
\begin{align}
\dom(A_{0,min}) &= \big\{f \in L^2((a,b); dx) \, \big| \, f \in AC_{loc}((a,b)); \, f(a) = 0 = f(b);   \no \\
& \hspace*{5.8cm} \alpha_0 f \in L^2((a,b); dx)\big\},    \lb{3.18} \\
\dom(A_{0,max}) &= \big\{f \in L^2((a,b); dx) \, \big| \, f \in AC_{loc}((a,b)); \, \alpha_0 f \in L^2((a,b); dx)\big\}   \no \\
&= \big\{f \in L^2((a,b); dx) \, \big| \, f \in AC_{loc}((a,b)); \, f(a) = 0;     \lb{3.19} \\
& \hspace*{4.7cm}  \alpha_0 f \in L^2((a,b); dx)\big\}.   \no 
\end{align} 
In fact, the boundary condition $f(a)=0$ in \eqref{3.18} and \eqref{3.19} can be replaced by 
\begin{equation} 
\lim_{x \downarrow a} \f{f(x)}{\big[(x-a) \ln(R/(x-a))]^{1/2}} = 0,     \lb{3.20}
\end{equation}
and the boundary condition $f(b)=0$ in \eqref{3.18} can be replaced by 
\begin{equation}
\lim_{x \uparrow b} \f{f(x)}{(b-x)^{1/2}}= 0.      \lb{3.21}
\end{equation} 
$(iii)$ For $s_a \in (-\infty, -1] \cup (0,\infty)$ one has, 
\begin{align}
\dom(A_{s_a,max}) &= \big\{f \in L^2((a,b); dx) \, \big| \, f \in AC_{loc}((a,b)); \, 
\alpha_{s_a} f \in L^2((a,b); dx)\big\}    \no \\
& = \big\{f \in L^2((a,b); dx) \, \big| \, f \in AC_{loc}((a,b)); \, f(a) = 0;    \lb{3.22} \\
& \hspace*{4.6cm} \alpha_{s_a} f \in L^2((a,b); dx)\big\}.   \no
\end{align}
The boundary condition $f(a)=0$ in \eqref{3.22} can be replaced by 
\begin{equation}
\lim_{x \downarrow a} \f{f(x)}{(x-a)^{1/2}} = 0.      \lb{3.23}
\end{equation}
$(iv)$ For $s_a \in (-1,0)$, 
\begin{equation}
H_0^1((a,b)) = \dom(A_{s_a,min}) \subsetneqq \dom(A_{s_a,max}).     \lb{3.24} 
\end{equation}
\end{theorem}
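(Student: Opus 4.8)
The plan is to treat the four items separately, the two workhorses being the factorization $\alpha_{s_a}=(x-a)^{s_a+(1/2)}\f{d}{dx}(x-a)^{-s_a-(1/2)}$ from \eqref{3.1} and the weighted Hardy-type inequalities gathered in Appendix \ref{sB}. For item $(i)$ I would show that on $C_0^{\infty}((a,b))$ the graph norm of $\alpha_{s_a}$ and the $H^1$-norm are equivalent; since $A_{s_a,min}$ is by definition the completion of $C_0^{\infty}((a,b))$ in the former and $H_0^1((a,b))$ the completion in the latter, this gives \eqref{3.17}, and a routine passage to the limit (again using Hardy to control $(x-a)^{-1}f$) shows that the closure acts as $f\mapsto f'-[s_a+(1/2)](x-a)^{-1}f$. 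The norm equivalence comes from integration by parts, which yields for $f\in C_0^{\infty}((a,b))$
\[
\|\alpha_{s_a}f\|_{L^2((a,b);dx)}^2=\big(f,\omega_{s_a}f\big)_{L^2((a,b);dx)}=\|f'\|_{L^2((a,b);dx)}^2+\big(s_a^2-(1/4)\big)\big\|(x-a)^{-1}f\big\|_{L^2((a,b);dx)}^2 .
\]
If $s_a^2\ge1/4$ the last term is $\ge0$, so $\|\alpha_{s_a}f\|\ge\|f'\|$; if $0<s_a^2<1/4$, inserting the \emph{sharp} Hardy inequality $\|(x-a)^{-1}f\|^2\le4\|f'\|^2$ gives $\|\alpha_{s_a}f\|^2\ge4s_a^2\|f'\|^2$, which is a genuine lower bound precisely because $s_a\neq0$; the reverse estimate $\|\alpha_{s_a}f\|\le(1+2|s_a+(1/2)|)\|f'\|$ follows from the same inequality.

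For the maximal domains in items $(ii)$ and $(iii)$, I would put $\psi:=\alpha_{s_a}g\in L^2((a,b);dx)$ for $g\in\dom(A_{s_a,max})$; by the factorization $(x-a)^{-s_a-(1/2)}g$ is locally absolutely continuous with derivative $(x-a)^{-s_a-(1/2)}\psi$, so near $a$ one has $g(x)=(x-a)^{s_a+(1/2)}\big[C+\int_c^x(t-a)^{-s_a-(1/2)}\psi(t)\,dt\big]$ for some $c\in(a,b)$ and $C\in\bbC$. When $s_a\in(-\infty,-1]$ the homogeneous term $(x-a)^{s_a+(1/2)}$ is not square integrable near $a$, which forces $C=\int_a^c(t-a)^{-s_a-(1/2)}\psi(t)\,dt$ (an absolutely convergent integral), hence $g(x)=(x-a)^{s_a+(1/2)}\int_a^x(t-a)^{-s_a-(1/2)}\psi(t)\,dt$; when $s_a\in(0,\infty)$ the homogeneous term lies in $L^2$ but decays like $o\big((x-a)^{1/2}\big)$; when $s_a=0$ it behaves like $(x-a)^{1/2}$, which is the weight appearing in \eqref{3.20}. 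In every case a weighted Cauchy--Schwarz estimate on the integral term (together with a routine shrinking of the integration window, and a logarithmic factor inserted when $s_a=0$), using $\|\psi\|_{L^2((a,x);dx)}\to0$ as $x\downarrow a$, shows $\lim_{x\downarrow a}(x-a)^{-1/2}g(x)=0$ (resp.\ $\lim_{x\downarrow a}[(x-a)\ln(R/(x-a))]^{-1/2}g(x)=0$ when $s_a=0$) and hence $\lim_{x\downarrow a}g(x)=0$; this yields \eqref{3.22}, \eqref{3.23}, \eqref{3.19}, \eqref{3.20}. Near the regular endpoint $b$ the coefficient $(x-a)^{-1}$ is bounded, so $\alpha_{s_a}g\in L^2$ near $b$ is equivalent to $g\in H^1$ near $b$ and imposes no restriction on $g(b)$, which is why the maximal operators carry no boundary condition at $b$.

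For the minimal domain in item $(ii)$, by \eqref{3.16} one has $A_{0,min}=(A_{0,min}^*)^*=(-A_{-1,max})^*$, so $g\in\dom(A_{0,min})$ if and only if $g\in\dom(A_{0,max})$ and the boundary form $\lim_{x\uparrow b}\ol{h(x)}g(x)-\lim_{x\downarrow a}\ol{h(x)}g(x)$ produced by integrating $\alpha_0$ against $\alpha_0^+=-\alpha_{-1}$ by parts vanishes for every $h\in\dom(A_{-1,max})$. Running the ODE analysis above for $\alpha_{-1}=(x-a)^{-1/2}\f{d}{dx}(x-a)^{1/2}$ shows every such $h$ satisfies $h(x)=o\big((x-a)^{1/2}\big)$ near $a$, while every $g\in\dom(A_{0,max})$ satisfies $g(x)=O\big((x-a)^{1/2}[\ln(1/(x-a))]^{1/2}\big)$ near $a$; hence the endpoint $a$ contributes nothing, the form reduces to $\ol{h(b)}g(b)$, and since $h(b)$ ranges over all of $\bbC$ this forces $g(b)=0$, giving \eqref{3.18}; the replacement \eqref{3.21} is the elementary equivalence $g(b)=0\Longleftrightarrow\lim_{x\uparrow b}(b-x)^{-1/2}g(x)=0$ for $g\in H^1$ near $b$. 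That $C_0^{\infty}((a,b))$ is a core for $A_{0,min}$ holds by definition, hence so is $H_0^1((a,b))$ (which lies in $\dom(A_{0,min})$ by Hardy and the approximation of item $(i)$), and $H_0^1((a,b))\subsetneqq\dom(A_{0,min})$ because a function equal to $(x-a)^{1/2}$ near $a$ and cut off smoothly to vanish near $b$ lies in $\dom(A_{0,min})$ (its $\alpha_0$-image vanishes near $a$) but not in $H^1((a,b))$. Finally, item $(iv)$: $\dom(A_{s_a,min})=H_0^1((a,b))$ is the case $s_a\in(-1,0)$ of item $(i)$, and for the strict inclusion take $\chi\in C_0^{\infty}((a,b))$ with $\chi\equiv1$ near $a$ and set $g(x)=\chi(x)(x-a)^{s_a+(1/2)}$; then $(x-a)^{-s_a-(1/2)}g=\chi$, so $\alpha_{s_a}g=(x-a)^{s_a+(1/2)}\chi'\in L^2((a,b);dx)$ and $g\in L^2((a,b);dx)$ since $2s_a+1>-1$, i.e.\ $g\in\dom(A_{s_a,max})$, yet $g\notin H_0^1((a,b))$ because $g'=[s_a+(1/2)](x-a)^{s_a-(1/2)}\notin L^2$ near $a$ when $s_a\neq-1/2$ (as $2s_a-1<-1$), while for $s_a=-1/2$ one has $g=\chi$ with $g(a)=\chi(a)=1\neq0$.

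The step I expect to be the main obstacle is the endpoint analysis underlying items $(ii)$ and $(iii)$: not the leading-order behavior of elements of the maximal domains near $a$, which is read off the variation-of-parameters formula, but the verification that the relevant \emph{generalized} boundary value vanishes automatically. This needs the logarithmically weighted Hardy/Cauchy--Schwarz estimates in their precise form, and the borderline value $s_a=0$ --- where the correct normalizing weight is $[(x-a)\ln(R/(x-a))]^{1/2}$ rather than $(x-a)^{1/2}$, and where the Hardy inequality of item $(i)$ is used at exactly its optimal constant --- is the genuinely delicate case that dictates the shape of the whole theorem.
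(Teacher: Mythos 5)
Your argument is sound in substance and in several places takes a genuinely different route from the paper's. For item $(i)$ you prove on $C_0^\infty((a,b))$ the exact identity $\|\alpha_{s_a}f\|_{L^2((a,b);dx)}^2=\|f'\|_{L^2((a,b);dx)}^2+\big(s_a^2-\tfrac14\big)\big\|(x-a)^{-1}f\big\|_{L^2((a,b);dx)}^2$ and combine it with the sharp Hardy inequality to obtain two-sided equivalence of the $\alpha_{s_a}$-graph norm with the $H^1$-norm for every $s_a\neq 0$; this treats all of $\bbR\backslash\{0\}$ at once, whereas the paper splits into $s_a\in(0,\infty)$ (via Lemma \ref{lB.1}), $s_a\in(-1,0)$ (relative boundedness with bound $<1$), and $s_a\in(-\infty,-1]$ (a direct estimate as in Lemma \ref{lB.2}). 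Your treatment of the maximal domains by variation of parameters plus weighted Cauchy--Schwarz, with the shrinking-window refinement and the logarithmic weight at $s_a=0$, is essentially the paper's alternative route of Remark \ref{r3.3} and Lemma \ref{l3.4}, upgraded to the $o$-estimates \eqref{3.20}, \eqref{3.23}, and your derivation of \eqref{3.18} from $A_{-1,max}^*=-A_{0,min}$ via vanishing of the boundary form at $a$ (using $h(x)=o\big((x-a)^{1/2}\big)$ against $g(x)=O\big((x-a)^{1/2}[\ln(1/(x-a))]^{1/2}\big)$) is self-contained, where the paper instead quotes \cite{BDG11} for the half-line and argues by locality. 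The witnesses for the strict inclusions in items $(ii)$ and $(iv)$ are the same as the paper's; note only that ``$\chi\in C_0^\infty((a,b))$ with $\chi\equiv 1$ near $a$'' is literally impossible --- you mean a cutoff such as $\wti\chi_{[a,a+\varepsilon]}\in C^\infty([a,b])$ from \eqref{1.8}.

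The one genuine gap is the sentence asserting that $C_0^\infty((a,b))$ is a core for $A_{0,min}$ ``by definition.'' There are two a priori different minimal operators, $\ol{\dot A_{0,min}}$ (closure from compactly supported $AC_{loc}$ elements, which is the operator entering \eqref{3.16} and hence your adjoint/boundary-form argument) and $\ol{\alpha_0\big|_{C_0^\infty((a,b))}}$, and the core statement in item $(ii)$ is precisely the nontrivial claim that these closures coincide; the paper proves it by mollification, convolving compactly supported elements of $\dom\big(\dot A_{0,min}\big)$, whose supports stay away from the singular endpoint, so that the $(x-a)^{-1}$ coefficient plays no role. As written, your appeal to ``definition'' is circular relative to what the theorem asserts. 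The repair is cheap within your own framework: either reproduce the mollification step, or observe that the adjoint computation of Lemma \ref{l3.1} only tests against $C_0^\infty((a,b))$ functions, so $\big(\ddot A_{0,min}\big)^*=-A_{-1,max}=\big(\dot A_{0,min}\big)^*$ and therefore $\ol{\ddot A_{0,min}}=\ol{\dot A_{0,min}}$. The same identification is tacitly used in item $(i)$, though there it is immediate since compactly supported elements of $\dom(A_{s_a,max})$ lie in $H_0^1((a,b))$ and are graph-norm approximable by $C_0^\infty((a,b))$ functions by your norm equivalence.
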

\begin{proof}
Employing locality of $A_{s_a,min}, A_{s_a,max}$ and the fact that $(x-a)^{-1}$ is bounded on $[a+\varepsilon, b]$ for all 
$0 < \varepsilon < (b-a)$, all assertions in a neighborhood of $x=b$ follow immediately from the special and well-known case $s_a = - 1/2$ as $[s_a +(1/2)] (x-a)^{-1} \chi_{[a+\varepsilon,b]}$ represents a bounded operator of multiplication in $L^2((a,b); dx) $. In particular, $f \in \dom(A_{-1/2,min})$ behaves in a neighborhood of $x=b$ like an 
$H^1_0((a,b))$-element and hence $\lim_{x \uparrow b} f(x)\big/(b-x)^{1/2} = 0$ holds (see \eqref{B.10b}) implying 
\eqref{3.21}. Similarly, $f \in \dom(A_{-1/2,max})$ behaves in a neighborhood of $x=b$ like an $H^1((a,b))$-element. 
Thus, it suffices to focus entirely on the singular left endpoint $x=a$ in the remainder of this proof. \\[1mm] 
\noindent 
$(1)$ $s_a \in (0,\infty)$: Then the relations in \eqref{B.6} prove \eqref{3.17} and \eqref{3.22} in this case. \\[1mm] 
\noindent 
$(2)$ $s_a \in (-1,0)$: In this case the Hardy inequality \eqref{B.11} shows that the operator of multiplication 
$\pm [s_a + (1/2)]/x$ is bounded relatively to $\alpha_{-1/2}\big|_{H^1_0((a,b))}$ with bound strictly less than one, proving 
\eqref{3.17} in the case $s_a \in (-1,0)$. \\[1mm] 
$(3)$ $s_a \in (-\infty,-1]$: In this case we mimic the proof of Lemma \ref{lB.2} and obtain  
\begin{align}
& \big|(x-a)^{-s_a -(1/2)} f(x) - (c-a)^{-s_a - (1/2)} f(c)\big| = \bigg|\int_c^x dt \, \big[(t-a)^{-s_a - (1/2)} f(t)\big]' \bigg|    \no \\
& \quad = \bigg|\int_c^x dt \, (t-a)^{-s_a - (1/2)} (\alpha_{s_a} f)(t) \bigg|    \no \\
& \quad \leq 
\bigg(\int_c^x dt \, (t-a)^{-2s_a - 1}\bigg)^{1/2} \|\alpha_{s_a} f\|_{L^2((c,x);dt)}    \no \\
& \quad = (-2s_a)^{-1/2} \big[(x-a)^{-2s_a} - (c-a)^{-2s_a}\big]^{1/2}  \|\alpha_{s_a} f\|_{L^2((c,x);dt)},    \lb{3.25} \\
& \hspace*{4.35cm} c\in(a,b), \; x \in [c,b), \; s_a \in (-\infty, -1].    \no
\end{align}
Since by hypothesis $\alpha_{s_a} f \in L^2((a,b); dt)$ and $s_a < 0$, one concludes from lines two and three in \eqref{3.25} 
the existence of the limit
\begin{equation}
F_{s_a} := \lim_{c \downarrow a} (c-a)^{-s_a - (1/2)} f(c), \quad s_a \in (-\infty,-1],     \lb{3.26} 
\end{equation}
and hence obtains
\begin{align}
\begin{split} 
& \bigg|\f{f(x)}{(x-a)^{1/2}} - F_{s_a} (x-a)^{s_a}\bigg| \leq (-2s_a)^{1/2} \|\alpha_{s_a} f\|_{L^2((a,x);dt)}   \\
& \quad \leq (-2s_a)^{1/2} \|\alpha_{s_a} f\|_{L^2((a,b);dt)}, \quad x \in (a,b), \; s_a \in (-\infty, -1].    \lb{3.27} 
\end{split}
\end{align}
Since $f \in L^2((a,b);dt)$, that is, $\int_a^b dt \, (t-a)^{-1} \big|(t-a)^{1/2} f(t)\big|^2 < \infty$ implies 
\begin{equation} 
\liminf_{x \downarrow a} (x-a)^{1/2} |f(x)| =0, 
\end{equation} 
the existence of $F_{s_a}$ in \eqref{3.26} yields 
\begin{equation}
F_{s_a} = 0, \quad s_a \in (-\infty, -1]. 
\end{equation}
Hence, by the first line in \eqref{3.27},  
\begin{equation}
\lim_{x \downarrow a} \f{f(x)}{(x-a)^{1/2}} = 0, \quad f \in \dom(A_{s_a,max}), \; s_a \in (-\infty, -1].     \lb{3.30} 
\end{equation}
This proves \eqref{3.22}, and \eqref{3.23} also for the case $s_a \in (-\infty,-1]$. 

Next, \eqref{B.1} implies $\int_a^b dx \, (x-a)^{-2} |f(x)|^2 < \infty$, and hence $\alpha_{s_a} f \in L^2((a,b);dx)$ 
and \eqref{B.4} also yields $f' \in L^2((a,b);dx)$, and therefore $f \in H_0^1((a,b))$. This proves \eqref{3.17} also 
for the case $s_a \in (-\infty,-1]$ and hence completes the proof of items $(i)$ and $(iii)$. \\[1mm]
\noindent 
$(4)$ $s_a=0$: First one notes that \eqref{B.5} applies and hence 
\begin{equation}
f \in \dom(A_{0,max}) \, \text{ implies } \, f(a)= 0,  
\end{equation}
in fact, it even implies $\lim_{x \downarrow a} |f(x)|\big/[(x-a) \ln(R/(x-a))\big]^{1/2} = 0$ and hence \eqref{3.20}. By 
\cite[Proposition~3.1\,$(i)$,\,$(iii)$]{BDG11}, it is known that that in the case $b=\infty$, one has 
\begin{align}
& \dom(A_{0,min,b=\infty}) = \dom(A_{0,max,b=\infty})      \no \\
& \quad = \big\{f \in L^2((a,\infty);dx) \big| f \in AC_{loc}((a,\infty)); \, f(a) = 0; \, \alpha_0 f \in L^2((a,\infty);dx)\big\}  \no \\
& \quad  = \big\{f \in L^2((a,\infty);dx) \big| f \in AC_{loc}((a,\infty)); \, \, \alpha_0 f \in L^2((a,\infty);dx)\big\}.  \lb{3.32}
\end{align}
By locality of $\alpha_0$, the local properties of elements in \eqref{3.32} and in $\dom(A_{0,min})$ and 
$\dom(A_{0,max})$ in \eqref{3.18} and \eqref{3.19}, respectively, coincide on any interval $(a,b-\varepsilon)$, 
$0 < \varepsilon < (b-a)$. The point $b$ is of course rather different for $\dom(A_{0,min})$ and $\dom(A_{0,max})$ in the sense that 
\begin{equation}
f \in \dom(A_{0,min}) \, \text{ necessitates the Dirichlet boundary condition} \, f(b) = 0,
\end{equation}
whereas $g \in \dom(A_{0,max})$ enforces no boundary condition at all at $x=b$. This is clear from the fact that 
$(x-a)^{-1} \chi_{[a+\varepsilon,b]}(x)$, $x \in (a,b)$, $0 < \varepsilon < (b-a)$, generates a bounded operator of multiplication in $L^2((a,b); dx)$ and clearly,
\begin{align}
\dom(A_{-1/2,min}) = H^1_0((a,b)), \quad  \dom(A_{-1/2,max}) = H^1((a,b)).
\end{align} 
In particular, there necessarily is a Dirichlet boundary condition for each element in $\dom(A_{-1/2,min})$, and hence 
in $\dom(A_{0,min})$, at $x=b$, but no boundary condition for elements in $\dom(A_{-1/2,max})$, and hence in 
$\dom(A_{0,max})$, at $x=b$.  
Thus, \eqref{3.18} and the second equality in \eqref{3.19} holds (the first equality in \eqref{3.19} holds by \eqref{3.4}). 

To prove that $C_0^{\infty}((a,b))$ (and hence $H^1_0((a,b))$) is a core for $A_{0,min}$, it only remains to show that for all $f\in\dom(\dot A_{0,min})$, there exists a sequence $\{f_n\}_{n\in\bbN}\subseteq C_0^{\infty}((a,b))$ such that 
$\norm{f_n-f}_{L^2((a,b);dx)}\rightarrow 0$ and $\norm{\alpha_0 f_n-\alpha_0 f}_{L^2((a,b);dx)}\rightarrow 0$ as $n\rightarrow\infty$. In this context, let $h:\bbR\rightarrow [0,\infty)$ be a $C^\infty$-function satisfying
\begin{align}
\begin{cases}
h \text{ is even on } \bbR, \\
h(x)\geq0,\quad x\in\bbR,\\
\supp(h)\subseteq(-1,1),  \\[1mm] 
\int_{-1}^1 dx \, h(x)=1, \\[1mm] 
h \text{ is non-increasing on } [0,\infty).
\end{cases}
\end{align}
In addition, for $n\in\bbN$, let $h_n(x)=n h(nx)$ and $f_n=f*h_n\in C^\infty(\bbR)$.
Letting $\supp(f)\subseteq[a',b']$ where $a<a'<b'<b$, for $n\in\bbN$ sufficiently large, that is, for
\begin{equation}
1/n<(1/2)\min(a'-a,b-b'),   \lb{3.36}
\end{equation}
one has
\begin{equation}
\supp(f_n)\subseteq([a+a']/2,[b+b']/2)\subset (a,b).
\end{equation}
Therefore $f_n\in C_0^{\infty}(([a+a']/2,[b+b']/2))$ for all $n\in\bbN$ satisfying \eqref{3.36}. Thus, the singularity of $\alpha_0$ at $x=a$ has no significance on $f$ or $f_n,\;  n\in\bbN$, satisfying \eqref{3.36}. By the standard theory of convolution (see, e.g., \cite[Sects.~7.2,~7.3]{GT01}), $f_n \in C^{\infty}_0((a, b)),$
$f_n\rightarrow f$, and $f_n'=h_n*f'\rightarrow f'$ as $n\rightarrow\infty$ in $L^2(([a+a']/2,[b+b']/2);dx)$. Hence, $\norm{f_n-f}_{L^2((a,b);dx)}\rightarrow0$ and $\norm{\alpha_0 f_n-\alpha_0 f}_{L^2((a,b);dx)}\rightarrow0$ as $n\rightarrow\infty$, showing $C_0^\infty((a,b))$ is a core for $A_{0,min}$.

Since 
\begin{equation} 
f_0(x) = (x-a)^{1/2} \wti \chi_{[a,a+\varepsilon]}(x), \quad x \in (a,b), \; \varepsilon \in (0,(b-a)/2),
\end{equation} 
satisfies $f_0 \in \dom(A_{0,min})$, but $f_0 \notin H_0^1((a,b))$, this completes the proof of item $(ii)$. \\[1mm] 
\noindent 
$(5)$ Finally, considering 
\begin{equation} 
f_{s_a}(x) = (x-a)^{s_a + (1/2)} \wti \chi_{[a,a+\varepsilon]}(x), \quad s_a \in (-1,0), \; x \in (a,b), 
\; \varepsilon \in (0,(b-a)/2),
\end{equation} 
one readily verifies that 
\begin{equation}  
f_{s_a} \in \dom(A_{s_a,max}), \quad f_{s_a} \notin H_0^1((a,b)) = \dom(A_{s_a,min}), \quad s_a \in (-1,0),
\end{equation}
proving item $(iv)$.
\end{proof}

We emphasize that more can and has been proven in this context in \cite{BDG11} (see also \cite{AA12}, \cite{AB16}, \cite{AB15}, \cite{DF21}, \cite{DG21}, \cite{KT11}) using quite different methods, not involving Hardy-type inequalities such as \eqref{B.1}. In particular, the case of complex-valued $s_a$ is considered in \cite{BDG11}, \cite{DF21}, \cite{DG21}. 

\begin{remark} \lb{r3.3}
One might ask by how much $\dom(A_{0,min})$ misses out on coinciding with $H^1_0((a,b))$. In fact, not by much as the following elementary consideration shows. It suffices to treat $A_{0,max}$ and focus on the behavior of functions in its domain near the left endpoint $x=a$ only. Suppose $0 \neq f \in \dom(A_{0,max})$ and denote $h = A_{0,max} f$. Then the first-order differential equation $(\alpha_0 f)(x) = h(x)$ for $x \in (a,a+1)$, say (i.e., assuming $b > a+1$ without loss of generality), implies
\begin{equation}
f(x) = C (x-a)^{1/2} - (x-a)^{1/2} \int_x^{a+1} dt \, (t-a)^{-1/2} h(t), \quad x \in (a,a+1), 
\end{equation}
for some $C \in \bbC$. Thus a Cauchy estimate yields
\begin{equation}
|f(x)| \leq |C| (x-a)^{1/2} + (x-a)^{1/2} [\ln(1/(x-a))]^{1/2} \|h\|_{L^2((a,a+1); dt)},
\end{equation}
hence $f(a)=0$ and obviously $f \in L^2((a,a+1);dx)$. In addition,
\begin{align}
& f'(x) = 2^{-1} C (x-a)^{-1/2} - 2^{-1} (x-a)^{-1/2} \int_x^{a+1} dt \, (t-a)^{-1/2} h(t) + h(x),       \no \\
& \hspace*{8.5cm} x \in (a,a+1), 
\end{align}
and the same Cauchy estimate implies
\begin{align}
\begin{split} 
|f'(x)| & \leq |C/2| (x-a)^{-1/2} +  (x-a)^{-1/2} [\ln(1/(x-a))]^{1/2} \|h\|_{L^2((a,a+1);dt)}   \\
& \quad + |h(x)|,  \quad x \in (a,a+1). 
\end{split} 
\end{align}
In particular, the possible failure of $f'$ being $L^2$ near $x=a$ happens only on a logarithmic scale as 
\begin{equation}
\int_{a+\varepsilon}^{a+1} dx \, |f'(x)|^2 \leq 2^{-1} [\ln(1/\varepsilon)]^2 \|h\|_{L^2((a,a+1);dx)}^2 
\big[1 + \Oh\big((\ln(1/\varepsilon))^{-1/2}\big)\big] 
\end{equation}
for $0 < \varepsilon$ sufficiently small. 

These observations extend of course to $s_a \in i \bbR$, that is, $\Re(s_a) = 0$. 
\hfill $\diamond$ 
\end{remark}

Actually, combining the idea behind Remark \ref{r3.3} with appropriate two-weight Hardy-type inequalities 
yields an interesting alternative proof of 
\begin{equation}
\dom(A_{s_a,max}) = H^1_0((a,b)), \quad s_a \in (-\infty, -1] \cup (0,\infty),       \lb{3.42} 
\end{equation} 
compared to that in Theorem \ref{t3.2} as we will indicate next in Lemma \ref{l3.4}.

We start by recalling the following pair of two-weight Hardy-type inequalities due to Muckenhoupt \cite{Mu72}, 
Talenti \cite{Ta69}, Tomaselli \cite{To69}, and Chisholm, Everitt, Littlejohn \cite{CE70/71}, \cite{CEL99} (see, also  \cite[p.~38--40]{KMP07} and the references therein): 
Let $p \in [1,\infty)$, $p^{-1} + (p')^{-1} = 1$, and $0 \leq f$ a.e. on $(a,b)$ be measurable, 
$b \in (a,\infty) \cup \{\infty\}$. Then 
\begin{equation}
\int_a^b dx \, u(x)\bigg(\int_a^x dt \, f(t)\bigg)^p \leq C^p \int_a^b dx \, v(x) |f(x)|^p      \lb{3.43} 
\end{equation} 
holds for some $C \in (0,\infty)$ if and only if
\begin{equation}
A = \sup_{c \in (a,b)} \bigg(\int_c^b ds \, u(s)\bigg)^{1/p} \bigg(\int_a^c dt \, v(t)^{1-p'}\bigg)^{1/p'} 
< \infty.
\end{equation}
In this case the smallest $C$ in \eqref{3.43} satisfies
\begin{align}
\begin{split} 
& A \leq C \leq p^{1/p} (p')^{1/p'} A, \quad p \in (1,\infty),   \\
& C = A, \quad p=1. 
\end{split} 
\end{align} 
Similarly, 
\begin{equation}
\int_a^b dx \, u(x)\bigg(\int_x^b dt \, f(t)\bigg)^p \leq D^p \int_a^bdx \,  v(x) |f(x)|^p      \lb{3.46} 
\end{equation} 
holds for some $D \in (0,\infty)$ if and only if
\begin{equation}
B = \sup_{c \in (a,b)} \bigg(\int_a^c ds \, u(s)\bigg)^{1/p} \bigg(\int_c^b dt \, v(t)^{1-p'}\bigg)^{1/p'} 
< \infty.
\end{equation}
In this case the smallest $D$ in \eqref{3.46} satisfies
\begin{align}
\begin{split} 
& B \leq D \leq p^{1/p} (p')^{1/p'} B, \quad p \in (1,\infty),   \\
& D = B, \quad p=1. 
\end{split} 
\end{align} 

\begin{lemma} \lb{l3.4}
Let $\Re(s_a) \in (-\infty,-1] \cup (0,\infty)$, then
\begin{align}
\dom(A_{s_a,max}) = H^1_0((a,b)).     \lb{3.50} 
\end{align}
\end{lemma}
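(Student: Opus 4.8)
The plan is to establish the two inclusions in \eqref{3.50} separately. The inclusion $H_0^1((a,b)) \subseteq \dom(A_{s_a,max})$ needs no hypothesis on $s_a$: for $f \in H_0^1((a,b))$ one has $f \in AC_{loc}((a,b))$, $f' \in L^2((a,b);dx)$, and the Hardy-type inequality \eqref{B.1} gives $(\dott - a)^{-1} f \in L^2((a,b);dx)$, whence $\alpha_{s_a} f = f' - [s_a + (1/2)](\dott - a)^{-1} f \in L^2((a,b);dx)$. For the reverse inclusion, I would fix $f \in \dom(A_{s_a,max})$, set $h := \alpha_{s_a} f \in L^2((a,b);dx)$, and first dispose of the right endpoint: by locality and the boundedness of $[s_a+(1/2)](\dott-a)^{-1}$ on $[a+\eps,b]$, the behavior of $f$ near $b$ is that of an element of $\dom(A_{-1/2,max})$, so all claims at $b$ reduce to the classical case $s_a = -1/2$ exactly as at the start of the proof of Theorem \ref{t3.2}. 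Hence it suffices to analyze $f$ on a right neighborhood $(a,c)$ of $a$.

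On $(a,c)$ I would integrate the equation $\alpha_{s_a} f = h$ by variation of parameters (as in Remark \ref{r3.3}), using $\big[(\dott-a)^{-s_a-(1/2)} f\big]' = (\dott-a)^{-s_a-(1/2)} h$. For $\Re(s_a) > 0$ one integrates from $x$ to $c$, obtaining
\[
f(x) = C\,(x-a)^{s_a+(1/2)} - (x-a)^{s_a+(1/2)} \int_x^c dt \, (t-a)^{-s_a-(1/2)} h(t), \quad x \in (a,c),
\]
and for $\Re(s_a) \le -1$ one notes (Cauchy--Schwarz) that $(\dott-a)^{-s_a-(1/2)} h \in L^1$ near $a$ and integrates from $a$ to $x$, obtaining
\[
f(x) = C\,(x-a)^{s_a+(1/2)} + (x-a)^{s_a+(1/2)} \int_a^x dt \, (t-a)^{-s_a-(1/2)} h(t), \quad x \in (a,c),
\]
with $C \in \bbC$. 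For $\Re(s_a) > 0$ the homogeneous term $C(\dott-a)^{s_a+(1/2)}$, its derivative, and its product with $(\dott-a)^{-1}$ are all square-integrable near $a$, so it is harmless. For $\Re(s_a) \le -1$ the homogeneous term is not in $L^2$ near $a$ unless $C = 0$, and $C = 0$ is forced: $C = \lim_{x\downarrow a}(x-a)^{-s_a-(1/2)} f(x)$ exists, $f \in L^2((a,b);dx)$ forces $\liminf_{x\downarrow a}(x-a)^{1/2}|f(x)| = 0$, and $\big|(x-a)^{-s_a-(1/2)}\big| = (x-a)^{-\Re(s_a)-1}$ is bounded near $a$ since $\Re(s_a) \le -1$ — the same $\liminf$ argument used for $s_a \in (-\infty,-1]$ in the proof of Theorem \ref{t3.2}.

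It then remains to show $(\dott - a)^{-1} f \in L^2$ near $a$; then $f' = h + [s_a+(1/2)](\dott-a)^{-1} f \in L^2$ near $a$, which combined with the $b$-analysis gives $f' \in L^2((a,b);dx)$, hence $f \in AC([a,b])$; the explicit formulas plus Cauchy--Schwarz give $f(a) = 0$ (and $f(b)=0$ from the $s_a=-1/2$ reduction), so $f \in H_0^1((a,b))$. Dropping the harmless homogeneous term and bounding $\big|(x-a)^{-1} f(x)\big|$ by $(x-a)^{\Re(s_a)-(1/2)} \int_x^c dt\,(t-a)^{-\Re(s_a)-(1/2)}|h(t)|$ (respectively by the same expression with $\int_a^x$), the estimate I need is precisely the two-weight Hardy inequality \eqref{3.46} (resp.\ \eqref{3.43}) with $p = 2$, weights $u(x) = (x-a)^{2\Re(s_a)-1}$ and $v(x) = (x-a)^{2\Re(s_a)+1}$, so that the right-hand side is a constant times $\|h\|_{L^2((a,c);dx)}^2$. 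The associated Muckenhoupt functional ($B$ for \eqref{3.46}, resp.\ $A$ for \eqref{3.43}) is a one-line computation that evaluates to $(2|\Re(s_a)|)^{-1} < \infty$, and this is exactly the step at which the hypothesis $\Re(s_a) \in (-\infty,-1] \cup (0,\infty)$ (in particular $\Re(s_a)\ne 0$) enters. This yields $(\dott-a)^{-1} f \in L^2((a,c);dx)$ and completes \eqref{3.50}.

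The step I expect to be most delicate is the case $\Re(s_a) \le -1$: forcing the homogeneous coefficient $C$ to vanish genuinely uses $f \in L^2$ via the $\liminf$ trick, and one must then pair the surviving particular solution with the correct member of the Muckenhoupt pair \eqref{3.43}/\eqref{3.46} and keep the two sets of weights straight. By contrast, in the excluded range $\Re(s_a) \in (-1,0]$ the homogeneous term $C(\dott-a)^{s_a+(1/2)}$ can genuinely persist and fail to be $H^1$ near $a$, which is precisely why \eqref{3.50} fails there, in agreement with the examples (and the $s_a=0$ discussion, cf.\ Remark \ref{r3.3}) in the proof of Theorem \ref{t3.2}.
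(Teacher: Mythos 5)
Your proposal is correct and follows essentially the same route as the paper's proof of Lemma \ref{l3.4}: solve $\alpha_{s_a}f=h$ by variation of parameters near $a$ (integrating toward the right endpoint of the neighborhood for $\Re(s_a)>0$, and from $a$ for $\Re(s_a)\le -1$, where the homogeneous coefficient is forced to vanish), then control the resulting term by the two-weight Muckenhoupt inequalities \eqref{3.46}, resp.\ \eqref{3.43}, with $u(x)=(x-a)^{2s_a-1}$, $v(x)=(x-a)^{2s_a+1}$, $p=p'=2$, with the endpoint $b$ handled by reduction to the regular case. The only cosmetic slip is that the easy inclusion $H^1_0((a,b))\subseteq\dom(A_{s_a,max})$ should invoke Hardy's inequality \eqref{B.11} (Lemma \ref{lB.3}) rather than \eqref{B.1}, exactly as in the paper.
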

\begin{proof} 
Consider $0 \neq f \in \dom(A_{s_a,max})$ and denote $h = A_{s_a,max} f$, $s_a \in \bbC$. \\[1mm] 
$(i)$ Suppose $\Re(s_a) \in (0,\infty)$. Then the first-order differential equation $(\alpha_{s_a} f)(x) = h(x)$ for $x \in (a,a+1)$, say, implies
\begin{align}
\begin{split}
f(x) = C_1 (x-a)^{s_a + (1/2)} - (x-a)^{s_a + (1/2)} \int_x^{a+1} dt \, (t-a)^{-s_a - (1/2)} h(t),& \\
x \in (a,a+1),&
\end{split} 
\end{align}
for some $C_1 \in \bbC$, and hence
\begin{align}
f'(x) &= C_1 [s_a + (1/2)] (x-a)^{s_a - (1/2)}     \no \\ 
& \quad - [s_a + (1/2)] (x-a)^{s_a - (1/2)} \int_x^{a+1} dt \, (t-a)^{-s_a - (1/2)} h(t) 
+ h(x),     \lb{3.51}  \\
& \hspace*{8.25cm} x \in (a,a+1).    \no
\end{align}
Since 
\begin{align}
& \bigg|(x-a)^{s_a + (1/2)} \int_x^{a+1} dt \, (t-a)^{-s_a - (1/2)} h(t)\bigg|     \no \\
& \quad \leq (x-a)^{s_a + (1/2)} \bigg(\int_x^{a+1} dt \, (t-a)^{-2s_a -1}\bigg)^{1/2} \|h\|_{L^2((a,a+1);dt)}    \\
& \quad = (2s_a)^{-1/2} (x-a)^{1/2} \big[1 - (x-a)^{2s_a}\big]^{1/2} \|h\|_{L^2((a,a+1);dt)}, \quad x \in (a,a+1),   \no
\end{align}
one obtains
\begin{equation}
|f(x)| \underset{x \downarrow a}{=} \Oh\big((x-a)^{1/2}\big) 
\end{equation}
(this is not optimal, see \eqref{B.6}, but it suffices for our current purpose).  
As $(x-a)^{s_a - (1/2)}$, $\Re(s_a) \in (0,\infty)$, generates an element in $L^2((a,a+1);dx)$, one next estimates 
\begin{align}
& \int_a^{a+1} dx \, (x-a)^{2s_a-1} \bigg|\int_x^{a+1} dt \, (t-a)^{-s_a - (1/2)} h(t)\bigg|^2     \\
& \quad \leq 
\int_a^{a+1} dx \, (x-a)^{2s_a - 1} \bigg(\int_x^{a+1} dt \, (t-a)^{-s_a - (1/2)} |h(t)|\bigg)^2 \leq d \int_a^{a+1} dx |h(x)|^2 \no
\end{align}
for some constant $d \in (0,\infty)$, upon choosing $u(x) = (x-a)^{2s_a - 1}$, $v(x) = (x-a)^{2s_a + 1}$, $p=p'=2$ in \eqref{3.46}. Thus, \eqref{3.51} implies 
\begin{equation}
f' \in L^2((a,a+1);dx) \, \text{ and hence } \, f \wti \chi_{[a,a+\varepsilon]} \in H^1_0((a,b)), \quad \Re(s_a) \in (0,\infty). 
\end{equation}
Since by Hardy's inequality \eqref{B.11}, $f \in H^1_0((a,b))$ implies $\alpha_{s_a} f \in L^2((a,b); dx)$ for all 
$s_a \in \bbC$, it follows that 
\begin{equation}
\dom(A_{s_a,max}) = H^1_0((a,b)), \quad \Re(s_a) \in (0,\infty).     \lb{3.56} 
\end{equation}
$(ii)$ Suppose $\Re(s_a) \in (-\infty, -1]$. Then again $(\alpha_{s_a} f)(x) = h(x)$ for $x \in (a,a+1)$, say, implies
\begin{equation}
f(x) = C_2 (x-a)^{s_a + (1/2)} + (x-a)^{s_a + (1/2)} \int_a^x dt \, (t-a)^{-s_a - (1/2)} h(t), \quad x \in (a,a+1), 
\end{equation}
for some $C_2 \in \bbC$, and hence
\begin{align}
f'(x) &= C_2 [s_a + (1/2)] (x-a)^{s_a - (1/2)}       \lb{3.58}  \\
& \quad + [s_a + (1/2)] (x-a)^{s_a - (1/2)} \int_a^x dt \, t^{-s_a - (1/2)} h(t) + h(x), \quad x \in (a,a+1).   \no 
\end{align}
Clearly, $f \in L^2((a,a+1);dx)$ requires the choice $C_2=0$. Since 
\begin{align}
\begin{split} 
& \bigg|(x-a)^{s_a + (1/2)} \int_a^x dt \, (t-a)^{-s_a - (1/2)} h(t)\bigg|   \\
& \quad \leq (x-a)^{1/2} \|h\|_{L^2((a,x);dt)}, \quad x \in (a,a+1), 
\end{split}
\end{align}
one obtains in particular, 
\begin{equation}
|f(x)| \underset{x \downarrow a}{=} \oh\big((x-a)^{1/2}\big).  
\end{equation} 
Estimating 
\begin{align}
& \int_a^{a+1} dx \, (x-a)^{2s_a - 1} \bigg|\int_a^x dt \, (t-a)^{-s_a - (1/2)} h(t)\bigg|^2      \lb{3.61} \\
& \quad \leq 
\int_a^{a+1} dx \, (x-a)^{2s_a - 1} \bigg(\int_a^x dt \, (t-a)^{-s_a - (1/2)} |h(t)|\bigg)^2  \leq c \int_a^{a+1} dx |h(x)|^2      
\no 
\end{align}
for some constant $c \in (0,\infty)$ upon choosing $u(x) = (x-a)^{2s_a - 1}$, $v(x) = (x-a)^{2s_a + 1}$, $p=p'=2$ in \eqref{3.43}, \eqref{3.58} implies 
\begin{equation}
f' \in L^2((a,a+1);dx) \, \text{ and hence } \, f \wti \chi_{[a,a+\varepsilon]} \in H^1_0((a,b)), \quad \Re(s_a) \in (-\infty,-1].     \lb{3.62} 
\end{equation}
This in turn again implies that 
\begin{equation}
\dom(A_{s_a,max}) = H^1_0((a,b)), \quad \Re(s_a) \in (-\infty,-1].       \lb{3.63} 
\end{equation}
\end{proof}

Of course, as shown in Theorem \ref{t3.2}, even $\dom(A_{s_a,min}) = \dom(A_{s_a,max}) = H^1_0((a,b))$ holds in \eqref{3.42} and \eqref{3.50}.

Similarly, one obtains the following alternative proof of Theorem \ref{t3.2} in the case $\Re(s_a) \in (-1,0)$.

\begin{lemma} \lb{l3.5}
Let $\Re (s_a) \in (-1,0)$, then
\begin{equation}
\dom(A_{s_a,min}) = H^1_0((a,b)).    \lb{3.64}
\end{equation}
\end{lemma}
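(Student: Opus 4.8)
The plan is to prove the two inclusions separately: $H^1_0((a,b)) \subseteq \dom(A_{s_a,min})$ by a routine mollification argument, and $\dom(A_{s_a,min}) \subseteq H^1_0((a,b))$ by explicitly integrating the first-order equation $\alpha_{s_a} f = h$ and then invoking a two-weight Hardy inequality, in close analogy with the proof of Lemma~\ref{l3.4}.

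First I would prove $H^1_0((a,b)) \subseteq \dom(A_{s_a,min})$ for every $s_a$ with $\Re(s_a)\in(-1,0)$ (indeed for every $s_a \in \bbC$). Given $f \in H^1_0((a,b))$, pick $f_n \in C_0^{\infty}((a,b))$ with $f_n \to f$ in $H^1((a,b))$. Then $\norm{f_n - f}_{L^2((a,b);dx)} \to 0$, and since $\alpha_{s_a}(f_n - f) = (f_n - f)' - [s_a + (1/2)](x-a)^{-1}(f_n - f)$, Hardy's inequality \eqref{B.11} (together with the boundedness of $(x-a)^{-1}$ away from $a$) gives $\norm{(x-a)^{-1}(f_n - f)}_{L^2((a,b);dx)} \le \const \, \norm{f_n - f}_{H^1((a,b))} \to 0$; hence $\norm{\alpha_{s_a}(f_n - f)}_{L^2((a,b);dx)} \to 0$ and $f \in \dom\big(\ol{\alpha_{s_a}|_{C_0^{\infty}((a,b))}}\big) = \dom(A_{s_a,min})$.

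For the converse, let $f \in \dom(A_{s_a,min})$, set $h := A_{s_a,min} f = \alpha_{s_a} f$, and choose $f_n \in C_0^{\infty}((a,b))$ with $f_n \to f$ and $\alpha_{s_a} f_n \to h$ in $L^2((a,b);dx)$. Using the factorization $\alpha_{s_a} = (x-a)^{s_a+(1/2)}\f{d}{dx}(x-a)^{-s_a-(1/2)}$ recorded in \eqref{3.1} and integrating the identity $\f{d}{dx}\big[(x-a)^{-s_a-(1/2)} f_n\big] = (x-a)^{-s_a-(1/2)}\alpha_{s_a} f_n$ from $a$ to $x$ — the boundary contribution at $a$ vanishing because $\supp(f_n) \subset (a,b)$ is compact — one obtains $(x-a)^{-s_a-(1/2)} f_n(x) = \int_a^x (t-a)^{-s_a-(1/2)}(\alpha_{s_a} f_n)(t)\,dt$ for all $x \in (a,b)$. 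Since $\Re(s_a) \in (-1,0)$ forces $(t-a)^{-s_a-(1/2)} \in L^2$ near $a$, a Cauchy--Schwarz estimate lets one pass to the limit $n \to \infty$ and conclude
\[
f(x) = (x-a)^{s_a+(1/2)}\int_a^x (t-a)^{-s_a-(1/2)} h(t)\,dt, \quad x \in (a,b),
\]
whence $|f(x)| = \oh\big((x-a)^{1/2}\big)$ as $x \downarrow a$, in particular $f(a) = 0$. This step is exactly where minimality of $A_{s_a,min}$ is used: for $\Re(s_a)\in(-1,0)$ the homogeneous solution $(x-a)^{s_a+(1/2)}$ lies in $\dom(A_{s_a,max})$, and $\dom(A_{s_a,min}) \subsetneqq \dom(A_{s_a,max})$, so it is the compact support of the approximants that excludes this component here. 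I expect this to be the main point requiring care; the remainder is routine.

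It then remains to check $f' \in L^2$ near $a$ (near $b$ the inverse-square singularity is absent, so, exactly as in the proof of Theorem~\ref{t3.2}, $f \in \dom(A_{s_a,min})$ behaves like an $H^1_0$-element there, in particular $f(b) = 0$). From $f' = h + [s_a + (1/2)](x-a)^{-1} f$ it suffices to control $(x-a)^{-1} f$ in $L^2$ near $a$. Estimating $|(x-a)^{-1} f(x)| \le (x-a)^{\Re(s_a)-(1/2)}\int_a^x (t-a)^{-\Re(s_a)-(1/2)}|h(t)|\,dt$ and applying the two-weight Hardy inequality \eqref{3.43} with $u(x) = (x-a)^{2\Re(s_a)-1}$, $v(x) = (x-a)^{2\Re(s_a)+1}$, $p = p' = 2$ — whose Muckenhoupt constant is finite precisely because $\Re(s_a) < 0$ — yields $\int_a^{a+1} |(x-a)^{-1} f(x)|^2\,dx \le \const \int_a^{a+1}|h(x)|^2\,dx < \infty$ (assuming, after a translation, that $b > a+1$, exactly as in Lemma~\ref{l3.4} and Remark~\ref{r3.3}). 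Therefore $f' \in L^2((a,b);dx)$, and since also $f \in L^2((a,b);dx)$ with $f(a) = f(b) = 0$, we conclude $f \in H^1_0((a,b))$, completing the proof that $\dom(A_{s_a,min}) = H^1_0((a,b))$.
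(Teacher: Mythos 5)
Your argument is correct, and for the crucial inclusion $\dom(A_{s_a,min})\subseteq H^1_0((a,b))$ it follows the same skeleton as the paper: solve $\alpha_{s_a}f=h$ near $a$, discard the homogeneous component $C_0(x-a)^{s_a+(1/2)}$, and then control $(x-a)^{-1}f$ via the two-weight Hardy inequality \eqref{3.43} with exactly the weights $u(x)=(x-a)^{2s_a-1}$, $v(x)=(x-a)^{2s_a+1}$, $p=p'=2$, i.e.\ the estimate \eqref{3.61}. Where you genuinely deviate is in how $C_0=0$ is obtained: the paper gets it from the generalized boundary value, namely that $f\in\dom(A_{s_a,min})$ forces $\wti f(a)=\lim_{x\downarrow a}f(x)/(x-a)^{s_a+(1/2)}=0$, citing \eqref{2.16}, \eqref{2.28} and the factorization identity in \eqref{4.10} from the second-order (Friedrichs) theory, whereas you work directly with the defining approximants $f_n\in C_0^\infty((a,b))$, integrate $\f{d}{dx}\big[(x-a)^{-s_a-(1/2)}f_n\big]=(x-a)^{-s_a-(1/2)}\alpha_{s_a}f_n$ from $a$ (the boundary term vanishing trivially), and pass to the limit using that $(t-a)^{-\Re(s_a)-(1/2)}$ is square integrable near $a$ precisely because $\Re(s_a)<0$. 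This limiting argument is sound (a.e.\ convergence along a subsequence plus continuity upgrades the identity to all $x$), and it has the merit of being self-contained within the first-order setting, avoiding the forward appeal to Section 4; the paper's route is shorter but leans on the boundary-value machinery. You also spell out the converse inclusion $H^1_0((a,b))\subseteq\dom(A_{s_a,min})$ by approximation together with Hardy's inequality \eqref{B.11}; the paper instead obtains it (in Theorem \ref{t3.2}, case $s_a\in(-1,0)$) from the relative boundedness, with bound strictly less than one, of multiplication by $[s_a+(1/2)](x-a)^{-1}$ with respect to $\alpha_{-1/2}\big|_{H^1_0((a,b))}$ — both are standard and correct, your version being slightly more hands-on, the paper's giving the two domains' equality in one stroke.
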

\begin{proof} 
Consider $0 \neq f \in \dom(A_{s_a,min})$ and denote $h = A_{s_a,min} f$, $\Re(s_a) \in (-1,0)$. Then 
$(\alpha_{s_a} f)(x) = h(x)$ for $x \in (a,a+1)$, say, once again implies
\begin{equation}
f(x) = C_0 (x-a)^{s_a + (1/2)} + (x-a)^{s_a + (1/2)} \int_a^x dt \, t^{-s_a - (1/2)} h(t), \quad x \in (a,a+1),     \lb{3.65}
\end{equation}
for some $C_0 \in \bbC$, and hence
\begin{align}
f'(x) &= C_0 [s_a + (1/2)] (x-a)^{s_a - (1/2)}  \lb{3.66}   \\
& \quad + [s_a + (1/2)] (x-a)^{s_a - (1/2)} \int_a^x dt \, t^{-s_a - (1/2)} h(t) + h(x), \quad x \in (a,a+1).   \no 
\end{align}
Clearly $f \in L^2((a,a+1);dx)$, and in addition, since $f \in \dom(A_{s_a,min})$ necessitates 
\begin{equation}
\wti f(a) = \lim_{x \downarrow a} \f{f(x)}{x^{s_a + (1/2)}} = 0, 
\end{equation} 
(cf.\ \eqref{2.16}, \eqref{2.28}, and the first equality in \eqref{4.10}) one concludes that $C_0 = 0$ in \eqref{3.65}. Thus, once more \eqref{3.61} holds for some constant $c \in (0,\infty)$ upon choosing $u(x) = (x-a)^{2s_a-1}$, $v(x) = (x-a)^{2s_a+1}$, $p=p'=2$ in \eqref{3.43}. In particular, \eqref{3.62} and \eqref{3.64} hold. 
\end{proof}

Together, Remark \ref{r3.3} and Lemmas \ref{l3.4} and \ref{l3.5} recover (parts of) a finite interval analog of 
\cite[Proposition~3.1\,$(i)$,\,$(ii)$]{BDG11}. 

Next, one notes that if the singularity is not at $x=a$ but at $x=b$, that is, if $\alpha_{s_a}$, $\alpha^+_{s_a}$ are replaced by $\beta_{s_b}$, $\beta^+_{s_b}$, where  
\begin{align}
\begin{split} 
& \beta_{s_b} = \f{d}{dx} + \f{s_b + (1/2)}{b-x} = (b-x)^{s_b + (1/2)} \f{d}{dx} (b-x)^{- s_b - (1/2)}, \\
& \beta_{s_b}^+ = - \f{d}{dx} + \f{s_b + (1/2)}{b-x} = - \beta_{-s_b-1}, \quad 
s_b \in \bbR, \; x \in (a,b),      \lb{3.68}
\end{split} 
\end{align}
one obtains  
\begin{equation}
\eta_{s_b} = \beta_{s_b}^+ \beta_{s_b}^{} = - \f{d^2}{dx^2} + \f{s_b^2 - (1/4)}{(b-x)^2}, \quad s_b \in \bbR, \; x \in (a,b).   \lb{3.69} 
\end{equation}
Thus, with the roles of $x=a$ and $x=b$ interchanged, one replaces $\dot A_{s_a,min}$, $\ddot A_{s_a,min}$, 
$A_{s_a,min}$, $A_{s_a,max}$, by 
\begin{align}
& \dot B_{s_b,min} f = \beta_{s_b} f, \quad s_b \in \bbR,      \no \\
& f \in\dom\big(\dot B_{s_b,min}\big) = \big\{g \in L^2((a,b); dx) \, \big| \, g \in AC_{loc}((a,b)); \, 
 \beta_{s_b} g \in L^2((a,b); dx);  \no \\
&\hspace*{6.8cm} \supp(g) \subset (a,b) \text{ compact} \big\},       \\
& \ddot B_{s_b,min} = \beta_{s_b}\big|_{C_0^{\infty}((a,b))}, \quad s_b \in \bbR,   \\
& B_{s_b,min} = \ol{\dot B_{s_b,min}} = \ol{\beta_{s_b}\big|_{C_0^{\infty}((a,b))}}, \quad s_b \in \bbR,    \\
& B_{s_b,max} f = \beta_{s_b} f, \quad s_b \in \bbR,      \\
& f \in\dom(B_{s_b,max}) = \big\{g \in L^2((a,b); dx) \, \big| \, g \in AC_{loc}((a,b)); \, 
 \beta_{s_b} g \in L^2((a,b); dx)\big\}.   \no 
\end{align}
In addition, the characterizations of $B_{s_b,min}$, $B_{s_b,max}$ analogous to those described in Lemma \ref{l3.1}, 
Theorem \ref{t3.2}, Remark \ref{r3.3}, Lemma \ref{l3.4}, and Lemma \ref{l3.5} hold, of course, and will be used in the following without  repeating them here. 

Finally, we turn to the case of $\tau_{s_a,s_b}$ introduced in \eqref{2.23}, 
\begin{align}
\begin{split} 
\tau_{s_a,s_b} = - \f{d^2}{dx^2} + \f{s_a^2 - (1/4)}{(x-a)^2} + \f{s_b^2 - (1/4)}{(x-b)^2} + q(x), \quad x \in (a,b),& \\
s_a, s_b \in \bbR, \; q \in L^{\infty}((a,b); dx), \; q \text{ real-valued~a.e.~on $(a,b)$.}&     \lb{3.74} 
\end{split}
\end{align}

Recalling that for $\phi \in AC_{loc}((a,b))$,
\begin{equation}
\bigg(- \f{d}{dx} + \phi(x)\bigg)\bigg(\f{d}{dx} + \phi(x)\bigg) = - \f{d^2}{dx^2} + \phi(x)^2 - \phi'(x), \quad x \in (a,b),
\end{equation}
and identifying (for some fixed $\varepsilon \in (0,(b-a)/2)$) 
\begin{equation}
\phi(x) = - \f{s_a + (1/2)}{x-a} \wti \chi_{[a, a+\varepsilon]} + \f{s_b + (1/2)}{b-x} \wti \chi_{[b-\varepsilon,b]}, 
\quad x \in (a,b),     \lb{3.76} 
\end{equation}
one obtains 
\begin{equation}
\phi(x)^2 - \phi'(x) = \f{s_a^2 - (1/4)}{(x-a)^2} + \f{s_b^2 - (1/4)}{(b-x)^2} + \wti q(x), \quad x \in (a,b),   \lb{3.77}
\end{equation}
for some
\begin{equation}
\wti q \in L^{\infty}((a,b); dx), \; \wti q \text{ real-valued a.e.~on $(a,b)$.}     \lb{3.78}
\end{equation}
Relations \eqref{3.76}, \eqref{3.77} define $\wti q$, whose precise form (beyond the properties in \eqref{3.78}) are not important for our purpose. Thus, introducing
\begin{align}
& \alpha_{s_a,s_b} = \f{d}{dx} + \phi(x) 
= \f{d}{dx} - \f{s_a + (1/2)}{x-a} \wti \chi_{[a, a+\varepsilon]} + \f{s_b + (1/2)}{b-x} \wti \chi_{[b-\varepsilon,b]},
\no \\
& \alpha_{s_a,s_b}^+ = - \f{d}{dx} + \phi(x) 
= - \f{d}{dx} - \f{s_a + (1/2)}{x-a} \wti \chi_{[a, a+\varepsilon]} + \f{s_b + (1/2)}{b-x} \wti \chi_{[b-\varepsilon,b]},  \no \\
& \hspace*{7.64cm} s_a, s_b \in \bbR, \; x \in (a,b), 
\end{align}
one concludes that 
\begin{align}
\tau_{s_a,s_b} &= - \f{d^2}{dx^2} + \f{s_a^2 - (1/4)}{(x-a)^2} + \f{s_b^2 - (1/4)}{(x-b)^2} + q(x),   \no \\
&= \alpha_{s_a,s_b}^+  \alpha_{s_a,s_b}^{} + q(x) - \wti q(x), \quad x \in (a,b),      \lb{3.80} \\
& \hspace*{-8mm} 
s_a, s_b \in \bbR, \; q, \wti q \in L^{\infty}((a,b); dx), \; q, \wti q \text{ real-valued~a.e.~on $(a,b)$,}  \no
\end{align}
explaining our interest in the differential expressions $\alpha_{s_a,s_b}^{}$, $ \alpha_{s_a,s_b}^+$. Associated with the latter we thus introduce the operators in $L^2((a,b);dx)$,
\begin{align}
& \dot A_{s_a,s_b,min} f = \alpha_{s_a,s_b} f, \quad s_a, s_b \in \bbR,      \no \\
& f \in\dom\big(\dot A_{s_a,s_b,min}\big) = \big\{g \in L^2((a,b); dx) \, \big| \, g \in AC_{loc}((a,b));   \\ 
&\hspace*{3.85cm} \supp(g) \subset (a,b) \text{ compact}; \,  \alpha_{s_a,s_b} g \in L^2((a,b); dx)\big\},   \no \\
& \ddot A_{s_a,s_b,min} = \alpha_{s_a,s_b}\big|_{C_0^{\infty}((a,b))}, \quad s_a, s_b \in \bbR,   \\
& A_{s_a,s_b,min} = \ol{\dot A_{s_a,s_b,min}} = \ol{\alpha_{s_a,s_b}\big|_{C_0^{\infty}((a,b))}}, 
\quad s_a, s_b \in \bbR,    \\
& A_{s_a,s_b,max} f = \alpha_{s_a,s_b} f, \quad s_a, s_b \in \bbR,   \no \\
& f \in\dom(A_{s_a,s_b,max}) = \big\{g \in L^2((a,b); dx) \, \big| \, g \in AC_{loc}((a,b));   \\ 
& \hspace*{5.8cm} \alpha_{s_a,s_b} g \in L^2((a,b); dx)\big\}.   \no 
\end{align}
 
Employing the local nature of the singularities at $x=a$ and $x=b$ then immediately yields the following 
analog of Theorem \ref{t3.2} (and the analogous theorem for $B_{s_b,min}$, $B_{s_b,max}$) for 
$A_{s_a,s_b,min}$, $A_{s_a,s_b,max}$.

\begin{theorem} \lb{t3.6}
Let $s_a, s_b \in \bbR$. \\[1mm]
$(i)$ For all $s_a, s_b \in \bbR \backslash \{0\}$, 
\begin{equation}
\dom(A_{s_a,s_b,min}) = H_0^1((a,b)).    \lb{3.85}
\end{equation}
$(ii)$ For $s_a = 0$ and $s_b \in \bbR \backslash \{0\}$, or, for $s_a \in \bbR \backslash \{0\}$ and $s_b = 0$, $C_0^{\infty}((a,b))$ is a core for $A_{s_a,s_b,min}$ and $H_0^1((a,b)) \subsetneqq \dom(A_{0,s_b,min})$ and 
$H_0^1((a,b)) \subsetneqq \dom(A_{s_a,0,min})$. In addition,  
\begin{align}
\dom(A_{0,s_b,min}) &= \big\{f \in L^2((a,b); dx) \, \big| \, f \in AC_{loc}((a,b)); \, f(a) = 0 = f(b);   \no \\
& \hspace*{5.42cm} \alpha_{0,s_b} f \in L^2((a,b); dx)\big\}    \lb{3.86} \\
&= \dom(A_{0,s_b,max}), \quad s_b \in \bbR \backslash \{0\},    \no \\
\dom(A_{s_a,0,min}) &= \big\{f \in L^2((a,b); dx) \, \big| \, f \in AC_{loc}((a,b)); \, f(a) = 0 = f(b);    \no \\
& \hspace*{5.45cm}  \alpha_{s_a,0} f \in L^2((a,b); dx)\big\}    \lb{3.87} \\
&= \dom(A_{s_a,0,max}), \quad s_a \in \bbR \backslash \{0\}.   \no 
\end{align} 
In fact, the boundary conditions $f(a)=0=f(b)$ in \eqref{3.86} can be replaced by 
\begin{equation} 
\lim_{x \downarrow a} \f{f(x)}{[(x-a) \ln(R/(x-a))]^{1/2}} = 0 
= \lim_{x \uparrow b} \f{f(x)}{(b-x)^{1/2}},     \lb{3.88}
\end{equation}
and the boundary conditions $f(a)=0=f(b)$ in \eqref{3.87} can be replaced by 
\begin{equation}
\lim_{x \downarrow a} \f{f(x)}{(x-a)^{1/2}}= 0 = 
\lim_{x \uparrow b} \f{f(x)}{[(b-x) \ln(R/(b-x))]^{1/2}} .      \lb{3.89}
\end{equation} 
$(iii)$ For $s_a, s_b \in (-\infty, -1] \cup (0,\infty)$ one has, 
\begin{align}
\dom(A_{s_a,s_b,max}) &= \big\{f \in L^2((a,b); dx) \, \big| \, f \in AC_{loc}((a,b)); \, 
\alpha_{s_a,s_b} f \in L^2((a,b); dx)\big\}    \no \\
& = \big\{f \in L^2((a,b); dx) \, \big| \, f \in AC_{loc}((a,b)); \, f(a) = 0 = f(b);    \lb{3.90} \\
& \hspace*{5.35cm} \alpha_{s_a,s_b} f \in L^2((a,b); dx)\big\}      \no \\
&= \dom(A_{s_a,s_b,min}) = H^1_0((a,b)).    \no
\end{align}
The boundary condition $f(a)=0=f(b)$ in \eqref{3.90} can be replaced by 
\begin{equation}
\lim_{x \downarrow a} \f{f(x)}{(x-a)^{1/2}} = 0 = \lim_{x \uparrow b} \f{f(x)}{(b-x)^{1/2}}.      \lb{3.92}
\end{equation}
In particular,
\begin{equation}
A_{s_a,s_b,max} = A_{s_a,s_b,min}, \quad s_a, s_b \in (-\infty, -1] \cup [0,\infty).  
\end{equation}
$(iv)$ For $s_a \in (-1,0)$ and/or $s_b \in (-1,0)$,  
\begin{equation}
H_0^1((a,b)) = \dom(A_{s_a,s_b,min}) \subsetneqq \dom(A_{s_a,s_b,max}).     \lb{3.94} 
\end{equation}
\end{theorem}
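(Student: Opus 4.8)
The plan is to exploit the strictly local character of the singular coefficients of $\alpha_{s_a,s_b}$ and to reduce every assertion to the one-endpoint results already established, namely Theorem~\ref{t3.2} together with Remark~\ref{r3.3} and Lemmas~\ref{l3.4}, \ref{l3.5} near $x=a$, their $\beta_{s_b}$-analogues for $B_{s_b,min}$, $B_{s_b,max}$ near $x=b$, and the elementary regular theory of $d/dx$ in the interior. With $\varepsilon\in(0,(b-a)/2)$ small and $\phi$ as in \eqref{3.76}, one has, in a one-sided neighbourhood of $a$, $\alpha_{s_a,s_b}=\alpha_{s_a}$ (since there $\wti\chi_{[a,a+\varepsilon]}\equiv1$ and $\wti\chi_{[b-\varepsilon,b]}\equiv0$), symmetrically $\alpha_{s_a,s_b}=\beta_{s_b}$ near $b$, and $\alpha_{s_a,s_b}=d/dx$ plus a bounded $C^{\infty}$ potential on any compact subinterval of $(a,b)$. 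First I would fix a $C^{\infty}$ partition of unity $1=\chi_a+\chi_m+\chi_b$ on $(a,b)$ subordinate to a cover by $(a,a+2\varepsilon)$, $(a+\varepsilon,b-\varepsilon)$, $(b-2\varepsilon,b)$, with $\chi_a\equiv1$ near $a$ and $\chi_b\equiv1$ near $b$, so that multiplication by $\chi_a$ transplants a problem to a neighbourhood of $a$ where $\alpha_{s_a,s_b}$ agrees with $\alpha_{s_a}$, and likewise for $\chi_b$ and $\beta_{s_b}$.

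Next I would treat the maximal operator, which is the easy half: the defining conditions ``$f\in AC_{loc}((a,b))$ and $\alpha_{s_a,s_b}f\in L^2((a,b);dx)$'' are purely local, so $f\in\dom(A_{s_a,s_b,max})$ if and only if $\chi_a f$, viewed near $a$, lies in $\dom(A_{s_a,max})$, $\chi_b f$, viewed near $b$, lies in $\dom(B_{s_b,max})$, and $f$ is absolutely continuous with square-integrable derivative on every compact subinterval (which is automatic there). Reading off the endpoint behaviour from Theorem~\ref{t3.2}\,$(ii)$,$(iii)$ and Remark~\ref{r3.3} (and Lemmas~\ref{l3.4}, \ref{l3.5} for the sharpened vanishing) at $x=a$, and from the $\beta_{s_b}$-analogues at $x=b$, yields \eqref{3.90}, \eqref{3.92}, the ``$\dom(A_{0,s_b,max})$'' and ``$\dom(A_{s_a,0,max})$'' equalities in \eqref{3.86} and \eqref{3.87}, and the strict inclusions in part $(iv)$, the latter witnessed explicitly by $(x-a)^{s_a+(1/2)}\wti\chi_{[a,a+\varepsilon]}$ and/or $(b-x)^{s_b+(1/2)}\wti\chi_{[b-\varepsilon,b]}$, exactly as at the end of the proof of Theorem~\ref{t3.2}.

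Then I would pin down the minimal operator by a sandwich argument. On the one hand $H_0^1((a,b))\subseteq\dom(A_{s_a,s_b,min})$, because $C_0^{\infty}((a,b))\subseteq\dom(\dot A_{s_a,s_b,min})$ and Hardy's inequality \eqref{B.11} shows the $\alpha_{s_a,s_b}$-graph norm is dominated by the $H^1$ norm on $C_0^{\infty}((a,b))$. On the other hand, given $f\in\dom(A_{s_a,s_b,min})$, choose $f_n\in C_0^{\infty}((a,b))$ converging to $f$ in graph norm; from $\alpha_{s_a}(\chi_a f_n)=\chi_a\,\alpha_{s_a,s_b}f_n+\chi_a'f_n$ (and the analogue with $\chi_b$, $\beta_{s_b}$) one gets $\chi_a f\in\dom(A_{s_a,min})$ and $\chi_b f\in\dom(B_{s_b,min})$, so Theorem~\ref{t3.2}\,$(i)$,$(ii)$,$(iv)$ near $a$ and its $b$-analogue near $b$, together with interior $H^1_{loc}$-regularity and $f,f'\in L^2((a,b);dx)$, determine $\dom(A_{s_a,s_b,min})$ case by case in the ranges of $s_a,s_b$: it equals $H_0^1((a,b))$ when $s_a,s_b\neq0$ (giving \eqref{3.85} and the last equality in \eqref{3.90}), it is the larger set \eqref{3.86}, resp.\ \eqref{3.87}, with boundary conditions refinable to \eqref{3.88}, resp.\ \eqref{3.89}, when exactly one of $s_a,s_b$ vanishes, and the strict inclusions $H_0^1((a,b))\subsetneqq\dom(A_{0,s_b,min})$, $\dom(A_{s_a,0,min})$ in \eqref{3.94} again follow from the explicit example functions. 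Finally, that $C_0^{\infty}((a,b))$ is a core, i.e.\ $\ol{\alpha_{s_a,s_b}|_{C_0^{\infty}((a,b))}}=A_{s_a,s_b,min}$, follows from the mollification argument in part $(4)$ of the proof of Theorem~\ref{t3.2}, which carries over verbatim since a function of compact support in $(a,b)$ never meets the singularities of $\alpha_{s_a,s_b}$.

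I expect the main obstacle to be the bookkeeping in the minimal-operator step: one must verify that the cutoff manipulations $\chi_a f_n$, $\chi_b f_n$ remain admissible approximating functions and that graph-norm convergence persists under multiplication by the cutoffs (non-constant on the two transition zones), and then that Theorem~\ref{t3.2} and its $b$-analogue may legitimately be invoked \emph{locally}, i.e.\ applied to restrictions in a neighbourhood of each endpoint rather than on all of $(a,b)$. Alternatively, one could bypass the approximation argument by establishing the adjoint identity $(\dot A_{s_a,s_b,min})^{*}=-A_{-s_a-1,-s_b-1,max}$ as in Lemma~\ref{l3.1} and then reading off $\dom(A_{s_a,s_b,min})$ as the elements of $\dom(A_{s_a,s_b,max})$ on which the boundary form against $\dom(A_{-s_a-1,-s_b-1,max})$ vanishes; given the explicit cutoff structure in \eqref{1.8}, \eqref{3.76}, either route is routine, and no inequality beyond those collected in Appendix~\ref{sB} is needed.
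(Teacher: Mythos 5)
Your proposal takes essentially the same route as the paper: the paper gives no separate proof of Theorem \ref{t3.6}, merely asserting that the local nature of the two inverse-square singularities lets one read everything off Theorem \ref{t3.2} and its $\beta_{s_b}$-analogue, which is precisely your partition-of-unity/cutoff reduction spelled out. The details you add (graph-norm stability under the cutoffs, Hardy's inequality \eqref{B.11} for $H_0^1((a,b))\subseteq\dom(A_{s_a,s_b,min})$, and the explicit witness functions for the strict inclusions) are exactly what the paper leaves implicit, so the argument is correct.
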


For simplicity we assumed $q \in L^{\infty}((a,b);dx)$ throughout this paper. However, neither the real-valuedness 
of $q$, nor its essential boundedness is needed for the principal domain considerations in this section. All that is required in the end is that $q$ is infinitesimally bounded with respect to the underlying maximal operator in question so that $q$ cannot influence domain considerations.

\section{Domain Properties of Bessel-Type Operators} \lb{s4}

In this section we derive our principal results on (second-order) Bessel-type operators associated with 
$\omega_{s_a} = \alpha_{s_a}^+ \alpha_{s_a}^{}$ (resp., $\eta_{s_b} = \beta_{s_b}^+ \beta_{s_b}^{}$) and 
$\tau_{s_a,s_b} = \alpha_{s_a,s_b}^+  \alpha_{s_a,s_b}^{} + q(\dott) - \wti q(\dott)$. 

Recalling the differential expressions $\omega_{s_a}$ in \eqref{3.2}, 
\begin{equation}
\omega_{s_a} = \alpha_{s_a}^+ \alpha_{s_a}^{} = - \f{d^2}{dx^2} + \f{s_a^2 - (1/4)}{(x-a)^2}, \quad s_a \in [0,\infty), 
\; x \in (a,b),     \lb{4.1} 
\end{equation} 
we turn to the analogous maximal and minimal $L^2$-realizations as well as the Friedrichs extension corresponding to $\omega_{s_a}$, 
$s_a \in [0,\infty)$, employing the boundary values $\wti g(a)$, ${\wti g}^{\, \prime}(a)$ in \eqref{2.12}, \eqref{2.13} at $x=a$ and $g(b)$, $g'(b)$ at $x=b$: 
\begin{align}
&S_{s_a,max} f = \omega_{s_a} f, \quad s_a \in [0,\infty),     \no \\
& f \in \dom(S_{s_a,max})=\big\{g\in L^2((a,b); dx) \, \big| \,  g,g' \in AC_{loc}((a,b));   \lb{4.2} \\
& \hspace*{6.2cm}  \omega_{s_a} g\in L^2((a,b); dx)\big\},     \no \\
&\dot S_{s_a,min} f = \omega_{s_a} f, \quad s_a \in [0,\infty),   \no
\\
& f \in \dom\big(\dot S_{s_a,min}\big)=\big\{g\in L^2((a,b); dx)  \, \big| \,  g,g' \in AC_{loc}((a,b));   
\lb{4.3} \\ 
&\hspace*{1.8cm} \supp \, (g)\subset(a,b) \text{ is compact; } 
 \omega_{s_a} g\in L^2((a,b); dx)\big\},   \no \\
& \ddot S_{s_a,min} = \omega_{s_a}\big|_{C_0^{\infty}((a,b))}, \quad s_a \in [0,\infty),  \lb{4.4} \\
& S_{s_a,min} = \ol{\dot S_{s_a,min}} = \ol{\ddot S_{s_a,min}}, \quad s_a \in [0,\infty),    \lb{4.5} \\ 
&S_{s_a,min} f = \omega_{s_a} f,     \no \\
& f \in \dom(S_{s_a,min})= \big\{g\in L^2((a,b); dx) \, \big| \, g,g' \in AC_{loc}((a,b));    \lb{4.6} \\ 
& \hspace*{1.2cm}  \wti g(a) = {\wti g}^{\, \prime}(a) = g(b) = g^{\prime}(b) = 0; \, 
 \omega_{s_a} g\in L^2((a,b); dx)\big\},  \quad s_a \in [0,1),     \no \\
& S_{s_a,min} = S_{s_a,max}, \quad s_a \in [1,\infty),   \lb{4.7} \\
& S_{s_a,F} f =\omega_{s_a} f,\quad f\in\dom(S_{s_a,F})=\big\{g\in\dom(S_{s_a,max}) \, \big| \, 
\wti g(a) = g(b)=0\big\},     \lb{4.8} \\ 
& \hspace*{9.8cm} s_a \in [0,1),    \no \\ 
& S_{s_a,F} f =\omega_{s_a} f,\quad f\in\dom(S_{s_a,F})=\big\{g\in\dom(S_{s_a,max}) \, \big| \,  g(b)=0\big\}, 
 \lb{4.9}  \\
& \hspace*{8.55cm} s_a \in [1,\infty).   \no
\end{align}

Theorem \ref{t3.2} leads to a somewhat bewilderingly variety of seemingly different, yet obviously equivalent, characterizations of $S_{s_a,F}$ and $\dom\big(S_{s_a,F}^{1/2}\big)$ as follows (we note $S_{s_a,F}\geq\varepsilon I$ for some $\varepsilon>0$ as shown in Theorem \ref{t5.1}):
\begin{align}
S_{s_a,F} &= A_{s_a,min}^* A_{s_a,min}^{} = - A_{-s_a - 1,max} A_{s_a,min}     \lb{4.10} \\
&= S_{s_a,max}\big|_{\dom\big(S_{s_a,F}^{1/2}\big)} = S_{s_a,max}\big|_{\dom(A_{s_a,min})}     \lb{4.11} \\
&=\begin{cases} S_{0,max}\big|_{\dom(A_{0,min})}, & s_a = 0, 
\\[1mm] 
S_{s_a,max}\big|_{H^1_0((a,b))}, & s_a \in (0,\infty), \\[1mm] 
S_{s_a,max}\big|_{\{f \in \dom(S_{s_a,max}) \,|\, \wti f(a) = 0 = f(b)\}}, & s_a \in [0,1), \\
S_{s_a,max}\big|_{\{f \in \dom(S_{s_a,max}) \,|\, f(b) = 0\}}, & s_a \in [1,\infty),
\end{cases}      \lb{4.12} 
\end{align}
and 
\begin{align}
\begin{split} 
& Q_{S_{s_a,F}} (f,g) = (\alpha_{s_a} f, \alpha_{s_a} g)_{L^2((a,b);dx)},     \lb{4.13} \\ 
& f, g \in \dom(Q_{S_{s_a,F}}) = \dom(A_{s_a,min}), \quad s_a \in [0,\infty),   
\end{split} \\
& \dom\big(S_{s_a,F}^{1/2}\big) = \dom(Q_{S_{s_a,F}}) = \dom(A_{s_a,min})   \no \\
& \hspace*{1.8cm} = \begin{cases}
\dom(A_{0,min}), & s_a = 0, \\
H^1_0((a,b)), & s_a \in (0,\infty). 
\end{cases}     \lb{4.14} 
\end{align}
In this context we note that \eqref{4.10} can be found in \cite{BDG11}, \cite{DG21} (for the interval $(0,\infty)$), \cite{GP79}, \cite{Ka78}, \cite{KT11} (also for the interval $(0,\infty)$), for \eqref{4.11} we refer to \cite{EK07}, \cite{GP79}, \cite{Ka78}, for \eqref{4.12} to \cite{GLN20}, 
\cite{Ka78}, \cite{NZ92}, \cite{Ro85}, and in connection with \eqref{4.13} to \cite{AB16}, \cite{BDG11}, \cite{DG21}, \cite{GP79}, \cite{Ka78}. Relations \eqref{4.10}--\eqref{4.14} do by no means exhaust the possible descriptions of $S_{s_a,F}$ and $\dom\big(S_{s_a,F}^{1/2}\big)$ and more can be found, for instance, in \cite{AA12}, \cite{AB15}, \cite{AB16}, \cite{BDG11}, \cite{DF21}, \cite{DG21}, \cite{EK07}, \cite{GP84}, \cite{Ka78}, \cite{KT11}, \cite{Ro85}.  

Interchanging the roles of $x=a$ and $x=b$, that is, interchanging $A_{s_a,min}$, $A_{s_a,max}$  with $B_{s_b,min}$, $B_{s_b,max}$, relations \eqref{4.2}--\eqref{4.14} immediately extend to the singular second order operators generated by 
\begin{equation} 
\eta_{s_b} = \beta_{s_b}^+ \beta_{s_b}^{} = - \f{d^2}{dx^2} + \f{s_b^2 - (1/4)}{(b-x)^2}, 
\quad s_b \in [0,\infty), \; x \in (a,b)
\end{equation} 
(i.e., the analogs of $S_{s_a,min}$, $S_{s_a,max}$, $S_{s_a,F}$, etc., where the inverse square singularity is now at $x=b$ rather than at $x=a$). We refrain from detailing these results.

Instead, employing the local nature of these inverse square singularities at $x=a$ and/or $x=b$, and noticing that 
$q, \wti q \in L^{\infty}((a,b); dx)$ such that $q, \wti q$ can be ignored in connection with domain questions, one thus obtains the following analogs of \eqref{4.2}--\eqref{4.14} when $\omega_{s_a}$, respectively, $\eta_{s_b}$, is replaced by $\tau_{s_a,s_b}$, where 
\begin{align}
\tau_{s_a,s_b} &= - \f{d^2}{dx^2} + \f{s_a^2 - (1/4)}{(x-a)^2} + \f{s_b^2 - (1/4)}{(x-b)^2} + q(x),   \no \\
&= \alpha_{s_a,s_b}^+  \alpha_{s_a,s_b}^{} + q(x) - \wti q(x), \quad x \in (a,b),      \lb{4.16} \\
& \hspace*{-8mm} 
s_a, s_b \in \bbR, \; q, \wti q \in L^{\infty}((a,b); dx), \; q, \wti q \text{ real-valued~a.e.~on $(a,b)$,}  \no
\end{align}
with $\wti q$ introduced via \eqref{3.76}, \eqref{3.77}.

\begin{theorem} \lb{t4.1} 
Let $s_a, s_b \in [0,\infty)$, then, 
\begin{align}
&T_{s_a,s_b,max} f = \tau_{s_a,s_b} f, \quad s_a, s_b \in [0,\infty),     \no \\
& f \in \dom(T_{s_a,s_b,max})=\big\{g\in L^2((a,b); dx) \, \big| \,  g,g' \in AC_{loc}((a,b));   \lb{4.17} \\
& \hspace*{6.25cm}  \tau_{s_a,s_b} g\in L^2((a,b); dx)\big\},     \no \\
&\dot T_{s_a,s_b,min} f = \tau_{s_a,s_b} f, \quad s_a, s_b \in [0,\infty),   \no
\\
& f \in \dom\big(\dot T_{s_a,s_b,min}\big)=\big\{g\in L^2((a,b); dx)  \, \big| \,  g,g' \in AC_{loc}((a,b));   
\lb{4.18} \\ 
&\hspace*{1.85cm} \supp \, (g)\subset(a,b) \text{ is compact; } 
\tau_{s_a,s_b} g\in L^2((a,b); dx)\big\},   \no \\
& \ddot T_{s_a,s_b,min} = \tau_{s_a,s_b}\big|_{C_0^{\infty}((a,b))}, \quad s_a, s_b \in [0,\infty),  \lb{4.19} \\
& T_{s_a,s_b,min} = \ol{\dot T_{s_a,s_b,min}} = \ol{\ddot T_{s_a,s_b,min}}, \quad s_a, s_b \in [0,\infty),    \lb{4.20} \\ 
&T_{s_a,s_b,min} f = \tau_{s_a,s_b} f,     \no \\
& f \in \dom(T_{s_a,s_b,min})   \lb{4.21} \\ 
& \hspace*{6mm} = \left\{g\in \dom(T_{s_a,s_b,max}) \, \middle \vert \; \begin{matrix} 
\wti g(a) = {\wti g}^{\, \prime}(a) = \wti g(b) = {\wti g}^{\, \prime}(b) = 0, \;\, s_a, s_b \in [0,1),  \\
\hspace*{-8.5mm} \wti g(a) = {\wti g}^{\, \prime}(a) = 0, \;\, s_a \in [0,1), \; s_b \in [1,\infty), \\
\hspace*{-9.3mm} \wti g(b) = {\wti g}^{\, \prime}(b) = 0, \;\, s_a \in [1,\infty), \; s_b \in [0,1) \end{matrix} 
\right\},  \no \\ 
& T_{s_a,s_b,min} = T_{s_a,s_b,max}, \quad s_a, s_b \in [1,\infty),   \lb{4.22} \\
& T_{s_a,s_b,F} f =\tau_{s_a,s_b} f,     \no \\
& f\in\dom(T_{s_a,s_b,F})      \lb{4.23}  \\
&  \hspace*{6mm} = \left\{g\in\dom(T_{s_a,s_b,max}) \, \middle| \; \begin{matrix} 
\hspace*{-3.5mm}  \wti g(a) = \wti g(b) = 0, \;\, s_a, s_b \in [0,1),  \\ 
 \wti g(a) = 0, \;\, s_a \in [0,1), \; s_b \in [1,\infty),  \\ 
 \wti g(b)=0, \;\, s_a \in [1,\infty), \; s_b \in [0,1). \end{matrix} \right\},    \no \\
& T_{s_a,s_b,F} = T_{s_a,s_b,max}, \quad s_a, s_b \in [1,\infty).      \lb{4.24} 
\end{align}
\end{theorem}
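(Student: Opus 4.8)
The plan is to reduce the entire statement to facts already established for the one‑endpoint expressions $\omega_{s_a}$ (near $a$) and $\eta_{s_b}$ (near $b$), using that the two inverse square singularities of $\tau_{s_a,s_b}$ are strictly local to the endpoints and that the remaining terms are bounded. First I would check that $\tau_{s_a,s_b}$ fits the three‑coefficient framework of Section~\ref{s2}: taking $p=r=1$ and the single coefficient $\f{s_a^2-(1/4)}{(x-a)^2}+\f{s_b^2-(1/4)}{(x-b)^2}+q$, which lies in $L^1_{loc}((a,b);dx)$ because its only singularities sit at $a,b\notin(a,b)$, Hypothesis~\ref{h2.1} holds; and since $\tau_{s_a,s_b}=\alpha_{s_a,s_b}^+\alpha_{s_a,s_b}^{}+q-\wti q\ge-\|q-\wti q\|_{L^\infty((a,b);dx)}I$ on $C_0^\infty((a,b))$, Hypothesis~\ref{h2.5} holds as well. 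With this in hand, \eqref{4.17}, \eqref{4.18}, \eqref{4.19} are just the definitions \eqref{2.2}, \eqref{2.3} (and of $\ddot T_{min}$) specialized to $\tau_{s_a,s_b}$, and closability of $\dot T_{s_a,s_b,min}$ together with $(\dot T_{s_a,s_b,min})^*=T_{s_a,s_b,max}$ is immediate from Section~\ref{s2}.

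Next I would prove \eqref{4.20} by the mollification argument already used in the proof of Theorem~\ref{t3.2}\,$(ii)$: if $g\in\dom(\dot T_{s_a,s_b,min})$ has $\supp(g)\subseteq[c,d]$ with $a<c<d<b$, then on a slightly larger compact subinterval the potential coefficient is bounded, so $\tau_{s_a,s_b}=-\f{d^2}{dx^2}+(\text{bounded})$ there, $g''\in L^2$ locally, and the convolutions $g_n=g*h_n$ satisfy $g_n\to g$ and $\tau_{s_a,s_b}g_n\to\tau_{s_a,s_b}g$ in $L^2((a,b);dx)$; hence $C_0^\infty((a,b))$ is a core for $T_{s_a,s_b,min}$.

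The core of the argument is \eqref{4.21}--\eqref{4.24}. Here the plan is: restricting to $(a,(a+b)/2]$ the term $\f{s_b^2-(1/4)}{(x-b)^2}$ is bounded, so near $a$ the expression $\tau_{s_a,s_b}$ is $\omega_{s_a}$ perturbed by a bounded potential, and symmetrically near $b$ it is $\eta_{s_b}$ perturbed by a bounded potential. Because the limit circle/limit point classification, the existence and leading behavior of the generalized boundary values $\wti g(a),{\wti g}^{\,\prime}(a),\wti g(b),{\wti g}^{\,\prime}(b)$ as in \eqref{2.24}--\eqref{2.31}, and the Friedrichs boundary conditions all localize at the respective endpoint and are insensitive to adding a bounded potential — exactly the mechanism invoked in the transition from $q=0$ to $q\in L^\infty$ after \eqref{2.23} and in establishing \eqref{4.2}--\eqref{4.14} for the $\omega_{s_a}$ and $\eta_{s_b}$ realizations — one transfers the one‑endpoint facts: $\tau_{s_a,s_b}$ is in the limit circle case at $a$ (resp.\ $b$) iff $s_a\in[0,1)$ (resp.\ $s_b\in[0,1)$) and in the limit point case otherwise, so $n_\pm(T_{s_a,s_b,min})\in\{2,1,0\}$ in the three cases. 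Then \eqref{4.21} is \eqref{2.16} together with Remark~\ref{r2.7} (at a limit point endpoint no boundary value survives, which is what produces the case split), \eqref{4.22} is the $n_\pm=0$ case of Theorem~\ref{t2.4}\,$(ii)$, \eqref{4.23} is \eqref{2.17} together with Remark~\ref{r2.7} (the Friedrichs condition $\wti g=0$ is imposed only at limit circle endpoints, in agreement with \eqref{4.8}, \eqref{4.9} and their $\eta_{s_b}$‑analogues), and \eqref{4.24} holds since for $s_a,s_b\in[1,\infty)$ the operator $T_{s_a,s_b,min}=T_{s_a,s_b,max}$ is already self‑adjoint, hence equal to its Friedrichs extension. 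Throughout, adding back the bounded real‑valued potential $q-\wti q$ changes neither the form domain nor any boundary value, so it is carried along harmlessly.

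I expect the main obstacle to be making the \emph{locality plus bounded‑perturbation} heuristic airtight: carefully justifying that adding $\f{s_b^2-(1/4)}{(x-b)^2}\big|_{(a,(a+b)/2]}+q$ near $a$ (and its mirror near $b$) alters neither the limit circle/limit point type, nor the precise generalized boundary values \eqref{2.28}--\eqref{2.31}, nor the Friedrichs condition, and likewise that \eqref{2.17} and the core statement survive the mixed limit point/limit circle situation. Since the one‑endpoint versions of all these facts are already available from Section~\ref{s3} and the discussion surrounding \eqref{4.2}--\eqref{4.14}, this is bookkeeping rather than new analysis, and no genuinely new estimate is required.
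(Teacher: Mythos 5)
Your proposal is correct and follows essentially the same route as the paper: Theorem \ref{t4.1} is obtained there precisely by combining the general singular Weyl--Titchmarsh--Kodaira framework of Section \ref{s2} (the definitions \eqref{2.2}, \eqref{2.3}, the characterizations \eqref{2.16}, \eqref{2.17}, Theorem \ref{t2.4}, Remark \ref{r2.7}, and the boundary values \eqref{2.28}--\eqref{2.31}) with the locality of the two inverse square singularities and the observation that the bounded terms $q,\wti q$ cannot influence domain considerations, exactly as in your reduction to the one-endpoint results \eqref{4.2}--\eqref{4.14}. Your extra details (checking Hypotheses \ref{h2.1} and \ref{h2.5} via the factorization $\tau_{s_a,s_b}=\alpha_{s_a,s_b}^+\alpha_{s_a,s_b}^{}+q-\wti q$, and the mollification argument for \eqref{4.20}) merely make explicit what the paper leaves implicit.
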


Moreover, the analog of Theorem \ref{t3.2} then leads to the following characterizations of $T_{s_a,s_b,F}$ and its form domain, $\dom\big(|T_{s_a,s_b,F}|^{1/2}\big)$.

\begin{theorem} \lb{t4.2}  
Let $s_a, s_b \in [0,\infty)$, then, 
\begin{align}
T_{s_a,s_b,F} &= A_{s_a,s_b}^* A_{s_a,s_b}^{} + q - \wti q = A_{-s_a-1,-s_b-1} A_{s_a,s_b}^{} + q - \wti q      \\ 
&= T_{s_a,s_b,max}\big|_{\dom(|T_{s_a,s_b,F}|^{1/2})} 
= T_{s_a,s_b,max}\big|_{\dom(A_{s_a,s_b,min})}      \\
&= \begin{cases} T_{0,0,max}\big|_{\dom(A_{0,0,min})}, & s_a = s_b = 0, \\[1mm] 
T_{0,s_b,max}\big|_{\dom(A_{0,s_b,min})}, & s_a = 0, \,  s_b \in (0,\infty), \\[1mm] 
T_{s_a,0,max}\big|_{\dom(A_{s_a,0,min})}, & s_a \in (0,\infty), \,  s_b = 0, \\[1mm] 
T_{s_a,s_b,max}\big|_{H^1_0((a,b))}, & s_a, s_b \in (0,\infty), \\[1mm]
T_{s_a,s_b,max}, &  s_a, s_b \in [1,\infty), 
\end{cases}      \lb{4.27} 
\end{align}
and 
\begin{align}
\begin{split} 
& Q_{T_{s_a,s_b,F}} (f,g) = (\alpha_{s_a,s_b} f, \alpha_{s_a,s_b} g)_{L^2((a,b);dx)} 
+ \big(f, \big[q - \wti q\big]g\big)_{L^2((a,b);dx)},     \lb{4.28} \\ 
& f, g \in \dom(Q_{T_{s_a,s_b,F}}) = \dom(A_{s_a,s_b,min}), \quad s_a, s_b \in [0,\infty),   
\end{split} \\
& \dom\big(|T_{s_a,s_b,F}|^{1/2}\big) = \dom(Q_{T_{s_a,s_b,F}}) = \dom(A_{s_a,s_b,min})    \no \\
& \hspace*{1.8cm} = \begin{cases}
\dom(A_{0,s_b,min}), &s_a=0, \, s_b \in (0,\infty),  \\
\dom(A_{s_a,0,min}), &s_a \in (0,\infty), \, s_b = 0,  \\
H^1_0((a,b)), & s_a, s_b \in (0,\infty). 
\end{cases}     \lb{4.29} 
\end{align}
\end{theorem}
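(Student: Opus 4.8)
The plan is to reduce everything to the one-sided results already established for $\omega_{s_a}$ (i.e., relations \eqref{4.2}--\eqref{4.14}) together with the analogous statements for $\eta_{s_b}$, and then to absorb the bounded potential $q - \wti q$ as a perturbation that does not affect form domains or operator domains. First I would invoke the factorization \eqref{4.16}, $\tau_{s_a,s_b} = \alpha_{s_a,s_b}^+ \alpha_{s_a,s_b} + q - \wti q$, where $\alpha_{s_a,s_b} = \frac{d}{dx} + \phi$ with $\phi$ as in \eqref{3.76}, so that near $x=a$ the expression $\alpha_{s_a,s_b}$ agrees with $\alpha_{s_a}$ modulo a bounded term (the step function $\wti\chi$ cuts off the singularity at $b$), and near $x=b$ it agrees with $\beta_{s_b}$ modulo a bounded term. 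This locality argument, exactly as used to pass from Theorem~\ref{t3.2} to Theorem~\ref{t3.6}, immediately gives $\dom(A_{s_a,s_b,min})$ and $\dom(A_{s_a,s_b,max})$ in terms of the known one-sided domains: for $s_a,s_b\in(0,\infty)$ one gets $H^1_0((a,b))$; for $s_a=0$ (resp.\ $s_b=0$) the logarithmically weighted boundary condition \eqref{3.20} (resp.\ its $b$-analog) appears, captured by $\dom(A_{0,s_b,min})$ etc.\ from Theorem~\ref{t3.6}.

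Next I would establish the operator identities in the first displayed line. Since $A_{s_a,s_b,min} = \ol{\alpha_{s_a,s_b}|_{C_0^\infty}}$, the computation leading to \eqref{3.16} (integration by parts, now with the bounded extra terms in $\phi$) gives $A_{s_a,s_b,min}^* = -A_{-s_a-1,-s_b-1,max}$, whence $A_{s_a,s_b,min}^* A_{s_a,s_b,min}^{} = -A_{-s_a-1,-s_b-1,max}A_{s_a,s_b,min}^{}$ acts as $\alpha_{s_a,s_b}^+\alpha_{s_a,s_b}^{}$ on its natural domain. Adding the bounded self-adjoint multiplication operator $q-\wti q$ and recognizing (from the general theory in Section~\ref{s2}, or directly from the Friedrichs-extension construction applied to the closed sectorial form $(\alpha_{s_a,s_b}f,\alpha_{s_a,s_b}g) + (f,(q-\wti q)g)$ with form domain $\dom(A_{s_a,s_b,min})$) that this operator is precisely $T_{s_a,s_b,F}$, yields the first line. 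The passage $T_{s_a,s_b,F} = T_{s_a,s_b,max}|_{\dom(|T_{s_a,s_b,F}|^{1/2})} = T_{s_a,s_b,max}|_{\dom(A_{s_a,s_b,min})}$ then follows because the Friedrichs extension of a semibounded operator is the restriction of the maximal operator to the intersection of $\dom(T_{s_a,s_b,max})$ with the form domain, and the form domain of $T_{s_a,s_b,F}$ equals the form domain of $A_{s_a,s_b,min}^*A_{s_a,s_b,min}^{}$, namely $\dom(A_{s_a,s_b,min})$ (the perturbation $q-\wti q$, being bounded, leaves form domains unchanged). This simultaneously proves \eqref{4.28} and the first equality in \eqref{4.29}.

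The case-by-case list \eqref{4.27} and the second equality in \eqref{4.29} are then just bookkeeping: substitute the explicit descriptions of $\dom(A_{s_a,s_b,min})$ from Theorem~\ref{t3.6}, distinguishing the four sign regimes for $(s_a,s_b)$, and note that when both $s_a,s_b\in[1,\infty)$ the expression $\tau_{s_a,s_b}$ is in the limit point case at both endpoints (since $s_a^2,s_b^2\in[1,\infty)$, per the classification recalled after \eqref{2.27}), so $T_{s_a,s_b,min}=T_{s_a,s_b,max}$ is already self-adjoint and equals its own Friedrichs extension, giving the last line $T_{s_a,s_b,max}$ in \eqref{4.27} and establishing \eqref{4.24}. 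For \eqref{4.29} one simply reads off that $\dom(A_{s_a,s_b,min})$ equals $H^1_0((a,b))$ when $s_a,s_b>0$ and the modified domains when one index vanishes.

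The main obstacle I anticipate is not any single hard estimate — all the analytic work is already done in Sections~\ref{s3} — but rather the careful verification that the Friedrichs extension of the minimal operator $T_{s_a,s_b,min}$ genuinely coincides with the operator $A_{s_a,s_b,min}^*A_{s_a,s_b,min}^{} + q - \wti q$ and with the boundary-condition description $\{g : \wti g(a) = \wti g(b) = 0\}$ from \eqref{4.23}. One must check that the generalized boundary values $\wti g(a), \wti g(b)$ of Section~\ref{s2} are compatible with (indeed, equivalent to) membership in $\dom(A_{s_a,s_b,min})$ near each endpoint; this requires matching the principal/nonprincipal solutions $u_{a,s_a,1/2}, \hatt u_{a,s_a,1/2}$ in \eqref{2.24}--\eqref{2.27} against the asymptotics $(x-a)^{s_a+1/2}$ and $(x-a)^{1/2-s_a}$ (or the logarithmic variant at $s_a=0$) that characterize $H^1_0$-behavior versus maximal-domain behavior. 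Once that dictionary is in place — and it is essentially \eqref{2.28}--\eqref{2.31} combined with Theorem~\ref{t3.2} — the theorem follows by assembling the pieces, with the perturbation $q - \wti q$ playing no role beyond shifting the operator by a bounded self-adjoint summand.
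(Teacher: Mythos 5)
Your proposal is correct and follows essentially the same route as the paper, which proves Theorem \ref{t4.2} only implicitly: the text preceding Theorem \ref{t4.1} invokes exactly your ingredients, namely locality of the inverse square singularities at $x=a$ and $x=b$, the fact that the bounded perturbation $q-\wti q$ cannot affect operator or form domains, and the transfer of the one-endpoint relations \eqref{4.10}--\eqref{4.14} via Theorem \ref{t3.6}. Your intermediate adjoint identity $A_{s_a,s_b,min}^{*}=-A_{-s_a-1,-s_b-1,max}$ is the correct two-endpoint analog of \eqref{3.16} and is consistent with the sign convention in \eqref{4.10}, so no changes are needed.
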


\begin{remark} \lb{r4.3}
While $\wti f(a) = 0$, respectively, $\wti f(b) = 0$, represent the boundary conditions in the domain of 
$S_{s_a,F}$, $T_{s_a,s_b,F}$, etc., we want to point out that their primary role is to assure that functions $f$ in 
$\dom(S_{s_a,F})$, $\dom(T_{s_a,s_b,F})$, etc., in a neighborhood of $x=a$, respectively, $x=b$, look like the corresponding principal solution at the endpoint in question. In other words, in a neighborhood of $x=a,b$, these boundary conditions filter out the principal solution behavior and discard the behavior of nonprincipal solutions.  However, in performing this filtering procedure, the boundary conditions mask the actual behavior of elements in $\dom(S_{s_a,F})$, $\dom(T_{s_a,s_b,F})$, in the following sense: Focusing for brevity on the left endpoint $a$, the condition 
\begin{equation}
\wti f (a) = 0 \, \text{ is equivalent to } \, \begin{cases} 
\lim_{x \downarrow a} \f{f(x)}{(x-a)^{1/2} \ln(1/(x-a))} = 0, & s_a = 0, \\
\lim_{x \downarrow a} \f{f(x)}{(x-a)^{(1/2) - s_a}} = 0, & s_a \in (0,1),
\end{cases}       \lb{4.30} 
\end{equation}
and indeed discards nonprincipal solutions at $a$ which behave like (cf.\ \eqref{2.25})
\begin{equation}
\begin{cases}
(x-a)^{1/2} \ln(1/(x-a)), & s_a = 0, \\
(x-a)^{(1/2) - s_a}, & s_a \in (0,1),
\end{cases}
\end{equation}
as opposed to principal solutions at $a$ which are of the form (cf.\ \eqref{2.24})
\begin{equation}
(x-a)^{(1/2) + s_a}, \quad s_a \in [0,1),
\end{equation}
for $x$ sufficiently close to $a$. However, for $s_a \in (1/2,1)$, \eqref{4.30} is actually of the form
\begin{equation}
\lim_{x \downarrow a} (x-a)^{s_a - (1/2)}f(x) = 0, \quad s_a \in (1/2,1), 
\end{equation}
and hence does not necessarily enforce $f(0) = 0$, even though we know from \eqref{B.6} that actually 
much more holds as 
\begin{equation}
\lim_{x \downarrow a} \f{f(x)}{(x-a)^{1/2}} = 0, \quad s_a \in (0,\infty). 
\end{equation}
${}$ \hfill $\diamond$ 
\end{remark}

\section{The Krein--von Neumann Extension of $S_{s_a,min}$ and $T_{s_a,s_b,min}$} \lb{s5}

The Friedrichs and Krein--von Neumann extensions are of special importance as extremal nonnegative extensions of $S_{s_a,min}$ and $T_{s_a,s_b,min}$, respectively (assuming $T_{s_a,s_b,min} \geq 0$). In the present section we explicitly describe their Krein-von Neumann extensions utilizing Theorem \ref{t2.8} (noting that the explicit form of the corresponding Friedrichs extensions was detailed in Section \ref{s4}). 

\begin{theorem} \lb{t5.1} 
Let $s_a \in[0,\infty)$. Then $S_{s_a,min} \geq \varepsilon I$ for some $\varepsilon > 0$ and 
the Krein--von Neumann extension of $S_{s_a,min}$ is given by
\begin{align}
& S_{s_a,\b_K}f = \omega_{s_a} f,   \no \\
& f \in \dom(S_{s_a,\b_K})=\big\{g\in\dom(S_{s_a,max}) \, \big| \, \sin(\b_K)  g'(b) 
+ \cos(\b_K) g(b) = 0\big\},   \lb{4.35}  \\
& \cot(\b_K) = - (s_a+(1/2)) / (b-a), \quad \b_K \in (0,\pi),\;  s_a \in[1,\infty),   \no \\
\begin{split} 
& S_{s_a,0,R_K} f = \omega_{s_a} f,    \\
& f \in \dom(S_{s_a,0,R_K})=\bigg\{g\in\dom(S_{s_a,max}) \, \bigg| \begin{pmatrix} g(b) 
\\ g'(b) \end{pmatrix} = R_K \begin{pmatrix}
\wti g(a) \\ {\wti g}^{\, \prime}(a) \end{pmatrix} \bigg\}, \\
& \hspace*{8.95cm} s_a\in[0,1),
\end{split}    \lb{4.36}
\end{align}
where 
\begin{align}
R_{K}&=\begin{cases}
\begin{pmatrix}  \dfrac{(b-a)^{(1/2)-s_a}}{2s_a} & (b-a)^{(1/2)+s_a} \\
\left(\dfrac{1}{4s_a}-\dfrac{1}{2}\right)(b-a)^{-(1/2)-s_a} & \left(\dfrac{1}{2}+s_a\right) (b-a)^{-(1/2)+s_a}
\end{pmatrix},\\
\hfill s_a\in(0,1), \\[1mm]
\begin{pmatrix}  (b-a)^{1/2}\ln(1/(b-a)) & (b-a)^{1/2} \\
\dfrac{\ln(1/(b-a))-2}{2(b-a)^{1/2}} & \dfrac{1}{2(b-a)^{1/2}}
\end{pmatrix},\quad s_a=0.
\end{cases}    \lb{4.37}
\end{align}\end{theorem}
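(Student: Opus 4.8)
The plan is to deduce the theorem from Theorem~\ref{t2.8}, so the argument splits into two tasks: first verify the strict positivity hypothesis $S_{s_a,min} \geq \varepsilon I$, and then exhibit the principal and nonprincipal solutions of $\omega_{s_a} u = 0$ at the singular endpoint $a$ (prolonged across all of $(a,b)$) that are fed into the formulas \eqref{2.19} and \eqref{2.21}--\eqref{2.22}.

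For the strict positivity, note that by \eqref{4.4}--\eqref{4.5} the set $C_0^\infty((a,b))$ is a core for $S_{s_a,min}$, so it suffices to bound $(\omega_{s_a} f, f)_{L^2((a,b);dx)}$ from below on $C_0^\infty((a,b))$. Integration by parts (no boundary terms, $f$ compactly supported) gives $(\omega_{s_a} f, f)_{L^2((a,b);dx)} = \|f'\|_{L^2((a,b);dx)}^2 + (s_a^2 - 1/4) \int_a^b dx \, (x-a)^{-2} |f(x)|^2$. If $s_a \geq 1/2$ the coefficient $s_a^2 - 1/4$ is nonnegative, so the form dominates the Dirichlet form and $S_{s_a,min} \geq \pi^2 (b-a)^{-2} I$. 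If $s_a \in [0,1/2)$, the inequality $\sin \theta \leq \theta$ on $[0,\pi]$ gives $(x-a)^{-2} \leq \pi^2 (b-a)^{-2} \sin^{-2}(\pi (x-a)/(b-a))$, hence the form is at least $\|f'\|_{L^2((a,b);dx)}^2 - \tfrac{\pi^2}{4(b-a)^2} \int_a^b dx \, \sin^{-2}(\pi (x-a)/(b-a)) |f(x)|^2 \geq \tfrac{\pi^2}{4(b-a)^2} \|f\|_{L^2((a,b);dx)}^2$ by the refined Hardy inequality \eqref{1.4}. Thus $S_{s_a,min} \geq \tfrac{\pi^2}{4(b-a)^2} I$ for every $s_a \in [0,\infty)$.

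Next I would record the solution data. Since $\omega_{s_a}$ is an Euler-type expression with $q = 0$, the equation $\omega_{s_a} u = 0$ has on all of $(a,b)$ the solutions $(x-a)^{(1/2) \pm s_a}$ when $s_a \in (0,\infty)$, and $(x-a)^{1/2}$, $(x-a)^{1/2} \ln(1/(x-a))$ when $s_a = 0$. In each case the principal solution at $a$ is $u_a(0,\dott) = (x-a)^{(1/2) + s_a}$, with normalized nonprincipal partner $\hatt u_a(0,\dott)$ exactly as in \eqref{2.24}--\eqref{2.25} and $W(\hatt u_a(0,\dott), u_a(0,\dott)) \equiv 1$. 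Because $\omega_{s_a}$ has no singularity at $b$, that endpoint is regular, so the generalized boundary values $\wti g(b)$, $\wti g^{\, \prime}(b)$ in \eqref{2.12}--\eqref{2.13} reduce to the classical $g(b)$, $g'(b)$, and the numbers $u_a(0,b) = (b-a)^{(1/2)+s_a}$, $u_a'(0,b) = (s_a+(1/2))(b-a)^{s_a-(1/2)}$, together with the analogous values of $\hatt u_a(0,\dott)$ and its derivative, supply everything needed at $b$.

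Finally I would apply Theorem~\ref{t2.8}. For $s_a \in [1,\infty)$ one has $s_a^2 \geq 1$, so $\omega_{s_a}$ is in the limit point case at $a$ and, since $b$ is regular, in the limit circle case at $b$, that is $n_\pm(S_{s_a,min}) = 1$; the ``$\tau$ limit circle at $b$, limit point at $a$'' case of Theorem~\ref{t2.8}\,$(i)$ (cf.\ \eqref{2.19}) yields $S_{s_a,\b_K}$ with $\sin(\b_K) g'(b) + \cos(\b_K) g(b) = 0$ and $\cot(\b_K) = -u_a'(0,b)/u_a(0,b) = -(s_a+(1/2))/(b-a)$, which is \eqref{4.35}. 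For $s_a \in [0,1)$ one has $n_\pm(S_{s_a,min}) = 2$, and Theorem~\ref{t2.8}\,$(ii)$ (cf.\ \eqref{2.21}--\eqref{2.22}) gives $S_{s_a,0,R_K}$ with $R_K$ having first column $(\hatt u_a(0,b), \hatt u_a'(0,b))^\top$ and second column $(u_a(0,b), u_a'(0,b))^\top$; inserting the values from the previous step and simplifying the lower-left entry (using $(1/2 - s_a)/(2s_a) = 1/(4s_a) - 1/2$ for $s_a \in (0,1)$, and differentiating $(x-a)^{1/2}\ln(1/(x-a))$ for $s_a = 0$) reproduces \eqref{4.37}, and $\det R_K = W(\hatt u_a(0,\dott), u_a(0,\dott))(b) = 1$ serves as a consistency check. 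I expect the main obstacle to be the strict positivity for $s_a$ near $0$: there the inverse-square coefficient $s_a^2 - 1/4$ sits at (for $s_a = 0$) or just above the borderline Hardy value $-1/4$, so the classical Hardy inequality yields only $S_{0,min} \geq 0$ and one genuinely needs the refined (Avkhadiev-type) inequality \eqref{1.4} to obtain a strictly positive lower bound; everything else is routine bookkeeping with the explicit Euler solutions.
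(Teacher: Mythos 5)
Your proposal is correct, and its skeleton coincides with the paper's: both deduce Theorem \ref{t5.1} from Theorem \ref{t2.8} using the explicit Euler solutions $(x-a)^{(1/2)\pm s_a}$ (resp.\ $(x-a)^{1/2}$, $(x-a)^{1/2}\ln(1/(x-a))$) of \eqref{2.24}--\eqref{2.25}, together with the observation that the regular endpoint $b$ turns the generalized boundary values into classical ones. The two ingredients are, however, handled differently. For strict positivity, the paper first reduces to $s_a=0$ via the monotonicity $\omega_{s_a}|_{C_0^\infty}\geq\omega_0|_{C_0^\infty}$ and then quotes the lowest eigenvalue $j_{0,1}^2/(b-a)^2$ of the Friedrichs extension $S_{0,F}$ (a Bessel-function fact), whereas you argue directly on the quadratic form: Poincar\'e for $s_a\geq 1/2$, and for $s_a\in[0,1/2)$ the pointwise bound $(x-a)^{-2}\leq \pi^2(b-a)^{-2}\sin^{-2}(\pi(x-a)/(b-a))$ combined with the refined Hardy inequality \eqref{1.4}; this is sound (and in fact mirrors the technique the paper itself uses in Example \ref{eA.1} for $T_{0,0,q=0,min}$), and it yields the explicit bound $\pi^2/[4(b-a)^2]$ without invoking Bessel zeros. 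For the matrix \eqref{4.37}, the paper simply cites \cite[Example 4.1]{FGKLNS21}, while you recompute the entries of $R_K$ from \eqref{2.22} by evaluating $u_a(0,\cdot)$, $\hatt u_a(0,\cdot)$ and their derivatives at $b$; your entries check out in both the $s_a\in(0,1)$ and $s_a=0$ cases, and the consistency check $\det R_K=W(\hatt u_a(0,\cdot),u_a(0,\cdot))=1$ is exactly the verification the paper records. The only point worth stating explicitly in a final write-up is that the identification $\wti g(b)=g(b)$, ${\wti g}^{\,\prime}(b)=g'(b)$ rests on the normalization \eqref{2.26}--\eqref{2.27} with $s_b=1/2$; with that remark included, your argument is a complete, self-contained alternative to the paper's citation-based proof.
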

\begin{proof}
Let $s_a \in[0,\infty)$. We first prove that $S_{s_a,min} \geq \varepsilon I$ for some $\varepsilon > 0$. Since 
\begin{equation}
\omega_{s_a}|_{C^{\infty}_0((a,b))} \geq \omega_{0}|_{C^{\infty}_0((a,b))}, 
\end{equation}
it suffices to prove that $S_{0,min} \geq \varepsilon I$ for some $\varepsilon > 0$. Noting that the smallest eigenvalue of the Friedrichs extension, $S_{0,F}$, of $S_{0,min}$ is $j_{0,1}^2/(b-a)^2$, where $j_{\nu,k}$ is the $k$th positive zero of the Bessel function $J_\nu(\dott)$ of order $\nu\in\bbR$ $($see, e.g., \cite[Sect.~9.5]{AS72}, \cite[Ch.~XV]{Wa96}$)$ proves $S_{s_a,min} \geq \varepsilon I$ for $\varepsilon\in\big(0,j_{0,1}^2/(b-a)^2\big)$.

Let $u_{a,s_a}(0,\dott),\hatt u_{a,s_a}(0,\dott)$ be defined as in \eqref{2.24}, \eqref{2.25} respectively. Then \eqref{4.35} is an immediate result of \eqref{2.19} in Theorem \ref{t2.8} noting that by \eqref{2.24},
\begin{equation}
- \dfrac{u_{a,s_a}'(0,b)}{u_{a,s_a}(0,b)}=-\dfrac{s_a+(1/2)}{b-a},\quad s_a\in[1,\infty).
\end{equation}

Furthermore, \eqref{4.37} is a special case of the generalized Bessel operator treated in \cite[Example 4.1]{FGKLNS21} (with $\a=\b=0,\;  \g=s_a$ and $x\mapsto (x-a)$).
In particular, \eqref{2.22} yields \eqref{4.36}--\eqref{4.37}. One verifies that $\det(R_K) =1$.
\end{proof}

\begin{theorem}\lb{t5.2}
Let $s_a, s_b \in[0,\infty)$ and suppose that for some $\varepsilon > 0$, $T_{s_a,s_b,min} \geq \varepsilon I$. Denote the principal and nonprincipal solutions of $\tau_{s_a,s_b}u=0$ at $a$ and $b$ by $u_{a,s_a,s_b}(0,\dott),\hatt u_{a,s_a,s_b}(0,\dott)$ and $u_{b,s_a,s_b}(0,\dott), \hatt u_{b,s_a,s_b}(0,\dott)$, respectively. Then the Krein--von Neumann extension of $T_{s_a,s_b,min}$ is given by
\begin{align}
& T_{s_a,s_b,\a_K}f = \tau_{s_a,s_b} f,  \lb{4.40} \\
& f \in \dom(T_{s_a,s_b,\a_K})=\big\{g\in\dom(T_{s_a,s_b,max}) \, \big| \, \sin(\a_K) {\wti g}^{\, \prime}(b) 
+ \cos(\a_K) \wti g(b) = 0\big\},   \no  \\
& \cot(\a_K) = - \wti u_{b,s_a,s_b}^{\, \prime}(0,a) / \wti u_{b,s_a,s_b}(0,a), \quad \a_K \in (0,\pi),\;  s_a \in[0,1),\;  s_b\in[1,\infty),   \no \\
& T_{s_a,s_b,\b_K}f = \tau_{s_a,s_b} f,   \lb{4.41}   \\
& f \in \dom(T_{s_a,s_b,\b_K})=\big\{g\in\dom(T_{s_a,s_b,max}) \, \big| \, \sin(\b_K) {\wti g}^{\, \prime}(b) 
+ \cos(\b_K) \wti g(b) = 0\big\},   \no  \\
& \cot(\b_K) = - \wti u_{a,s_a,s_b}^{\, \prime}(0,b) / \wti u_{a,s_a,s_b}(0,b), \quad \b_K \in (0,\pi),\;  s_a \in[1,\infty),\;  s_b\in[0,1),   \no \\
& T_{s_a,s_b,K} = T_{s_a,s_b,max}, \quad s_a,s_b \in [1,\infty),\\
\begin{split} 
& T_{s_a,s_b,0,R_K} f = \tau_{s_a,s_b} f,    \\
& f \in \dom(T_{s_a,s_b,0,R_K})=\bigg\{g\in\dom(T_{s_a,s_b,max}) \, \bigg| \begin{pmatrix} \wti g(b) 
\\ {\wti g}^{\, \prime}(b) \end{pmatrix} = R_K \begin{pmatrix}
\wti g(a) \\ {\wti g}^{\, \prime}(a) \end{pmatrix} \bigg\}, \\
& \hspace*{9.15cm} s_a, s_b\in[0,1),
\end{split}  \lb{4.43}
\end{align}
where 
\begin{align}
R_K=\begin{pmatrix} \wti{\hatt u}_{a,s_a,s_b}(0,b) & \wti u_{a,s_a,s_b}(0,b)\\
\wti{\hatt u}_{a,s_a,s_b}^{\, \prime}(0,b) & \wti u_{a,s_a,s_b}^{\, \prime}(0,b)
\end{pmatrix}.  \lb{4.44}
\end{align}
\end{theorem}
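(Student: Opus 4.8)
The plan is to reduce Theorem~\ref{t5.2} to Theorem~\ref{t2.8}, exactly as the proof of Theorem~\ref{t5.1} reduced the corresponding statement for $S_{s_a,\min}$ to the same source. First I would verify the standing hypothesis of Theorem~\ref{t2.8}: the operator $T_{s_a,s_b,\min}$ is assumed strictly positive, $T_{s_a,s_b,\min}\geq\varepsilon I$, and $\tau_{s_a,s_b}$ is quasi-regular at any endpoint where $s_a<1$ (resp.\ $s_b<1$) and in the limit point case where $s_a\geq 1$ (resp.\ $s_b\geq 1$), by the classification recalled after \eqref{2.23}. Hence $n_\pm(T_{s_a,s_b,\min})\in\{0,1,2\}$ according to the three cases in the statement, and Theorem~\ref{t2.8} applies verbatim: in the case $s_a\in[0,1)$, $s_b\in[1,\infty)$ it gives \eqref{2.18}, which is exactly \eqref{4.40} once one sets $\a_K$ via $\cot(\a_K)=-\wti u_{b,s_a,s_b}^{\,\prime}(0,a)/\wti u_{b,s_a,s_b}(0,a)$; the mirror case $s_a\in[1,\infty)$, $s_b\in[0,1)$ gives \eqref{2.19}, i.e.\ \eqref{4.41}; and the case $s_a,s_b\in[0,1)$, where $n_\pm=2$, gives \eqref{2.21}--\eqref{2.22}, i.e.\ \eqref{4.43}--\eqref{4.44}.

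Next I would dispose of the doubly-limit-point case $s_a,s_b\in[1,\infty)$ separately: here $T_{s_a,s_b,\min}=T_{s_a,s_b,\max}$ is already self-adjoint by Theorem~\ref{t2.4}(ii), so it is its own (and only) nonnegative self-adjoint extension, and in particular coincides with its Krein--von Neumann extension; this is line \eqref{4.24}, and it gives $T_{s_a,s_b,K}=T_{s_a,s_b,\max}$. For the remaining cases the only substantive point is that the generalized boundary values $\wti g(a),\wti g^{\,\prime}(a),\wti g(b),\wti g^{\,\prime}(b)$ appearing in \eqref{4.40}--\eqref{4.44} are precisely the ones built in Section~\ref{s2} from the principal and nonprincipal solutions $u_{a,s_a,s_b}(0,\dott)$, $\hatt u_{a,s_a,s_b}(0,\dott)$ (and their $b$-counterparts) via \eqref{2.12}, \eqref{2.13}; since $q,\wti q\in L^\infty((a,b);dx)$, the transition from $q=0$ to general $q$ only changes lower-order remainder terms in these solutions and does not affect the leading behavior used to define the boundary values, as already noted in the passage around \eqref{2.28}--\eqref{2.31}. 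Thus the $R_K$ in \eqref{4.44} is literally the matrix \eqref{2.22} with the abstract $u_a,\hatt u_a$ replaced by $u_{a,s_a,s_b},\hatt u_{a,s_a,s_b}$, and no computation is required beyond invoking Theorem~\ref{t2.8}(ii).

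The one genuine obstacle is establishing the positivity hypothesis $T_{s_a,s_b,\min}\geq\varepsilon I$ \emph{in general}, rather than assuming it; but the statement of Theorem~\ref{t5.2} already takes this as a hypothesis (``suppose that for some $\varepsilon>0$, $T_{s_a,s_b,\min}\geq\varepsilon I$''), so in the proof I would simply note that, unlike the situation for $S_{s_a,\min}$ treated in Theorem~\ref{t5.1} where strict positivity follows from the explicit Bessel eigenvalue $j_{0,1}^2/(b-a)^2$, here $q$ is an arbitrary bounded (possibly sign-indefinite) perturbation and one cannot guarantee positivity without an assumption; once it is assumed, everything goes through. A mild secondary point worth a sentence is that in the $n_\pm=1$ cases one must confirm $\a_K,\b_K\in(0,\pi)$ rather than $=0$ or $=\pi$: this is automatic because the Krein--von Neumann extension is, by strict positivity, strictly different from the Friedrichs extension (whose boundary condition at the limit-circle endpoint is $\wti g(\cdot)=0$, i.e.\ the $\a=0$ case), so $\sin(\a_K)\neq 0$; this matches the parenthetical ranges in Theorem~\ref{t2.8}. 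Finally I would verify $\det(R_K)=1$ using $W(\hatt u_{a,s_a,s_b},u_{a,s_a,s_b})=1$ from \eqref{2.8}, exactly as at the end of the proof of Theorem~\ref{t5.1}.
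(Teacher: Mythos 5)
Your proposal is correct and follows exactly the paper's route: the paper's entire proof is the single sentence that the theorem is a direct consequence of Theorem \ref{t2.8}, and your argument simply spells out that reduction (endpoint classification, the trivial doubly-limit-point case, and the identification of the boundary values and of $R_K$ with \eqref{2.22}). The extra verifications you include (positivity taken as a hypothesis, $\a_K,\b_K\in(0,\pi)$, $\det(R_K)=1$) are consistent with, and slightly more explicit than, what the paper records.
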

\begin{proof}
A direct consequence of Theorem \ref{t2.8}. \end{proof}

For a concrete illustration of Theorem \ref{t5.2} in the special case $q = 0$ we refer to Appendix \ref{sA}.

\appendix 

\section{The example $q=0$} \lb{sA} 

In this appendix we illustrate Theorem \ref{t5.2} in the special case $q = 0$. 

We start by relating $\tau_{s_a,s_b,q=0}u = z u$ to the confluent Heun differential equation (see, e.g.,  
\cite[no.~31.12.1]{OLBC10}, \cite[Part~B]{Ro95}, \cite[Ch.~3]{SL00}),
\begin{align}
& w''(\xi)+\left(\dfrac{\g}{\xi}+\dfrac{\d}{\xi-1}+\ve\right)w'(\xi)+\dfrac{\nu \xi- \mu}{\xi(\xi-1)}w(\xi)=0,\quad \g,\d,\ve,\mu, \nu,\in\bbC,\; \xi\in(0,1).       \lb{A.0}
\end{align}
One observes that the confluent Heun differential equation has regular singularities at $\xi=0,1$ and an irregular singularity of rank 1 at $\xi=\infty$. Eliminating the first-order term $w'$ in the standard manner by introducing the change of dependent variable
\begin{align}
w \mapsto v, \quad
v(\xi)=e^{\varepsilon \xi/2}\xi^{\g/2}(\xi-1)^{\d/2}w(\xi), \; \xi \in (0,1),      \lb{A.0a}
\end{align}
transforms \eqref{A.0} into the normal form
\begin{align}
v''(\xi)+\left(A+\dfrac{B}{\xi}+\dfrac{C}{\xi-1}+\dfrac{D}{\xi^2}+\dfrac{E}{(\xi-1)^2}\right)v(\xi)=0, 
\lb{A.0b}
\end{align}
where
\begin{align}
\begin{split}
& A=- \ve^2/4 ,\quad B= [2\mu+(\d-\ve)]/2,\quad C= [2\nu-(\ve+\g)\d-2\mu]/2,\\
& D= (2-\g)\g/4,\quad E= (2-\d)\d/4.   \lb{A.0c}
\end{split}
\end{align}
To facilitate the comparison of \eqref{A.0b} with $\tau_{s_a,s_b,q=0} u = z u$, it suffices to set 
\begin{align}
&\g=1+ 2s_a,\quad \d=1- 2s_b, \quad \ve=2i(a-b)z^{1/2},     \lb{A.0d} \\
& \mu=(1/2)(1+2s_a)\big[2i(a-b)z^{1/2}+2s_b-1\big], \quad \nu=2i(a-b)z^{1/2}(1+s_a-s_b),    \no 
\end{align} 
such that the choice 
\begin{equation}
A=z, \quad B=C=0, \quad D=(1/4)-s_a^2, \quad E=(1/4)-s_b^2,     \lb{A.0e} 
\end{equation}
in \eqref{A.0b} combined with the variable changes
\begin{equation}
(0,1) \ni \xi\mapsto(x-a)/(b-a), \quad v(\xi) = u(x), \quad x \in (a,b),     \lb{A.2}
\end{equation} 
yields equivalence of \eqref{A.0b}, \eqref{A.0e}, \eqref{A.2} and $\tau_{s_a,s_b,q=0} u = z u$.

This shows that the ``two-point'' Bessel-type differential equation $\tau_{s_a,s_b,q=0} u = z u$ is a special case of the confluent Heun differential equation. These considerations extend to a constant potential term $q(x)=q_0\in\bbR$, we omit the details. 

Returning to \eqref{A.0} in the special case $\varepsilon = 0$, that is, $A=z=0$, the resulting choice 
\begin{align}
&\g=1+ 2s_a,\quad \d=1- 2s_b, \quad \ve=0,     \lb{A.2a} \\
& \mu=(1/2)(1+2s_a)(2s_b-1), \quad \nu=0,    \no 
\end{align} 
finally reduces \eqref{A.0} to the hypergeometric differential equation $($see \cite[no.~15.5.1]{AS72}$)$
\begin{align}
w''(\xi)+\left(\dfrac{1+2s_a}{\xi}+\dfrac{1-2s_b}{\xi-1}\right)w'(\xi)+\dfrac{(1+2s_a)(1-2s_b)}{2\xi(\xi-1)}w(\xi)=0,\quad \xi\in(0,1).     \lb{A.2b}
\end{align}
Thus, the solutions of $\tau_{s_a,s_b,q = 0}u=0$ can be expressed in terms of appropriate hypergeometric functions (see \eqref{4.49}--\eqref{4.52}). 

\begin{example} \lb{eA.1} 
Suppose $s_a, s_b \in [0,\infty)$ and $q = 0$. Then $T_{s_a,s_b,q=0,min} \geq c_0 I$ for some $c_0 > 0$. Indeed, since 
\begin{equation}
\tau_{s_a,s_b,q=0}|_{C^{\infty}_0((a,b))} \geq \tau_{0,0,q=0}|_{C^{\infty}_0((a,b))}, 
\end{equation}
it suffices to prove that $T_{0,0,q=0,min} \geq c_0 I$ for some $c_0 > 0$. Without loss of generality, we temporarily consider the special case $a = 0$, $b = \pi$ and recall that 
\begin{equation}
\bigg(- \f{d^2}{dx^2} - \f{1}{4 \sin^2(x)} - \f{1}{4}\bigg)\bigg|_{C_0^{\infty}((0,\pi))} \geq 0,
\end{equation}
$($see, e.g., \cite{GK85}, \cite{GMS85}, \cite{GPS21}, \cite{Sc58} and the literature referenced therein\,$)$.~The claim $T_{0,0,q=0,min} \geq c_0 I$ for some $c_0 > 0$ now follows from the fact that for appropriate $c_0 > 0$, 
\begin{equation}
x^{-2} + (\pi - x)^{-2} + c_0 < [\sin(x)]^{-2} + 1, \quad x \in (0,\pi).   \lb{A.3}
\end{equation}
To verify \eqref{A.3} one introduces
\begin{equation}
f(x) = [\sin(x)]^{-2} + 1-x^{-2} - (\pi - x)^{-2},\quad x\in(0,\pi),
\end{equation}
and notes that the minimum of $f$ on the interval $(0,\pi)$ occurs precisely at $x=\pi/2$. Hence, \eqref{A.3} holds 
for $c_0 \in\big(0,2 - \big(8/\pi^2\big)\big)$, proving $T_{s_a,s_b,q=0,min} \geq c_0 I$, with $c_0 > 0$.

Utilizing \eqref{A.0a} and \eqref{A.2a}, \eqref{A.2b}, one finds the following explicit expressions for the principal and nonprincipal solutions of $\tau_{s_a,s_b,q=0} u = 0$:  
\begin{align}
&u_{a,s_a,s_b}(0, x; q = 0)= (b-a)^{s_b-(1/2)}(x-a)^{(1/2) + s_a}(b-x)^{(1/2)-s_b}   \no\\
& \times F((1/2)+s_a-s_b+\sigma_{s_a,s_b},(1/2)+s_a-s_b-\sigma_{s_a,s_b};1+2s_a;(x-a)/(b-a)),   \no \\
& \hspace*{9.5cm} s_a \in [0,1), \lb{4.49}   \\
&\hatt u_{a,s_a,s_b}(0, x; q = 0) = \begin{cases} (2 s_a)^{-1}(b-a)^{s_b-(1/2)} (x-a)^{(1/2) - s_a}(b-x)^{(1/2)-s_b}    \\
\times F((1/2)-s_a-s_b+\sigma_{s_a,s_b},(1/2)-s_a-s_b-\sigma_{s_a,s_b};  \\
\qquad\;\; 1-2s_a;(x-a)/(b-a)), \quad s_a \in (0,1), \\
(b-a)^{s_b-(1/2)}(x-a)^{1/2}\ln(1/(x-a))(b-x)^{(1/2)-s_b}   \\
\times F((1/2)-s_b+\sigma_{0,s_b},(1/2)-s_b-\sigma_{0,s_b};1;\\
\qquad\;\; (x-a)/(b-a)), \quad s_a =0, \end{cases} \no \\
& \hspace*{9.5cm} s_b \in [0,1),   \lb{4.50} \\
&u_{b,s_a,s_b}(0, x; q = 0)= (b-a)^{s_a-(1/2)}(x-a)^{(1/2) -s_a}(b-x)^{(1/2)+s_b}   \no\\
& \times F((1/2)-s_a+s_b+\sigma_{s_a,s_b},(1/2)-s_a+s_b-\sigma_{s_a,s_b};1+2s_b;(b-x)/(b-a)),   \no \\
& \hspace*{9.5cm} s_b \in [0,1), \lb{4.51}  \\
&\hatt u_{b,s_a,s_b}(0, x; q = 0) = \begin{cases} -(2 s_b)^{-1}(b-a)^{s_a-(1/2)} (x-a)^{(1/2) - s_a}(b-x)^{(1/2)-s_b}    \\
\times F((1/2)-s_a-s_b+\sigma_{s_a,s_b},(1/2)-s_a-s_b-\sigma_{s_a,s_b};  \\
\qquad\;\; 1-2s_b;(b-x)/(b-a)), \quad s_b \in (0,1), \\
(b-a)^{s_a-(1/2)}(x-a)^{(1/2)-s_a}(b-x)^{1/2} \ln(1/(b-x))   \\
\times F((1/2)-s_a+\sigma_{s_a,0},(1/2)-s_a-\sigma_{s_a,0};1;\\
\qquad\;\; (b-x)/(b-a)), \quad s_b =0, \end{cases} \no \\
& \hspace*{9.5cm} s_a \in [0,1),  \lb{4.52}
\end{align}
where
\begin{align}
\sigma_{s_a,s_b}=(1/2)\big(4 s_a^2+4 s_b^2-1\big)^{1/2}.
\end{align}
Here $F(\dott,\dott;\dott;\dott)$ denotes the hypergeometric function $($see, e.g., \cite[Ch.~15]{AS72}$)$.
Recalling Gauss's identity $($cf.\ \cite[no.~15.1.20]{AS72}$)$
\begin{equation}
F(\alpha,\beta;\gamma;1) = \f{\Gamma(\gamma) \Gamma(\gamma - \alpha - \beta)}{\Gamma(\gamma - \alpha) \Gamma(\gamma - \beta)}, \quad \gamma \in\bbC \backslash \{ - \bbN_0\}, 
\; \Re(\gamma - \alpha - \beta) > 0,     \lb{A.10} 
\end{equation}
$($here $\gamma$ is not to be confused with $\gamma$ in \eqref{A.0}$)$  
it is easily verified that the two $F(\dott,\dott;\dott;1)$ in \eqref{4.49} and \eqref{4.50} obtained as $x$ approaches $b$ exist for $s_b>0$. Similarly, the two $F(\dott,\dott;\dott;1)$ in \eqref{4.51} and \eqref{4.52} obtained as $x$ approaches $a$ exist for $s_a>0$. Furthermore, $\sigma_{s_a,s_b}$ is strictly imaginary whenever $s_a^2+s_b^2<1/4$, real-valued for $s_a^2+s_b^2 \geq 1/4$, and for the choices of $\alpha, \beta, \gamma$ in \eqref{A.10} with values of 
$\alpha, \beta, \gamma$ taken from \eqref{4.49}--\eqref{4.52} one has $\gamma, (\gamma - \alpha - \beta) \in \bbR$ and $\ol{\gamma - \alpha} = \gamma - \beta$ whenever $s_a^2+s_b^2<1/4$. Thus, noting that 
$\Gamma(\ol{z})=\ol{\Gamma(z)}$, one confirms that each $F(\dott,\dott;\dott;1)$ in \eqref{4.49}--\eqref{4.52} is real-valued, as expected.

Furthermore, one notes that $u_{a,s_a,s_b}(0,\dott;q = 0)$ and $u_{b,s_a,s_b}(0,\dott;q = 0)$ are in $L^2((a,b); dx)$ for $s_a\in[0,\infty)$ and $s_b\in[0,\infty)$, respectively, hence \eqref{4.49} and \eqref{4.51} yield the principal solutions
\begin{align}
&u_{a,s_a,s_b}(0, x; q = 0)= (b-a)^{s_b-(1/2)}(x-a)^{(1/2) + s_a}(b-x)^{(1/2)-s_b}   \no\\
& \times F((1/2)+s_a-s_b+\sigma_{s_a,s_b},(1/2)+s_a-s_b-\sigma_{s_a,s_b};1+2s_a;(x-a)/(b-a)),  \no \\
& \hspace*{7.3cm} s_a \in [0,\infty),\;  s_b \in [0,1),   \lb{4.55}\\
&u_{b,s_a,s_b}(0, x; q = 0)= (b-a)^{s_a-(1/2)}(x-a)^{(1/2) -s_a}(b-x)^{(1/2)+s_b}   \no\\
& \times F((1/2)-s_a+s_b+\sigma_{s_a,s_b},(1/2)-s_a+s_b-\sigma_{s_a,s_b};1+2s_b;(b-x)/(b-a)),   \no \\
& \hspace*{7.3cm} s_a \in [0,1),\;  s_b \in [0,\infty).   \lb{4.56}
\end{align}

We begin by focusing on the leading behavior of these principal solutions in order to compute the analog of \eqref{4.40} and \eqref{4.41} for this example. Employing the connection formula found in \cite[no.~15.3.6]{AS72} yields the leading behavior
\begin{align}
&u_{a,s_a,s_b}(0, x; q=0) \underset{x \uparrow b}{=}\dfrac{(b-a)^{s_a+ s_b}(b-x)^{(1/2) - s_b}\Gamma(1+2s_a)\Gamma(2s_b)}{\Gamma((1/2)+s_a+s_b+\sigma_{s_a,s_b})\Gamma((1/2)+s_a+s_b-\sigma_{s_a,s_b})}   \no \\
& \hspace*{3.4cm} \quad \times (1+\Oh(b-x))   \no \\
& \hspace*{3.4cm} +\dfrac{(b-a)^{ s_a-s_b}(b-x)^{(1/2) + s_b}\Gamma(1+2s_a)\Gamma(-2s_b)}{\Gamma((1/2)+s_a-s_b+\sigma_{s_a,s_b})\Gamma((1/2)+s_a-s_b-\sigma_{s_a,s_b})}    \no \\
& \hspace*{3.4cm} \quad \times (1+\Oh(b-x)), \quad s_a \in [0,\infty),\;  s_b \in (0,1),   \\
&u_{b,s_a,s_b}(0, x; q=0)\underset{x \downarrow a}{=}\dfrac{(b-a)^{s_a+ s_b}(x-a)^{(1/2) - s_a}\Gamma(1+2s_b)\Gamma(2s_a)}{\Gamma((1/2)+s_a+s_b+\sigma_{s_a,s_b})\Gamma((1/2)+s_a+s_b-\sigma_{s_a,s_b})}   \no \\
& \hspace*{3.4cm} \quad \times (1+\Oh(x-a))   \no \\
& \hspace*{3.4cm} +\dfrac{(b-a)^{s_b-s_a}(x-a)^{(1/2) + s_a}\Gamma(1+2s_b)\Gamma(-2s_a)}{\Gamma((1/2)-s_a+s_b+\sigma_{s_a,s_b})\Gamma((1/2)-s_a+s_b-\sigma_{s_a,s_b})}    \no \\
& \hspace*{3.4cm} \quad \times (1+\Oh(x-a)), \quad s_a \in (0,1),\;  s_b \in [0,\infty).
\end{align}
For $s_b=0$ in \eqref{4.55} and $s_a=0$ in \eqref{4.56}, one instead uses \cite[no.~15.3.10]{AS72} to find the leading behavior
\begin{align}
&u_{a,s_a,0}(0, x; q=0) \underset{x \uparrow b}{=}\dfrac{(b-a)^{s_a}(b-x)^{1/2}\Gamma(1+2s_a)}{\Gamma((1/2)+s_a+\sigma_{s_a,0})\Gamma((1/2)+s_a-\sigma_{s_a,0})} [\ln(b-a)-2\gamma_{E} \no \\
&\hspace*{3.3cm}-\psi((1/2)+s_a+\sigma_{s_a,0})-\psi((1/2)+s_a-\sigma_{s_a,0})-\ln(b-x)]  \no \\
& \hspace*{3.3cm} \quad \times (1+\Oh(b-x)), \quad s_a \in [0,\infty),\\
&u_{b,0,s_b}(0, x; q=0)\underset{x \downarrow a}{=}\dfrac{(b-a)^{s_b}(x-a)^{1/2}\Gamma(1+2s_b)}{\Gamma((1/2)+s_b+\sigma_{0,s_b})\Gamma((1/2)+s_b-\sigma_{0,s_b})} [\ln(b-a)-2\gamma_{E} \no \\
&\hspace*{3.3cm}-\psi((1/2)+s_b+\sigma_{0,s_b})-\psi((1/2)+s_b-\sigma_{0,s_b})-\ln(x-a)]  \no \\
& \hspace*{3.3cm} \quad \times (1+\Oh(x-a)), \quad s_b \in [0,\infty).
\end{align}
Here $\psi(\dott) = \Gamma'(\dott)/\Gamma(\dott)$ is the Digamma function and $\gamma_{E} = - \psi(1) = 0.57721\dots$ represents Euler's constant $($see, e.g., \cite[Ch.~6]{AS72}$)$. Hence using \eqref{2.28}--\eqref{2.31} one readily computes
\begin{align}
&\wti u_{a,s_a,s_b}(0, b; q=0)= \dfrac{(b-a)^{s_a+ s_b}\Gamma(1+2s_a)\Gamma(1+2s_b)}{\Gamma((1/2)+s_a+s_b+\sigma_{s_a,s_b})\Gamma((1/2)+s_a+s_b-\sigma_{s_a,s_b})},   \no \\
& \hspace*{7.3cm} s_a \in [0,\infty), \; s_b \in [0,1),   \lb{4.61}  \\
&\wti u^{\, \prime}_{a,s_a,s_b}(0, b; q=0) =
\begin{cases}\dfrac{-(b-a)^{s_a-s_b}\Gamma(1+2s_a)\Gamma(-2s_b)}{\Gamma((1/2)+s_a-s_b+\sigma_{s_a,s_b})\Gamma((1/2)+s_a-s_b-\sigma_{s_a,s_b})},  \\
\hspace*{4.8cm} s_a \in [0,\infty), \; s_b\in(0,1),   \\
\dfrac{-(b-a)^{s_a}\Gamma(1+2s_a)}{\Gamma((1/2)+s_a+\sigma_{s_a,0})\Gamma((1/2)+s_a-\sigma_{s_a,0})}[\ln(b-a)   
\\
-2\gamma_{E}-\psi((1/2)+s_a+\sigma_{s_a,0})-\psi((1/2)+s_a-\sigma_{s_a,0})],\\
\hfill s_a \in [0,\infty), \; s_b=0,
\end{cases}  \lb{4.62}   \\
&\wti u_{b,s_a,s_b}(0, a; q=0)= \dfrac{(b-a)^{s_a+ s_b}\Gamma(1+2s_a)\Gamma(1+2s_b)}{\Gamma((1/2)+s_a+s_b+\sigma_{s_a,s_b})\Gamma((1/2)+s_a+s_b-\sigma_{s_a,s_b})} ,   \no \\
& \hspace*{7.3cm} s_a \in [0,1),\; s_b \in [0,\infty),  \lb{4.63} \\
&\wti u^{\, \prime}_{b,s_a,s_b}(0, a; q=0)=
\begin{cases}\dfrac{(b-a)^{s_b-s_a}\Gamma(-2s_a)\Gamma(1+2s_b)}{\Gamma((1/2)-s_a+s_b+\sigma_{s_a,s_b})\Gamma((1/2)-s_a+s_b-\sigma_{s_a,s_b})}, \\
\hfill s_a \in (0,1), \; s_b \in [0,\infty), \\
\dfrac{(b-a)^{s_b}\Gamma(1+2s_b)}{\Gamma((1/2)+s_b+\sigma_{0,s_b})\Gamma((1/2)+s_b-\sigma_{0,s_b})}[\ln(b-a)   \\
-2\gamma_{E}-\psi((1/2)+s_b+\sigma_{0,s_b})-\psi((1/2)+s_b-\sigma_{0,s_b})],\\
\hfill  s_a=0, \; s_b \in [0,\infty), 
\end{cases}  \lb{4.64} 
\end{align}
where the cases $s_b=1/2$ in \eqref{4.62} and $s_a=1/2$ in \eqref{4.64} are understood as limits. In particular, repeatedly employing $\Gamma(1+z)=z\Gamma(z)$, one obtains, 
\begin{align}
&\wti u^{\, \prime}_{a,s_a,s_b}(0, b; q=0)=\dfrac{(b-a)^{s_a-s_b}\Gamma(2+2s_a)\Gamma(2-2s_b)}{4s_b\Gamma((3/2)+s_a-s_b+\sigma_{s_a,s_b})\Gamma((3/2)+s_a-s_b-\sigma_{s_a,s_b})},   \no \\
& \hspace*{7.3cm} s_a \in [0,\infty),\;  s_b \in (0,1), \lb{4.65}  \\
&\wti u^{\, \prime}_{b,s_a,s_b}(0, a; q=0)= \dfrac{-(b-a)^{s_b-s_a}\Gamma(2-2s_a)\Gamma(2+2s_b)}{4s_a\Gamma((3/2)-s_a+s_b+\sigma_{s_a,s_b})\Gamma((3/2)-s_a+s_b-\sigma_{s_a,s_b})},   \no \\
& \hspace*{7.3cm} s_a \in (0,1),\;  s_b \in [0,\infty).  \lb{4.66} 
\end{align}
Using \eqref{4.61}--\eqref{4.66} one computes for $\alpha_K(q=0),\beta_K(q=0)$ in \eqref{4.40} and \eqref{4.41}, \begin{align}
& \cot(\alpha_K(q=0))    \no \\
& \quad =
\begin{cases}\dfrac{\Gamma(2-2s_a)\Gamma((1/2)+s_a+s_b+\sigma_{s_a,s_b})\Gamma((1/2)+s_a+s_b-\sigma_{s_a,s_b})}{\Gamma(1+2s_a)\Gamma((3/2)-s_a+s_b+\sigma_{s_a,s_b})\Gamma((3/2)-s_a+s_b-\sigma_{s_a,s_b})}  \\[3mm]
\quad \times\dfrac{(1+2s_b)}{4s_a(b-a)^{2s_a}}, \quad s_a \in (0,1),    \\[2mm]
2\gamma_{E}+\psi((1/2)+s_b+\sigma_{0,s_b})+\psi((1/2)+s_b-\sigma_{0,s_b})-\ln(b-a),  \\
\hspace*{8.6cm} s_a=0,
\end{cases}  \no \\
&\hspace*{9.05cm}  s_b \in [1,\infty),  \lb{4.67}  \\
&\cot(\beta_K(q=0))    \no \\ 
& \quad =
\begin{cases}\dfrac{\Gamma(2-2s_b)\Gamma((1/2)+s_a+s_b+\sigma_{s_a,s_b})\Gamma((1/2)+s_a+s_b-\sigma_{s_a,s_b})}{\Gamma(1+2s_b)\Gamma((3/2)+s_a-s_b+\sigma_{s_a,s_b})\Gamma((3/2)+s_a-s_b-\sigma_{s_a,s_b})}  \\[3mm]
\quad \times\dfrac{-(1+2s_a)}{4s_b(b-a)^{2s_b}}, \quad  s_b \in (0,1),   \\[2mm]
\ln(b-a)-2\gamma_{E}-\psi((1/2)+s_a+\sigma_{s_a,0})-\psi((1/2)+s_a-\sigma_{s_a,0}),  \\
\hfill s_b=0,
\end{cases}   \no   \\
&\hspace*{9.2cm}  s_a \in [1,\infty).   \lb{4.68}
\end{align}
Moreover, \eqref{4.68} recovers \eqref{4.35} whenever $s_b=1/2$, while $s_a=1/2$ in \eqref{4.67} implies the analog of \eqref{4.35} with the endpoints $a$ and $b$ interchanged.

Next, we turn to the case when both endpoints are in the limit circle case, that is, whenever $s_a,s_b\in[0,1)$. Similarly to before, \cite[no.~15.3.6, 15.3.10]{AS72} show the leading behavior of the nonprincipal solution $\hatt u_{a,s_a,s_b}(0,\dott; q=0)$ is given by
\begin{align}
& \hatt u_{a,s_a,s_b}(0,x; q=0)   \no \\
& \quad \underset{x\uparrow b}{=}
\begin{cases}
\dfrac{-(b-a)^{s_b - s_a}(b-x)^{(1/2) - s_b}\Gamma(-2s_a)\Gamma(2s_b)}{\Gamma((1/2)-s_a+s_b+\sigma_{s_a,s_b})\Gamma((1/2)-s_a+s_b-\sigma_{s_a,s_b})} \\[3mm]
\qquad \times (1+\Oh(b-x))\\[1mm]
\quad -\dfrac{(b-a)^{-s_a-s_b}(b-x)^{(1/2) + s_b}\Gamma(-2s_a)\Gamma(-2s_b)}{\Gamma((1/2)-s_a-s_b+\sigma_{s_a,s_b})\Gamma((1/2)-s_a-s_b-\sigma_{s_a,s_b})}\\[3mm]
\qquad \times (1+\Oh(b-x)),\quad s_a,s_b\in(0,1),  \\[2mm]
\dfrac{\ln(1/(b-a))(b-a)^{s_b}(b-x)^{(1/2) - s_b}\Gamma(2s_b)}{\Gamma((1/2)+s_b+\sigma_{0,s_b})\Gamma((1/2)+s_b-\sigma_{0,s_b})}(1+\Oh(b-x))  \\[3mm]
\quad +\dfrac{\ln(1/(b-a))(b-a)^{-s_b}(b-x)^{(1/2) + s_b}\Gamma(-2s_b)}{\Gamma((1/2)-s_b+\sigma_{s_b})\Gamma((1/2)-s_b-\sigma_{0,s_b})}\\[3mm]
\qquad \times (1+\Oh(b-x)),\quad s_a=0,\;  s_b\in(0,1),  \\[2mm]
\dfrac{-(b-a)^{-s_a}(b-x)^{1/2}\Gamma(-2s_a)}{\Gamma((1/2)-s_a+\sigma_{s_a,0})\Gamma((1/2)-s_a-\sigma_{s_a,0})} [\ln(b-a)-2\gamma_{E}\\[3mm]
\ \ -\psi((1/2)-s_a+\sigma_{s_a,0})-\psi((1/2)-s_a-\sigma_{s_a,0})-\ln(b-x)] \\
\qquad \times (1+\Oh(b-x)), \quad s_a\in(0,1),\;  s_b=0,  \\[2mm]
\dfrac{\ln(1/(b-a))(b-x)^{1/2}}{\Gamma((1+i)/2)\Gamma((1-i)/2)}[\ln(b-a)-2\gamma_{E}-\psi((1+i)/2)  \\[3mm]
\quad -\psi((1-i)/2)-\ln(b-x)](1+\Oh(b-x)),\quad s_a=s_b=0.
\end{cases}
\end{align}
Hence one computes the generalized boundary values $($employing $\Gamma(1+z)=z\Gamma(z)$$)$,
\begin{align}
& \wti{\hatt u}_{a,s_a,s_b}(0,b;q=0)    \no \\ 
& \quad = \begin{cases}
\dfrac{(b-a)^{s_b - s_a}\Gamma(2-2s_a)\Gamma(2+2s_b)}{4s_a\Gamma((3/2)-s_a+s_b+\sigma_{s_a,s_b})\Gamma((3/2)-s_a+s_b-\sigma_{s_a,s_b})},  \\[1mm]
\hspace*{5.45cm} s_a\in(0,1),\;  s_b\in[0,1),  \\[2mm]
\dfrac{\ln(1/(b-a))(b-a)^{s_b}\Gamma(1+2s_b)}{\Gamma((1/2)+s_b+\sigma_{0,s_b})\Gamma((1/2)+s_b-\sigma_{0,s_b})}, \quad s_a=0, \; s_b\in[0,1),
\end{cases}   \lb{4.70}  \\
& \wti{\hatt u}^{\, \prime}_{a,s_a,s_b}(0,b;q=0)    \no \\
& \quad = \begin{cases}
\dfrac{(b-a)^{-s_a-s_b}\Gamma(2-2s_a)\Gamma(2-2s_b)}{8s_a s_b \Gamma((3/2)-s_a-s_b+\sigma_{s_a,s_b})\Gamma((3/2)-s_a-s_b-\sigma_{s_a,s_b})},\\[1mm]
\hspace*{7.05cm} s_a,s_b\in(0,1),  \\[2mm]
\dfrac{\ln(1/(b-a))(b-a)^{-s_b}\Gamma(2-2s_b)}{4s_b\Gamma((3/2)-s_b+\sigma_{s_b})\Gamma((3/2)-s_b-\sigma_{0,s_b})}, \quad s_a=0,\;  s_b\in(0,1),  \\[2mm]
\dfrac{(b-a)^{-s_a}\Gamma(2-2s_a)}{4s_a\Gamma((3/2)-s_a 
+\sigma_{s_a,0})\Gamma((3/2)-s_a-\sigma_{s_a,0})} [\ln(b-a)-2\gamma_{E}\\[3mm]
\quad -\psi((1/2)-s_a+\sigma_{s_a,0})-\psi((1/2)-s_a-\sigma_{s_a,0})], \quad s_a\in(0,1),\;  s_b=0,  \\[2mm]
\dfrac{\ln(b-a)}{\Gamma((1+i)/2)\Gamma((1-i)/2)}[\ln(b-a)-2\gamma_{E}-\psi((1+i)/2)\\[3mm]
\quad -\psi((1-i)/2)], \quad s_a=s_b=0.
\end{cases}  \lb{4.71}
\end{align}

Finally, we define the Krein--von Neumann extension whenever $s_a,s_b\in[0,1)$ in \eqref{4.43} by choosing
\begin{align}
R_K(q=0) = \begin{pmatrix} \wti{\hatt u}_{a,s_a,s_b}(0,b; q=0) & \wti u_{a,s_a,s_b}(0,b;q=0)\\
\wti{\hatt u}_{a,s_a,s_b}^{\, \prime}(0,b;q=0) & \wti u_{a,s_a,s_b}^{\, \prime}(0,b;q=0)
\end{pmatrix},\quad s_a,s_b\in[0,1),  \lb{4.72}
\end{align}
in \eqref{4.44} with the generalized boundary values given by \eqref{4.61}, \eqref{4.62}, \eqref{4.65}, \eqref{4.70}, and \eqref{4.71}.
Moreover, \eqref{4.72} recovers \eqref{4.37} whenever $s_b=1/2$ and gives the analog of \eqref{4.37} with the endpoint behavior interchanged whenever $s_a=1/2$. 
\end{example}

\section{On Hardy-Type Inequalities} \lb{sB} 

Recalling the differential expressions $\alpha_{s_a}$, $\alpha^+_{s_a}$ in \eqref{3.1}, \eqref{3.2}, we follow the Hardy inequality considerations in \cite{GP79}, \cite{Ka72}, \cite{KW72} to obtain the following basic facts.

\begin{lemma} \lb{lB.1} 
Suppose $f \in AC_{loc}((a,b))$, $\alpha_{s_a} f \in L^2((a,b); dx)$ for some $s_a \in\bbR$, and 
$a < r_0 < r_1 \leq b < R < \infty$. Then, 
\begin{align}
& \int_{r_0}^{r_1} dx \, |(\alpha_{s_a} f)(x)|^2 \geq s_a^2 \int_{r_0}^{r_1} dx \, \f{|f(x)|^2}{(x-a)^2} + 
\f{1}{4} \int_{r_0}^{r_1} dx \, \f{|f(x)|^2}{(x-a)^2 [\ln(R/(x-a))]^2}   \no \\
& \hspace*{3.25cm} - s_a \f{|f(x)|^2}{(x-a)}\bigg|_{x = r_0}^{r_1} - \f{|f(x)|^2}{2 (x-a) [\ln(R/(x-a))]}\bigg|_{x = r_0}^{r_1},   \lb{B.1} \\
& \int_{r_0}^{r_1} dx \, (x-a) \ln(R/(x-a)) \bigg|\bigg[\f{f(x)}{(x-a)^{1/2} [\ln(R/(x-a))]^{1/2}}\bigg]'\bigg|^2     \no \\
& \quad = \int_{r_0}^{r_1} dx \, \bigg[|f'(x)|^2 - \f{|f(x)|^2}{4 (x-a)^2} 
- \f{|f(x)|^2}{4 (x-a)^2 [\ln(R/(x-a))]^2}\bigg]     \lb{B.3} \\
& \qquad - \f{|f(x)|^2}{2(x-a)}\bigg|_{x = r_0}^{r_1} + \f{|f(x)|^2}{2(x-a) \ln(R/(x-a))}\bigg|_{x = r_0}^{r_1} \geq 0,   \no \\
\begin{split} 
& \int_{r_0}^{r_1} dx \, |(\alpha_{s_a} f)(x)|^2 =  \int_{r_0}^{r_1} dx \, \bigg[|f'(x)|^2 
+ \big[s_a^2 - (1/4)\big] \f{|f(x)|^2}{(x-a)^2}\bigg]     \lb{B.4} \\
& \hspace*{3.4cm} - [s_a + (1/2)] \f{|f(x)|^2}{(x-a)}\bigg|_{x = r_0}^{r_1} \geq 0.    
\end{split} 
\end{align}
If $s_a = 0$, then 
\begin{equation}
\int_a^{r_1} dx \, \f{|f(x)|^2}{(x-a)^2 [\ln(R/(x-a))]^2} < \infty, \quad  
\lim_{x \downarrow a} \f{|f(x)|}{[(x-a) \ln(R/(x-a))]^{1/2}} = 0.    \lb{B.5} 
\end{equation}
If $s_a \in (0,\infty)$, then  
\begin{equation}
\int_a^{r_1} dx \, |f'(x)|^2 < \infty, \quad \int_a^{r_1} dx \, \f{|f(x)|^2}{(x-a)^2} < \infty, \quad 
\lim_{x \downarrow a} \f{|f(x)|}{(x-a)^{1/2}} = 0,     \lb{B.6} 
\end{equation}
in particular, for $0 < \varepsilon$ sufficiently small, 
\begin{equation}
f \wti \chi_{[a,a+\varepsilon]} \in H^1_0((a,r_1)).
\end{equation}
\end{lemma}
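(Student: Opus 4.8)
The plan is to prove the three identities \eqref{B.1}, \eqref{B.3}, \eqref{B.4} first, on a fixed compact subinterval $[r_0,r_1]\subset(a,b]$, where everything is legitimate: $f$ is continuous hence bounded, $(x-a)^{-1}$ is bounded there, and $\alpha_{s_a}f\in L^2$, so $f$, $f'$, $|f|^2/(x-a)$ and $|f|^2/(x-a)^2$ all lie in $L^2((r_0,r_1);dx)$. For \eqref{B.4} I would expand $|(\alpha_{s_a}f)(x)|^2=|f'(x)|^2-[s_a+(1/2)](x-a)^{-1}(|f|^2)'(x)+[s_a+(1/2)]^2(x-a)^{-2}|f(x)|^2$, integrate the cross term by parts via $\int_{r_0}^{r_1}(x-a)^{-1}(|f|^2)'\,dx=(x-a)^{-1}|f|^2\big|_{x=r_0}^{r_1}+\int_{r_0}^{r_1}(x-a)^{-2}|f|^2\,dx$, and observe that $[s_a+(1/2)]^2-[s_a+(1/2)]=s_a^2-(1/4)$; this produces \eqref{B.4}, whose left-hand side is trivially nonnegative.

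For \eqref{B.3} the key device is the factorization $f=w\,g$ with the weight $w(x)=(x-a)^{1/2}[\ln(R/(x-a))]^{1/2}$, smooth and positive on $(a,r_1]$ since $R>b\ge r_1$ forces $\ln(R/(x-a))\ge\ln(R/(b-a))>0$ there. Expanding $|f'|^2=(w')^2|g|^2+w\,w'(|g|^2)'+w^2|g'|^2$, integrating the middle term by parts, and using $|g|^2=|f|^2/w^2$, one reduces the claim to $\int_{r_0}^{r_1}(x-a)\ln(R/(x-a))|g'|^2=\int_{r_0}^{r_1}|f'|^2+\int_{r_0}^{r_1}(w''/w)|f|^2-(w'/w)|f|^2\big|_{x=r_0}^{r_1}$ (note $g'$ is precisely the quantity differentiated in \eqref{B.3}), hence to the elementary evaluations $w'/w=\f1{2(x-a)}-\f1{2(x-a)\ln(R/(x-a))}$ and $w''/w=-\f1{4(x-a)^2}-\f1{4(x-a)^2[\ln(R/(x-a))]^2}$, which I would get by logarithmic differentiation of $w^2=(x-a)\ln(R/(x-a))$; nonnegativity of the left-hand side gives the last inequality in \eqref{B.3}. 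Then \eqref{B.1} follows by substituting into \eqref{B.4} the lower bound for $\int_{r_0}^{r_1}|f'|^2$ obtained by dropping the nonnegative left-hand side of \eqref{B.3}, and collecting terms, the coefficients working out as $(1/4)+[s_a^2-(1/4)]=s_a^2$ on $|f|^2/(x-a)^2$ and $(1/2)-[s_a+(1/2)]=-s_a$ on the $|f|^2/(x-a)$ boundary term.

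For \eqref{B.5} and \eqref{B.6} the genuine analytic content is the behaviour of $f$ at $a$, which I would extract from the first-order equation rather than from the identities. Writing $h:=\alpha_{s_a}f\in L^2((a,b);dx)$, integration of $[(x-a)^{-s_a-(1/2)}f]'=(x-a)^{-s_a-(1/2)}h$ over $(x,c)$ with $a<x<c\le r_1$ gives $(x-a)^{-s_a-(1/2)}f(x)=(c-a)^{-s_a-(1/2)}f(c)-\int_x^c(t-a)^{-s_a-(1/2)}h(t)\,dt$, and a Cauchy--Schwarz bound on the integral (cf.\ the proof of Theorem \ref{t3.2}) gives, for $s_a=0$, $\f{|f(x)|}{[(x-a)\ln(R/(x-a))]^{1/2}}\le\f{|(c-a)^{-1/2}f(c)|}{[\ln(R/(x-a))]^{1/2}}+\Big(\f{\ln((c-a)/(x-a))}{\ln(R/(x-a))}\Big)^{1/2}\|h\|_{L^2((a,c);dt)}$, so letting $x\downarrow a$ (first term $\to0$, ratio $\to1$) and then $c\downarrow a$ yields the limit in \eqref{B.5}; for $s_a>0$, $\int_x^c(t-a)^{-2s_a-1}\,dt=(2s_a)^{-1}[(x-a)^{-2s_a}-(c-a)^{-2s_a}]$ gives, after multiplying by $(x-a)^{s_a}$, $(x-a)^{-1/2}|f(x)|\le(x-a)^{s_a}|(c-a)^{-s_a-(1/2)}f(c)|+(2s_a)^{-1/2}\|h\|_{L^2((a,c);dt)}$, and the same double limit yields the limit in \eqref{B.6}. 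I would then feed this boundary behaviour back into \eqref{B.1}: its boundary contributions at $x=r_0$ are bounded by multiples of $|f(r_0)|^2/(r_0-a)$ and $|f(r_0)|^2/[(r_0-a)\ln(R/(r_0-a))]$, both of which tend to $0$ as $r_0\downarrow a$ by the limits just established (for $s_a=0$ only the second appears, and $s_a^2=0$ removes the $|f|^2/(x-a)^2$ term), so letting $r_0\downarrow a$ and invoking monotone convergence on the left gives $\int_a^{r_1}|f(x)|^2/\big((x-a)^2[\ln(R/(x-a))]^2\big)\,dx<\infty$ in all cases and, when $s_a>0$, also $\int_a^{r_1}|f(x)|^2/(x-a)^2\,dx<\infty$; for the remaining assertion of \eqref{B.6}, $f'=\alpha_{s_a}f+[s_a+(1/2)](x-a)^{-1}f\in L^2((a,r_1);dx)$, so $\int_a^{r_1}|f'|^2\,dx<\infty$, and since $|f(x)|(x-a)^{-1/2}\to0$ forces $f(x)\to0$, the function $f$ extends continuously with $f(a)=0$ and lies in $H^1$ near $a$; multiplying by $\wti \chi_{[a,a+\varepsilon]}$ with $\varepsilon$ small enough that $a+2\varepsilon<r_1$ yields an $H^1((a,r_1))$ function vanishing at both endpoints, i.e.\ $f\,\wti \chi_{[a,a+\varepsilon]}\in H^1_0((a,r_1))$.

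The step I expect to be the main obstacle is the bookkeeping through the chain \eqref{B.4}$\to$\eqref{B.3}$\to$\eqref{B.1}: keeping every sign and every boundary term straight, and in particular pinning down $w''/w$ exactly, since all the force of \eqref{B.3}, and hence of \eqref{B.1}, rests on that quantity coming out to precisely $-\f1{4(x-a)^2}-\f1{4(x-a)^2[\ln(R/(x-a))]^2}$. A related subtlety worth flagging is that the weight $(x-a)^{-2}[\ln(R/(x-a))]^{-2}$ is \emph{not} integrable at $a$, so the finiteness statement in \eqref{B.5} is genuine information that can be read off from \eqref{B.1} only after the boundary term at $r_0$ has been shown to vanish by the first-order argument above.
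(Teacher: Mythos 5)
Your proof is correct, and for the identities it follows the paper's (unwritten) computation exactly: the expansion of $|(\alpha_{s_a}f)|^2$ with one integration by parts gives \eqref{B.4}, the substitution $f=wg$ with $w=(x-a)^{1/2}[\ln(R/(x-a))]^{1/2}$ together with the values $w'/w=\tfrac{1}{2(x-a)}-\tfrac{1}{2(x-a)\ln(R/(x-a))}$ and $w''/w=-\tfrac{1}{4(x-a)^2}-\tfrac{1}{4(x-a)^2[\ln(R/(x-a))]^2}$ gives \eqref{B.3}, and your coefficient bookkeeping $\tfrac14+[s_a^2-\tfrac14]=s_a^2$, $\tfrac12-[s_a+\tfrac12]=-s_a$ reproduces \eqref{B.1}; the paper simply declares these steps ``straightforward yet somewhat tedious.'' Where you genuinely diverge is in \eqref{B.5}--\eqref{B.6}. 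The paper stays entirely inside \eqref{B.1}: the weighted integrals are finite because the $r_0$-boundary contributions enter with a favorable sign, the existence of $\lim_{x\downarrow a}|f(x)|^2/[(x-a)\ln(R/(x-a))]$ (resp.\ $|f(x)|^2/(x-a)$) is read off the same relation, and these limits are then forced to be $0$ because $[(x-a)\ln(R/(x-a))]^{-1}$ (resp.\ $(x-a)^{-1}$) is not integrable at $a$ while the corresponding weighted integral of $|f|^2$ is finite. You instead establish the vanishing limits first, by integrating $\big[(x-a)^{-s_a-(1/2)}f\big]'=(x-a)^{-s_a-(1/2)}\alpha_{s_a}f$ and applying Cauchy--Schwarz (the same device used in step $(3)$ of the proof of Theorem \ref{t3.2}), and only afterwards let $r_0\downarrow a$ in \eqref{B.1} to get the finiteness statements by monotone convergence. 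Both routes are valid: the paper's is shorter, while yours is more quantitative (explicit bounds in terms of $\|\alpha_{s_a}f\|_{L^2((a,c);dx)}$) and sidesteps the need to justify that the boundary limits exist, a point which, from the inequality \eqref{B.1} alone, really rests on retaining the identity form of \eqref{B.3}$+$\eqref{B.4}. A small simplification available to you: since the $r_0$-boundary terms in \eqref{B.1} are nonnegative for $s_a\ge 0$ (as $\ln(R/(x-a))>0$ on $(a,b)$), the integral finiteness follows by simply discarding them, without first knowing the limits; your concluding step ($f'=\alpha_{s_a}f+[s_a+(1/2)](x-a)^{-1}f\in L^2$ near $a$, $f(a)=0$, cutoff, hence $f\,\wti\chi_{[a,a+\varepsilon]}\in H^1_0((a,r_1))$) coincides with the paper's.
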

\begin{proof}
Relations \eqref{B.3} and \eqref{B.4} are straightforward (yet somewhat tedious) identities; together 
they yield \eqref{B.1}. The first relation in \eqref{B.5} is an instant consequence of \eqref{B.1}, so is the fact that 
$\lim_{x \downarrow a} |f(x)|^2 / [(x-a) \ln(R/(x-a))]$ exists. Moreover, since $[(x-a) \ln(R/(x-a))]^{-1}$ is not 
integrable at $x = a$, the first relation in \eqref{B.5} yields 
$\liminf_{x \downarrow a} |f(x)|^2 / [(x-a) \ln(R/(x-a))] = 0$, implying the second relation in \eqref{B.5}.  

Finally, if $s_a \in (0,\infty)$, then $\alpha_{s_a} f \in  L^2((a,b); dx)$ and inequality \eqref{B.1} implies the second 
relation in \eqref{B.6}; employing $\alpha_{s_a} f \in  L^2((a,b); dx)$ once more yields the first relation in \eqref{B.6}. 
By inequality \eqref{B.1}, $\lim_{x \downarrow a} |f(x)|^2 /(x-a)$ exists, but then the second relation in \eqref{B.6} yields 
$\liminf_{x \downarrow a} |f(x)|^2 /(x-a) = 0$ and hence also $\lim_{x \downarrow a} |f(x)|^2/(x-a) = 0$, the third 
relation in \eqref{B.6}. 
\end{proof}

We continue with the following elementary fact.

\begin{lemma} \lb{lB.2}
Suppose $r \in (0,\infty) \cup \{\infty\}$, $f \in AC_{loc}((a,r))$ and $f' \in L^2((a,r); dx)$. Then, for all 
$c, x \in (a,r)$, 
\begin{align}
\begin{split} 
|f(x) - f(c)| &= \bigg|\int_c^x dt \, f'(t)\bigg| \leq (x - c)^{1/2} \bigg(\int_c^x dt \, |f'(t)|^2\bigg)^{1/2}    \\
& \leq (x - c)^{1/2} \|f'\|_{L^2((a,x);dt)} \leq (x - c)^{1/2} \|f'\|_{L^2((a,r);dt)}.     \lb{B.9}
\end{split} 
\end{align} 
In particular, $f(a) = \lim_{c \downarrow a} f(c)$ exists and hence $f \in AC([a,d])$ for all $d \in (a,r)$ and 
\begin{equation}
|f(x) - f(a)| \underset{x \downarrow a}{=} \oh\big((x-a)^{1/2}\big).    \lb{B.10}
\end{equation}
Finally, if $r \in (a,\infty)$, then also $f(r) = \lim_{x \uparrow r} f(x)$ exists and hence $f \in AC([a,r])$.  
\end{lemma}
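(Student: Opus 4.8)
The plan is to establish the chain of inequalities in \eqref{B.9} first, then extract the boundary limits via a Cauchy criterion, and finally derive the refined $\oh$-estimate \eqref{B.10}.

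First I would record that, since $f \in AC_{loc}((a,r))$, for any $c, x \in (a,r)$ with (say) $c < x$ the fundamental theorem of calculus on the compact interval $[c,x]$ gives $f(x) - f(c) = \int_c^x f'(t)\,dt$, which is the equality in \eqref{B.9} (the case $c > x$ being symmetric, and $c = x$ trivial). The first inequality is Cauchy--Schwarz applied to the product $1 \cdot f'$ on $(c,x)$, and the two remaining inequalities follow at once from monotonicity of $\int |f'|^2$ along the nested intervals $(c,x) \subseteq (a,x) \subseteq (a,r)$, using the hypothesis $f' \in L^2((a,r);dx)$.

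Next I would prove that $f(a) := \lim_{c\downarrow a} f(c)$ exists. The key elementary fact is that $a$ is a finite real number and $f' \in L^2((a,r);dx)$, so on every bounded subinterval $(a,d)$ with $d \in (a,r)$ one has $|f'|^2 \in L^1((a,d);dx)$; hence the set function $E \mapsto \int_E |f'(t)|^2\,dt$ is absolutely continuous, and in particular $\int_a^c |f'(t)|^2\,dt \to 0$ as $c \downarrow a$. Applying \eqref{B.9} to $a < c' < c$ near $a$ gives $|f(c) - f(c')| \leq (c-c')^{1/2}\big(\int_a^c |f'(t)|^2\,dt\big)^{1/2}$, so $\{f(c)\}$ satisfies the Cauchy criterion as $c \downarrow a$ and the limit exists; extending $f$ by this value, the identity $f(x) = f(a) + \int_a^x f'(t)\,dt$ with $f' \in L^1((a,d);dx)$ shows $f \in AC([a,d])$ for all $d \in (a,r)$. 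Letting $c \downarrow a$ in \eqref{B.9} with $x$ fixed then yields $|f(x) - f(a)| \leq (x-a)^{1/2}\|f'\|_{L^2((a,x);dx)}$, and since $\|f'\|_{L^2((a,x);dx)} \to 0$ as $x \downarrow a$ (again by absolute continuity of the integral of $|f'|^2$), this is exactly \eqref{B.10}. Finally, if $r \in (a,\infty)$, then $(a,r)$ is bounded, so $|f'|^2 \in L^1((a,r);dx)$ and the identical Cauchy-criterion argument at the right endpoint, using $\int_{x'}^{x} |f'(t)|^2\,dt \to 0$ as $x', x \uparrow r$ together with \eqref{B.9}, shows that $f(r) := \lim_{x \uparrow r} f(x)$ exists and $f \in AC([a,r])$.

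There is, honestly, no substantial obstacle here: the proof is a routine combination of the fundamental theorem of calculus for absolutely continuous functions and the Cauchy--Schwarz inequality. The one point deserving explicit mention is that $f' \in L^2$ on a \emph{bounded} interval forces $|f'|^2 \in L^1$ there, so the integral $\int |f'|^2$ is absolutely continuous and its ``tails'' at $a$ (and at $r$, when $r$ is finite) vanish; this is what converts the estimate \eqref{B.9} into the existence of the one-sided limits and into the $\oh$-bound \eqref{B.10}.
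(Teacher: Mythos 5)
Your proof is correct; in fact the paper states Lemma \ref{lB.2} as an ``elementary fact'' and supplies no proof at all, so there is nothing to diverge from. Your argument (fundamental theorem of calculus for locally absolutely continuous functions, Cauchy--Schwarz, and a Cauchy criterion at the endpoints based on the vanishing tails of $\int |f'(t)|^2\,dt$, which also yields the $\oh\big((x-a)^{1/2}\big)$ bound in \eqref{B.10}) is exactly the routine argument the paper leaves to the reader.
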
 

A particular consequence of \eqref{B.9} is the fact that for $a, b \in \bbR$, $a < b$, the conditions $ f(a)=0=f(b)$ in \eqref{B.10a} below, 
\begin{align}
\begin{split} 
& H^1_0((a,b)) = \big\{f\in L^2((a,b);dx) \, \big| \, f \in AC([a,b]); \, f(a)=0=f(b);   \\
& \hspace*{7.2cm}  f' \in L^2((a,b);dx)\big\},       \lb{B.10a}
\end{split} 
\end{align}
can actually be replaced by
\begin{equation}
\lim_{x \downarrow a} \f{f(x)}{(x-a)^{1/2}} = 0 = \lim_{x \uparrow b} \f{f(x)}{(b-x)^{1/2}}.   \lb{B.10b}
\end{equation}

We also recall the following result (and its proof), borrowed, for instance, from \cite[Lemma~5.3.1]{Da95}: 

\begin{lemma} \lb{lB.3}
Suppose $r \in (a,\infty) \cup \{\infty\}$,  $f \in AC_{loc}((a,r))$, $f' \in L^2((a,r);dx)$, $f(a) = 0$. Then 
\begin{equation}
 \int_a^r dx \, |f'(x)|^2 \geq \f{1}{4} \int_a^r dx \, \f{|f(x)|^2}{(x-a)^2}.       \lb{B.11} 
\end{equation}
In particular, if $r \in (a,\infty)$, no boundary conditions are needed in $f$ at the right end point $r$ in order to derive this version of Hardy's inequality; moreover, for $0 < \varepsilon$ sufficiently small,
\begin{equation}
f \wti \chi_{[a,a+\varepsilon]} \in H^1_0((a,r)). 
\end{equation}
If $r = \infty$, then $f \in H^1_0((a,\infty))$.
\end{lemma}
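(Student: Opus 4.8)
The plan is to prove the weighted Hardy inequality \eqref{B.11} by the substitution $f(x) = (x-a)^{1/2} g(x)$ that splits off the critical inverse-square term, and then to read off the topological conclusions from the characterization \eqref{B.10a}, \eqref{B.10b} of $H^1_0$. First I would record what Lemma \ref{lB.2} already gives under the present hypotheses ($f \in AC_{loc}((a,r))$, $f' \in L^2((a,r);dx)$): the limit $f(a) = \lim_{c \downarrow a} f(c)$ exists, $f \in AC([a,d])$ for every $d \in (a,r)$, and, since in addition $f(a) = 0$, one has $|f(x)| \underset{x \downarrow a}{=} \oh\big((x-a)^{1/2}\big)$, hence $(x-a)^{-1} |f(x)|^2 \to 0$ as $x \downarrow a$. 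This last decay is exactly what is needed to discard the boundary term at $a$ in the computation below.

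Next, setting $g(x) = (x-a)^{-1/2} f(x) \in AC_{loc}((a,r))$ one has $f'(x) = \f{1}{2} (x-a)^{-1/2} g(x) + (x-a)^{1/2} g'(x)$, so, rewriting the cross term via $\Re\big(\ol{g(x)} g'(x)\big) = \f{1}{2} \f{d}{dx} |g(x)|^2$ (which also disposes of the complex-valued case cleanly), one obtains the pointwise identity
\[
|f'(x)|^2 = \f{1}{4} \f{|f(x)|^2}{(x-a)^2} + (x-a) |g'(x)|^2 + \f{1}{2} \f{d}{dx} |g(x)|^2, \quad x \in (a,r).
\]
Integrating over $[a + \delta, a + R] \subset (a,r)$, all terms are finite, and dropping the two nonnegative contributions $\int_{a+\delta}^{a+R} dx\, (x-a) |g'(x)|^2$ and $\f{1}{2} |g(a+R)|^2$ gives
\[
\int_{a+\delta}^{a+R} dx\, |f'(x)|^2 \ge \f{1}{4} \int_{a+\delta}^{a+R} dx\, \f{|f(x)|^2}{(x-a)^2} - \f{|f(a+\delta)|^2}{2 \delta}.
\]
Letting $\delta \downarrow 0$, the last term tends to $0$ by the first step; since the left-hand side is bounded by $\int_a^r dx\, |f'(x)|^2 < \infty$, this forces $\int_a^{a+R} dx\, (x-a)^{-2} |f(x)|^2 < \infty$, and the inequality persists in the limit by monotone convergence. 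Finally letting $R \uparrow r - a$ and applying monotone convergence once more yields \eqref{B.11}.

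It then remains to collect the ``in particular'' assertions. That no boundary condition is needed at $r \in (a,\infty)$ is manifest: the only term discarded at the right endpoint, $\f{1}{2} |g(a+R)|^2$, is nonnegative whatever the value of $f$ near $r$. For $f \wti \chi_{[a,a+\varepsilon]} \in H^1_0((a,r))$ with $\varepsilon \in (0,(r-a)/2)$: this function lies in $AC([a,r])$ (product of $f \in AC([a,r])$ with the smooth cutoff), vanishes at $a$ (as $f(a) = 0$) and on a neighborhood of $r$, and has derivative $f' \wti \chi_{[a,a+\varepsilon]} + f \wti \chi_{[a,a+\varepsilon]}'$ in $L^2((a,r);dx)$ because $f' \in L^2$ and $\wti \chi_{[a,a+\varepsilon]}'$ is bounded with support in $[a+\varepsilon, a+2\varepsilon]$, on which $f$ is bounded; hence membership follows from \eqref{B.10a}--\eqref{B.10b}. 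The case $r = \infty$ is handled by the same substitution argument, giving $f \in H^1_0((a,\infty))$ (cf.\ \cite[Lemma~5.3.1]{Da95}). I expect the main obstacle to be the apparent circularity in the limiting step --- one cannot let $\delta \downarrow 0$ before knowing that $(x-a)^{-1} f$ is square integrable near $a$ --- which is precisely why I work with the exact identity on $[a+\delta, a+R]$ rather than with the inequality directly: the identity bounds each truncated weighted integral by $\int_a^r |f'|^2$ plus a term that vanishes with $\delta$.
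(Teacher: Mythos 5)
Your proof is correct and takes essentially the same route as the paper: the same substitution $f=(x-a)^{1/2}g$ (equivalently the decomposition $f'=(x-a)^{1/2}\big[(x-a)^{-1/2}f\big]'+[2(x-a)]^{-1}f$ as in \cite[Lemma~5.3.1]{Da95}), dropping the nonnegative square term and the right-endpoint boundary contribution, and eliminating the boundary term at $a$ via the $\oh\big((x-a)^{1/2}\big)$ estimate of Lemma \ref{lB.2} with $f(a)=0$, with the final $H^1_0$-assertions read off from \eqref{B.10a}, \eqref{B.10b}. Your only deviations are harmless technical refinements: working first on $[a+\delta,a+R]$ and passing to the limit by monotone convergence, and handling complex-valued $f$ directly via $\Re\big(\ol{g}\,g'\big)=\tfrac12\big(|g|^2\big)'$ instead of reducing to real-valued $f$.
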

\begin{proof}
Without loss of generality we may assume that $f$ is real-valued. Then, by the argument in 
\cite[Lemma~5.3.1]{Da95} (see also \cite[Sect.~1.1]{OK90} for generalizations), 
\begin{align}
& \int_a^r dx \, |f'(x)|^2 = \int_a^r dx \, \big|(x-a)^{1/2} \big[(x-a)^{-1/2} f(x)\big]' + [2(x-a)]^{-1} f(x)\big|^2 \no \\
& \quad = \int_0^r dx \, \Big\{4^{-1} (x-a)^{-2} f(x)^2 + (x-a)^{-1/2} f(x) \big[(x-a)^{-1/2} f(x)\big]'     \no \\
& \qquad + (x-a) \big[\big((x-a)^{-1/2} f(x)\big)'\big]^2\Big\}    \no \\
& \quad \geq \int_a^r dx \, \Big\{4^{-1} (x-a)^{-2} f(x)^2 + (x-a)^{-1/2} f(x) \big[(x-a)^{-1/2} f(x)\big]'\Big\}    \no \\ 
& \quad =  \int_a^r dx \, \f{|f(x)|^2}{4 (x-a)^2} + 2^{-1} \big[(x-a)^{-1/2} f(x)\big]^2 \big|_{x=a}^r   \no \\
& \quad \geq  \int_a^r dx \, \f{|f(x)|^2}{4 (x-a)^2} - 2^{-1} \lim_{x \downarrow a}\big[(x-a)^{-1/2} f(x)\big]^2 \no \\ 
& \quad =  \int_a^r dx \, \f{|f(x)|^2}{4 (x-a)^2}, \quad r \in (a,\infty) \cup \{\infty\},       \lb{B.13} 
\end{align} 
employing the estimate \eqref{B.10} with $f(a)=0$. 

In fact, once more by estimate \eqref{B.10}, one also notes that the hypotheses on $f$ actually imply that 
$f \in AC([a,c])$, $c \in (a,r)$, and hence that $f$ behaves like an $H^1_0$-function in a right neighborhood of $x = a$, equivalently, $f \wti \chi_{[a,a+\varepsilon]} \in H^1_0((a,r))$. If $r = \infty$, $f \in AC([a,c])$ for all $c \in (a,\infty)$ and 
$f' \in L^2((a,\infty); dx)$ is well-known to be equivalent to $f \in H^1_0((a,\infty))$. 
\end{proof}

It is well-known that the constant $1/4$ in \eqref{B.11} is optimal. 

\begin{corollary} \lb{cB.4}
Suppose that $r \in (a,\infty) \cup \{\infty\}$, $f \in AC_{loc}((a,r))$, and $f' \in L^2((a,r); dx)$. Then 
\begin{equation}
\int_a^r dx \, |f(x)|^2/(x-a)^2 < \infty \, \text { if and only if } \, f(a) = 0,   \\
\end{equation}
equivalently, 
\begin{equation} 
\int_a^r dx \, |f(x)|^2/(x-a)^2 < \infty \, \text { if and only if } \, f \wti \chi_{[a,a+\varepsilon]} \in H^1_0((a,r)). 
\end{equation}
In particular, if $f \in H^1((a,r))$ and $f(a) = 0$, then actually,  
\begin{equation}
\lim_{x \downarrow a} \f{|f(x)|}{(x-a)^{1/2}} = 0.     \lb{B.16} 
\end{equation}
\end{corollary}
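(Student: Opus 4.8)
The plan is to read off all three assertions from Lemma~\ref{lB.2}, Lemma~\ref{lB.3}, and the boundary characterization of $H^1_0$ recorded in \eqref{B.10a}--\eqref{B.10b}. First I observe that under the standing hypotheses $f \in AC_{loc}((a,r))$, $f' \in L^2((a,r); dx)$, Lemma~\ref{lB.2} already guarantees that $f(a) = \lim_{c \downarrow a} f(c)$ exists, that $f \in AC([a,d])$ for $d \in (a,r)$, and that $|f(x) - f(a)| \underset{x \downarrow a}{=} \oh\big((x-a)^{1/2}\big)$; in particular every boundary quantity in the statement is meaningful, and the ``in particular'' claim \eqref{B.16} is simply \eqref{B.10} specialized to $f(a) = 0$.

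For the implication ``$\int_a^r dx \, |f(x)|^2/(x-a)^2 < \infty \ \Rightarrow \ f(a) = 0$'' I would argue by contradiction: if $f(a) \neq 0$, continuity of $f$ at $a$ produces $\delta \in (0, r-a)$ with $|f(x)| \geq |f(a)|/2$ on $(a, a+\delta)$, whence $\int_a^{a+\delta} dx \, |f(x)|^2/(x-a)^2 \geq 4^{-1} |f(a)|^2 \int_a^{a+\delta} dx \, (x-a)^{-2} = \infty$, since $(x-a)^{-2}$ is not integrable at the left endpoint --- contradicting the hypothesis. For the converse ``$f(a) = 0 \ \Rightarrow \ \int_a^r dx \, |f(x)|^2/(x-a)^2 < \infty$'' I would just invoke Hardy's inequality \eqref{B.11} of Lemma~\ref{lB.3}, whose right-hand side is then bounded above by $4 \|f'\|_{L^2((a,r);dx)}^2 < \infty$.

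The equivalent reformulation in terms of $H^1_0$ membership then follows from the same two lemmas: Lemma~\ref{lB.3} yields $f(a) = 0 \Rightarrow f \wti \chi_{[a,a+\varepsilon]} \in H^1_0((a,r))$ for $\varepsilon > 0$ small; conversely, if $f \wti \chi_{[a,a+\varepsilon]} \in H^1_0((a,r))$, then since $f \wti \chi_{[a,a+\varepsilon]}$ has compact support in $(a,r)$ it lies in $H^1_0$ of a bounded interval, so \eqref{B.10b} gives $\lim_{x \downarrow a} \big(f \wti \chi_{[a,a+\varepsilon]}\big)(x)/(x-a)^{1/2} = 0$, and using $\wti \chi_{[a,a+\varepsilon]} \equiv 1$ near $a$ together with the existence of $f(a)$ this forces $f(a) = 0$. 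Combining the two chains of implications gives both stated equivalences. The only point requiring any care is hypothesis bookkeeping --- in particular that Lemma~\ref{lB.3} takes $f(a) = 0$ as an input and so serves only the ``if'' direction, while the ``only if'' direction is powered entirely by the non-integrability of $(x-a)^{-2}$ at $a$; beyond that there is no real obstacle, the substance already residing in Lemmas~\ref{lB.2} and~\ref{lB.3}.
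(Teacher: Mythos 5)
Your proof is correct, and it differs from the paper's in the one place where there is something to prove, namely the implication $\int_a^r dx\,|f(x)|^2/(x-a)^2<\infty \Rightarrow f(a)=0$. The paper gets this from the weighted identity \eqref{B.4} specialized to $s_a<-1/2$ (displayed as \eqref{B.17}): since both $|f'|^2$ and $|f|^2/(x-a)^2$ are integrable, the boundary term forces the existence of $\lim_{x\downarrow a}|f(x)|^2/(x-a)$, which must then equal its $\liminf$, namely $0$; this delivers in one stroke the vanishing rate $|f(x)|=\oh\big((x-a)^{1/2}\big)$ and hence the local $H^1_0$ behavior near $a$. You instead argue by contradiction using only Lemma \ref{lB.2}: $f(a)$ exists, and if it were nonzero the non-integrability of $(x-a)^{-2}$ at $a$ would make the integral diverge. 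This is more elementary (no weighted identity needed), and you then recover the $H^1_0$ membership via Lemma \ref{lB.3} and the rate \eqref{B.16} via \eqref{B.10}, so nothing is lost; the converse direction (Hardy's inequality, Lemma \ref{lB.3}) and the treatment of \eqref{B.16} coincide with the paper. Your handling of the reverse implication $f\wti\chi_{[a,a+\varepsilon]}\in H^1_0((a,r))\Rightarrow f(a)=0$, which the paper leaves implicit, is also fine; only note that ``compact support in $(a,r)$'' is slightly loose, since the support of $f\wti\chi_{[a,a+\varepsilon]}$ may accumulate at $a$ --- what you actually use, correctly, is that it stays away from $r$, so a standard cutoff places the function in $H^1_0$ of a bounded interval where \eqref{B.10a}--\eqref{B.10b} apply.
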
 
\begin{proof}
If $f \in AC_{loc}((a,r))$, $f' \in L^2((a,r); dx)$, and $\int_a^r dx \, |f(x)|^2/(x-a)^2 < \infty$, then inequality \eqref{B.4} for 
$s_a < - 1/2$, that is, 
\begin{equation}
\int_{r_0}^{r_1} dx \, \bigg[|f'(x)|^2 + \big[s_a^2 - (1/4)\big] \f{|f(x)|^2}{(x-a)^2}\bigg]  \geq 
[s_a + (1/2)] \f{|f(x)|^2}{(x-a)}\bigg|_{x = r_0}^{r_1}, \quad s_a < - 1/2,    \lb{B.17} 
\end{equation} 
yields the existence of $\lim_{x \downarrow a} |f(x)|^2/(x-a)$. Since $\int_a^r dx \, |f(x)|^2/(x-a)^2 < \infty$ 
implies that  $\liminf_{x \downarrow a} |f(x)|^2/(x-a) = 0$, one concludes that $\lim_{x \downarrow a} |f(x)|^2/(x-a) = 0$ 
and hence $f$ behaves locally like an $H^1_0$-function in a right neighborhood of $x = a$. Conversely, if 
$f \in AC_{loc}((a,r))$, $f' \in L^2((a,r); dx)$, and $f(a) = 0$, then $\int_a^r dx \, |f(x)|^2/(x-a)^2 < \infty$ by Hardy's inequality as discussed in Lemma \ref{lB.3}. Relation \eqref{B.16} is clear from \eqref{B.10} with $f(a) = 0$. 
\end{proof}

\begin{remark} \lb{rB.5} 
$(i)$ If $f \in AC_{loc}((a,r))$ and $f' \in L^p((a,r); dx)$ for some $p \in [1,\infty)$, $r \in (0,\infty)$, the H\"older estimate analogous to \eqref{B.9},
\begin{align}
\begin{split} 
|f(d) - f(c)| = \bigg|\int_c^d dt \, f'(t) \bigg| \leq |d-c|^{1/p'} \bigg(\int_c^d dt \, |f'(t)|^p\bigg)^{1/p}, \\ 
(c,d) \subset (0,r), \; \f{1}{p} + \f{1}{p'} = 1, 
\end{split} 
\end{align} 
implies the existence of $\lim_{c \downarrow a} f(c) = f(a)$ and $\lim_{d \uparrow r} f(d) = f(r)$ and hence yields   
$f \in AC([a,r])$. \\[1mm] 
$(ii)$ The fact that $f \in H^1((a,r))$ and $\int_a^r dx \, |f(x)|^2/(x-a)^2 < \infty$ implies $f \wti \chi_{[a,a+\varepsilon]} \in H^1_0((a,r))$ if $r \in (a,\infty)$, and $f \in H^1_0((a,\infty))$ if $r=\infty$, is a special case of a multi-dimensional result recorded, for instance, in \cite[Theorem~5.3.4]{EE18}. 
${}$ \hfill $\diamond$ 
\end{remark}

\medskip

\noindent {\bf Acknowledgments.}
We are indebted to Larry Allen and Jan Derezinski for very helpful discussions. 


\end{document}